\newcommand{\commentcontent}[1]{
  \fcolorbox{black}{white}{
    \parbox{0.95\linewidth}{#1}
  }
}
\numberwithin{equation}{section}
\newtheorem{theorem}{Theorem}[section]
\newtheorem{proposition}[theorem]{Proposition}
\newtheorem{corollary}[theorem]{Corollary}
\newtheorem{lemma}[theorem]{Lemma}
\newtheorem{assumption}{Assumption}[section]
\newtheorem{definition}{Definition}[section]
\newcommand {\E}{\mathbb{E}}
\newcommand{\diff}{{\rm d}}
\title{An Inventory System with Two Supply Modes and L\'evy Demand}
	\author[J. L. P\'erez]{Jos\'e-Luis P\'erez$^*$}
		\thanks{$*$\, Department of Probability and Statistics, Centro de Investigaci\'on en Matem\'aticasA.C. Calle Jalisco
		s/n. C.P. 36240, Guanajuato, Mexico. Email: jluis.garmendia@cimat.mx}
	\author[K. Yamazaki]{Kazutoshi Yamazaki$^\dagger$}
\thanks{$\dagger$\, School of 
Mathematics and Physics, The University of Queensland, St Lucia,
Brisbane, QLD 4072, Australia. Email: k.yamazaki@uq.edu.au,  qingyuan.zhang@uq.edu.au}
	\author[Q. Zhang]{Qingyuan Zhang$^\dagger$}
\date{\today}
\begin{document}

\maketitle
\noindent
\textbf{Abstract.} This study considers a continuous-review inventory model for a single item with two replenishment modes. Replenishments may occur continuously at any time with a higher unit cost, or at discrete times governed by Poisson arrivals with a lower cost. From a practical standpoint, the model represents an inventory system with random deal offerings. Demand is modeled by a spectrally positive L\'evy process (i.e., a L\'evy process with only positive jumps), which greatly generalizes existing studies. Replenishment quantities are continuous and backorders are allowed, while lead times, perishability, and lost sales are excluded. Using fluctuation theory for spectrally one-sided L\'evy processes, the optimality of a hybrid barrier policy incorporating both kinds of replenishments is established, and a semi-explicit expression for the associated value function is computed. Numerical analysis is provided to support the optimality result.
\\
		{\noindent  AMS 2020 Subject Classifications: 60G51, 93E20, 90B05. \\
			\textbf{Keywords:} inventory models; price discounts; stochastic control; spectrally one-sided L\'evy process; scale functions}

\section{Introduction}

In modern business environments, suppliers often offer price discounts at random times, with either fixed or variable discount amounts. For example, local supermarkets may run sales or special promotions due to temporary market imbalances---such as excess inventory---or as a result of promotional support from manufacturers. From the firm’s perspective, cost-effective supply chains can be developed by leveraging these periodic discounts through replenishment policies that combine regular and discounted replenishments. As noted in \cite{moinzadeh_discount_1997}, grocery retailers in the U.S. have invested millions of dollars to build warehouses specifically for storing excess inventory purchased at discounted prices. The prevalence of such scenarios is further supported by a motivating industry example presented in \cite{silver_random_1993}.

The scenario above highlights the relevance of inventory control models that incorporate two replenishment modes, a topic that has received considerable attention in the literature. Studies of such models in the context of price fluctuations include \cite{berling_stochastic_2011, hurter_regenerative_1967,  hurter_discount_1968, kalymon_stochastic_1971, moinzadeh_discount_1997}, among others. Of the aforementioned studies, Hurter and Kaminsky \cite{hurter_regenerative_1967, hurter_discount_1968} and Moinzadeh \cite{moinzadeh_discount_1997} focused on the setting where a fixed price discount is offered. Specifically, Hurter and Kaminsky \cite{hurter_regenerative_1967} studied replenishment policies for a system in which inventories can be replenished at either a regular unit price, available at all times, or a discounted unit price that arises randomly and expires immediately. Assuming stochastic demand and a specific replenishment policy, the authors analyzed both the behavior of the inventory system and the inventory-holding costs. This model was subsequently extended in \cite{hurter_discount_1968} to allow the discounted replenishment opportunities to have positive, random durations. In a related direction, Moinzadeh \cite{moinzadeh_discount_1997} analyzed a system in which demand is constant and price discounts are available at the arrival times of a Poisson process. Under specific assumptions on the inventory holding cost, analytical expressions are derived for the optimal policy parameters by solving a system of equations.

Kalymon \cite{kalymon_stochastic_1971} and Berling and Mart\'inez-de-Alb\'eniz \cite{berling_stochastic_2011} investigated replenishment policies in systems with stochastic supply prices. Kalymon \cite{kalymon_stochastic_1971} studied a periodic-review inventory model in which the unit price of supplies in each period follows a Markov process. Under certain conditions on the fixed ordering cost, the optimal policy is shown to be a price-dependent $(s, S)$ policy for both finite and infinite horizon cases. Berling and Mart\'inez-de Alb\'eniz \cite{berling_stochastic_2011} considered a setting closely related to that of \cite{kalymon_stochastic_1971} and extended the analysis to cases in which supply prices follow either a geometric Brownian motion or an Ornstein-Uhlenbeck process.

\subsection{Our model.} 
Motivated by the practical relevance of inventory systems with both regular and discounted replenishment modes, this study examines such a model. Specifically, in the present framework, inventory can be replenished either instantaneously at any time at a higher unit cost or instantaneously at random discrete times at a lower unit cost. Replenishment costs are proportional to the replenishment amount, and there are no fixed costs. Demand is modeled as a spectrally positive L\'evy process, and the random replenishment opportunities follow the arrival times of a Poisson process that is independent of the demand. The aim is to minimize the total cost, which consists of inventory-holding costs and replenishment costs. In view of existing studies, the present setting bears some similarity to that in \cite{moinzadeh_discount_1997}, which also assumes no lead time, perishability, or lost sales, and allows continuous replenishment quantities.

The present model is distinguished from existing inventory models with two replenishment modes by the following features. First, the inventory-holding cost under a replenishment policy is modeled by applying a function $f$ to the replenished inventory process. The function $f$ is assumed to be convex, as in classical inventory models such as that in \cite{benkherouf_optimality_2009, bensoussan_QVI_2005}. The set of functions satisfying this assumption is sufficiently rich and includes many commonly used forms, such as piecewise linear and quadratic functions. 

Second, we allow the demand process to be any spectrally positive L\'evy process, with either finite or infinite activity. This flexibility contrasts with existing models, which typically restrict the demand to follow processes of finite activity, such as a compound Poisson process or a Brownian motion. As noted in \cite{yamazaki_inventory_2017}, empirical studies of asset prices support the use of models involving processes with infinite activity. Since demand is often closely related to price dynamics, it is reasonable to model it using spectrally positive L\'evy processes with infinite activity.

Furthermore, the motivation for restricting the opportunities for discounted replenishments to independent Poisson arrival times is two-fold. To the best of our knowledge, Poisson arrival times are the only type of random discrete times for which semi-explicit expressions for the value function can be computed, owing to the memoryless property of the exponential inter-arrival times in a Poisson process. Moreover, as observed in \cite{leung_analytic_2014}, the Poisson inter-arrival model, where replenishments occur at the arrival times of an independent Poisson process, could potentially be used to approximate the constant inter-arrival model, in which replenishments occur at deterministic, uniformly spaced times. Hence, insights gained from analyzing the Poisson inter-arrival model may contribute to the understanding of the constant inter-arrival model.

\subsection{Contributions and method.}
In this study, we prove the optimality of a hybrid barrier replenishment policy. Given two barrier levels $a < b$, the policy maintains the inventory above $a$ via regular replenishment and, at each discounted replenishment opportunity, raises the inventory up to $b$ whenever it falls below $b$; see Figure \ref{Fig: processes} for an illustration. We explicitly identify a pair of optimal barriers as the solution to a pair of equations with semi-explicit expressions, and we obtain an analytic expression for the corresponding value function.

\begin{figure}[htbp]
\centering
\includegraphics[width=0.7\textwidth]{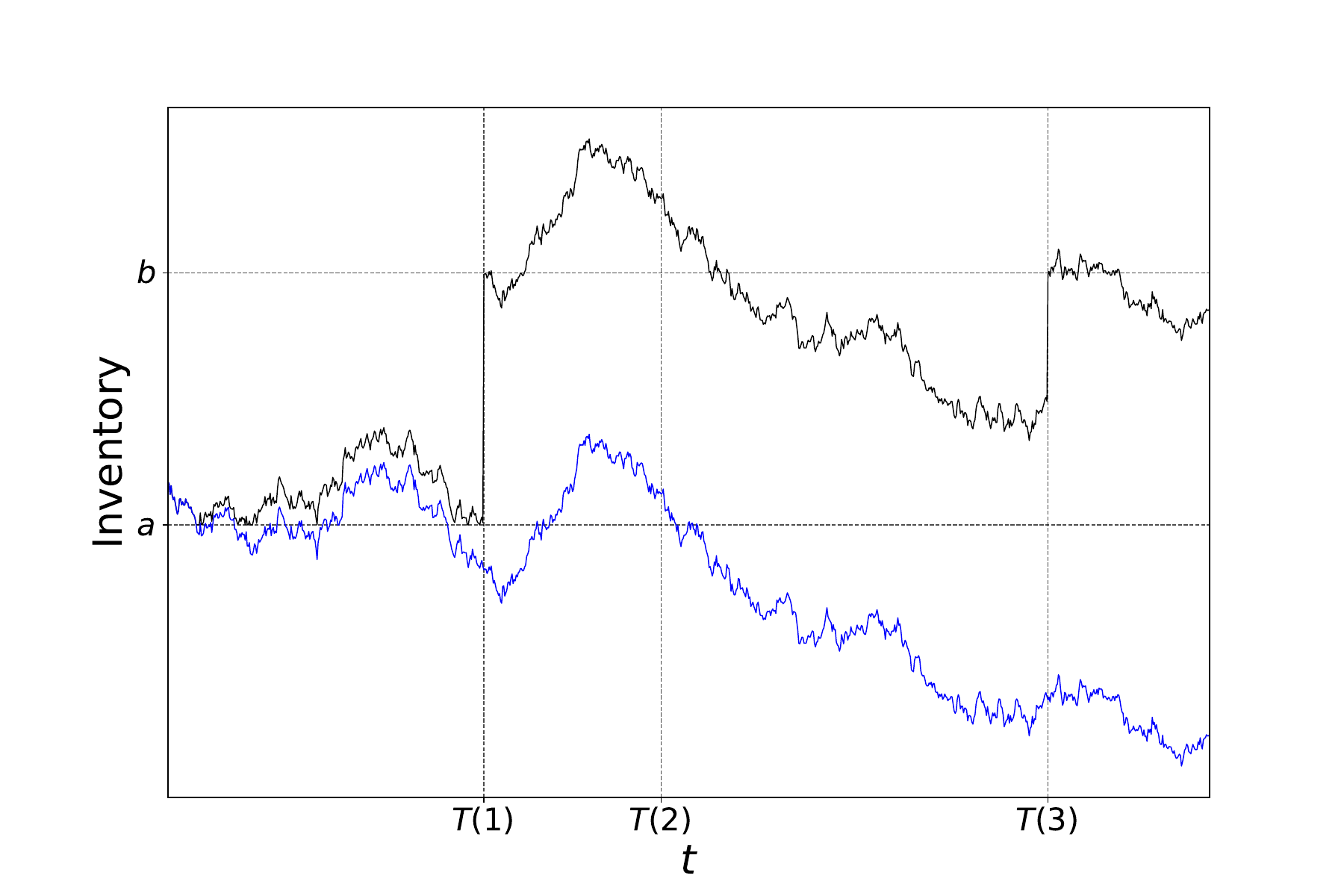}
\caption{\small Sample path of a standard Brownian motion (blue) and the replenished process (black). Discounted replenishment opportunities (denoted by $T(1)$, $T(2)$, $T(3)$) are shown as vertical dashed lines. At $T(1)$ and $T(3)$, the inventory level is below $b$, triggering replenishment. At $T(2)$ the inventory level exceeds $b$, so no replenishment occurs.}
\label{Fig: processes}
\end{figure}

Our analysis adopts the guess-and-verify approach, which can be applied effectively in this setting thanks to recent advances in the fluctuation theory of spectrally one-sided L\'evy processes. Specifically, our analysis of the model proceeds as follows:
\begin{enumerate}
    \item The present values of inventory-holding and replenishment costs under a hybrid barrier replenishment policy are computed using scale functions. The sum of these quantities yields the total cost of a hybrid barrier replenishment policy.   
    \item Using the analytical expression for the total cost, a pair of candidate barriers is selected and subsequently proven to exist. 
    \item The value function is shown to possess key properties such as smoothness and convexity. These properties, together with an application of a verification lemma, are used to establish rigorously the optimality of a candidate policy within the set of admissible policies.
    \item The optimality is illustrated numerically through an example and sensitivity analysis is performed.
\end{enumerate}

Our analysis overcomes several challenges not addressed in the existing literature. First, we derive the potential measure of the inventory process under a hybrid barrier replenishment policy, which corresponds to a spectrally negative L\'evy process subject to continuous reflection and reflection at Poisson arrival times (see Figure \ref{Fig: processes}). Second, we study the interaction between the two replenishment modes, which constitutes an interesting aspect of the model. Furthermore, we present an optimal solution, which is either (i) a hybrid barrier replenishment policy with finite barriers, or (ii) a pure discounted replenishment policy in which regular replenishment does not occur.

\subsection{Literature review}
Beyond inventory models with two replenishment modes, the inventory-holding cost in the inventory management literature is typically modeled as a convex function of the replenished inventory level, accumulated over time. The replenishment cost is often assumed to be proportional to the replenishment amount, with or without fixed costs incurred at each replenishment. Under suitable conditions, studies of inventory models with a single replenishment mode have established that the optimal policy takes the form of an $(s,S)$ policy in the impulse-control setting with fixed costs, and a barrier policy in the absence of fixed costs. Specifically, the compound Poisson model was analyzed by \cite{beckmann_compoundpoisson_1961}. The combined Brownian motion and compound Poisson model was subsequently investigated by \cite{benkherouf_optimality_2009, bensoussan_QVI_2005}, and later generalized to the spectrally positive L\'evy setting in \cite{yamazaki_inventory_2017}. Other versions of inventory models driven by L\'evy processes include \cite{yam_lostsales_2024}, among others.

In the context of inventory systems, the L\'evy process, a continuous-time analogue of the random walk, provides a natural model for aggregate demand. Spectrally positive L\'evy processes are particularly suitable, as they admit only positive jumps. For such processes, there exists a family of functions with semi-explicit expressions, known as scale functions, that yield compact formulations of key identities such as two-sided exit identities and resolvents. Although the application of scale functions in inventory models remains relatively underexplored, studies such as \cite{perez_optimal_2020, yamazaki_inventory_2017} have demonstrated their effectiveness in analyzing optimal replenishment policies. In these problems, scale functions allow for the direct computation of expected costs. The analytical properties of scale functions further make it possible to establish key properties of the total cost function, such as smoothness and convexity. Moreover, for certain spectrally one-sided L\'evy processes, explicit expressions for scale functions are available. For further reference on L\'evy processes and scale functions, see \cite{kuznetsov_theory_2013, kyprianou_fluctuations_2014}.

Our methodology is motivated by recent studies of L\'evy processes observed at Poisson arrival times. In \cite{ivanovs_exit_2016, landriault_potential_2018}, it was shown that the exit identities and potential measures of spectrally one-sided L\'evy processes under Poissonian observation can be expressed in terms of scale functions. Continuing in this line of research, L\'evy processes subject to interventions at Poisson arrival times have been investigated. In particular, the study in \cite{avram_parisian_2018} used scale functions to derive fluctuation identities for spectrally one-sided L\'evy processes reflected at Poisson arrival times. These fluctuation identities enabled the study of L\'evy models for stochastic control problems where controls can occur only at random discrete times. Such studies include de Finetti’s optimal dividend problem with periodic dividend payouts, as in \cite{noba_dividend_2018, zhao_dividend_2017}, which extend the classical Brownian motion model of \cite{avanzi_dividend_2014}.

Returning to the present model, we note that two related problems have been studied in the context of de Finetti’s optimal dividend problem, while another has been examined in the framework of singular control. In the context of dividend problems, the studies in \cite{avanzi_hybrid_2016, perez_dual_2020} examined models where the surplus follows a drifted Brownian motion and a spectrally positive L\'evy process, respectively, with dividends payable either continuously at any time at a higher per-dollar transaction cost, or at independent Poisson arrival times at a lower cost. Both problems aim to maximize the expected net present value of dividends distributed until ruin, which is the first time the surplus falls below zero. A fundamental difference between our problem and those in \cite{avanzi_hybrid_2016, perez_dual_2020} is that ours additionally incorporates an inventory-holding cost, which considerably complicates the analysis of the optimal policy. In another direction, the study in \cite{yamazaki_optimal_2025} considered a variant of our problem in which, at Poisson arrival times, the state process is decreased rather than increased.

\subsection{Outline}

The remainder of this paper proceeds as follows: Section 2 presents the mathematical formulation of our model. Section 3 introduces the scale functions and computes the costs under a hybrid barrier replenishment policy. Section 4 outlines the approach for selecting the barriers. Section 5 establishes, under certain conditions, the existence of a pair of candidate barriers, and Section 6 proves the optimality of the corresponding policy via a verification lemma. Finally, Section 7 concludes this study via numerical examples.

\section{Problem setting}
In the present single-item inventory system, the aggregate demand is modeled by a spectrally positive L\'evy process $D = (D(t); t \geq 0)$ with $D(0) = 0$. For completeness, we assume that $D$ is defined on a complete probability space $(\Omega, \mathcal{F}, \mathbb{P})$. Given an initial inventory level $x \in \mathbb{R}$, the inventory process $X = (X(t); t \geq 0)$ is a spectrally negative L\'evy process defined by
\[X(t) \coloneqq x - D(t), \quad t \geq 0.\]
To denote the law of $X$ conditional on the initial inventory level $x \in \mathbb{R}$, we adopt the standard notation $\mathbb{P}_x$, and use $\mathbb{E}_x$ for the corresponding expectation operator.

The sequence of random opportunities to replenish at the discounted price is denoted by $\mathcal{T} = (T(i); i \geq 0)$, where each $T(i)$ is an arrival time of a Poisson process $N = (N(t); t \geq 0)$ with intensity $\lambda > 0$, independent of the demand process $D$. Let $\mathbb{F} \coloneqq (\mathcal{F}(t); t \geq 0)$ denote the $\mathbb{P}$-enlargement of the filtration generated by the two-dimensional process $(D, N)$, representing the information from both the demand process and the random discounted replenishment opportunities.
A \emph{replenishment policy} $\pi = \{(R_c^\pi(t), R_p^\pi(t)); t \geq 0\}$ consists of a pair of processes such that, for any $t > 0$, the cumulative regular and discounted replenishments up to time $t$ are denoted by $R_c^\pi(t)$ and $R_p^\pi(t)$, respectively. Mathematically, both $R_c^\pi$ and $R_p^\pi$ are non-decreasing, c\`adl\`ag, and $\mathbb{F}$-adapted, with $R_p^\pi(0-) = R_c^\pi(0-) = 0$. To reflect the restriction on discounted replenishments, we assume that $R_p^\pi$ may be increased only at $t \in \mathcal{T}$, whereas $R_c^\pi$ may be increased at any $t \in \mathbb{R}$. Under $\pi$, the replenished inventory process $Y^\pi = (Y^\pi(t); t \geq 0)$ is given by
\[Y^\pi(t) := X(t) + R_c^\pi(t) + R_p^\pi(t), \quad t \geq 0.\]

For a positive discount factor $q$, an initial inventory level $x \in \mathbb{R}$, constants $K_c, K_p \in \mathbb{R}$, and $f: \mathbb{R} \to \mathbb{R}$, we define the net present value (NPV) of the total cost under $\pi$ as
\begin{equation}\label{Eq: total cost general form}
    v^\pi(x) \coloneqq \mathbb{E}_x \left[\int^\infty_0 e^{-qt} f(Y^\pi(t))\, \diff t + \int_{[0, \infty)} e^{-qt}(K_p \, \diff R_p^\pi(t) + K_c \, \diff R_c^\pi(t))\right].
\end{equation}
The value $f(x)$ represents the inventory-holding cost when $x > 0$, and the backorder cost when $x \leq 0$. The parameters $K_c$ and $K_p$ represent the unit costs, incurred by regular and discounted replenishments, respectively. We assume that
\begin{align}
    K_c > K_p. \label{Eq: C}
\end{align}
In the context of discount offerings, $K_c \leq K_p$ would imply that the discounted price is higher than the regular price, a case we can safely exclude from consideration. From a mathematical standpoint, dropping this assumption reduces the model to an uninteresting case, in which it is always optimal to incur continuous-time replenishments $R_c^\pi$ and disregard discrete-time replenishments $R_p^\pi$.

In \eqref{Eq: total cost general form}, the inventory-holding cost is modeled by the function $f$. For this function, we impose the following assumption, which is standard in the literature. 
\begin{assumption}\label{asm: f convexity}
    The function $f: \mathbb{R} \to \mathbb{R}$ is convex, piecewise continuously differentiable, and grows at most polynomially (i.e., there exist $a, M > 0$ and $N \in \mathbb{N}$ such that $|f(x)| \leq a |x|^N$ for $x \in (-\infty, -M) \cup (M, \infty)$) .
\end{assumption}

Define an affine perturbation of $f$ given by
\begin{equation}\label{Eq: tilde f}
    \tilde{f}(x) \coloneqq f(x) - \left(\frac{\lambda}{\lambda + q} K_p - K_c\right)(\lambda + q) x = f(x) + qK_c x + \lambda( K_c- K_p)x, \quad x\in \mathbb{R}.
\end{equation}
We impose the following assumption on the slope of $f$. 

\begin{assumption}\label{asm: f slope}
    The function $f: \mathbb{R} \to \mathbb{R}$ satisfies:
    \begin{itemize}
        \item[(1)] $\bar{a} \coloneqq \inf\{a \in \mathbb{R}: \tilde{f}'(a) \geq 0\}$ is finite.
        \item[(2)] $\bar{\bar{a}} \coloneqq \inf\{a \in \mathbb{R}: f'(a) + qK_p > 0\}$ is finite.
    \end{itemize}
\end{assumption}
We later show that Assumption \ref{asm: f slope} suffices to guarantee the existence of an optimal hybrid barrier replenishment policy. When Assumption \ref{asm: f slope}(1) is not satisfied, an optimal policy may still be identified; discussion of these cases is deferred to Section \ref{Sect: asm on f}. Assumption \ref{asm: f slope}(2) is typically imposed to exclude situations in which it is optimal to make arbitrarily large replenishments at the first exponential time; see \cite[Remark 1]{noba_singular_2023}.

Here and in the remainder of this paper, we use $f'(x)$ to denote the right-hand derivative of $f$ at $x$, whenever the standard derivative does not exist. Moreover, we use $x+$ and $x-$ to denote the right- and left-hand limits at $x \in \mathbb{R}$, respectively. The convexity of a function is always understood in the weak sense.

Under Assumptions \ref{asm: f slope}, we observe that
\[\bar{a} < \bar{\bar{a}},\]
as implied by the inequality $\tilde{f}'(x) = f'(x) + qK_c + \lambda(K_c - K_p) > f'(x) + qK_c > f'(x) + qK_p$.

Regarding the inventory process $X$, we make the following assumptions. First, recall that the \emph{Laplace exponent} of the spectrally negative L\'evy process $X$ takes the following form
\begin{align}\label{Eq: laplace exponent}
    \psi(s) \coloneqq \log\mathbb{E} [e^{s X(1)}] = \gamma s + \frac{\sigma^2}{2} s^2 + \int_{(-\infty, 0)} (e^{sz} - 1 - sz 1_{\{z > -1\}}) \,\mu(\diff z), \quad s \geq 0,
\end{align}
for some $\gamma \in \mathbb{R}$, $\sigma \geq 0$ and a \textit{L\'evy measure} $\mu$ on $(-\infty, 0)$ satisfying $\int_{(-\infty, 0)} (1 \wedge z^2) \, \mu(\diff z) < \infty$. When $X$ is of bounded variation, which occurs if and only if $\sigma = 0$ and $\int_{(-\infty, 0)} (1 \wedge |z|) \,\mu(\diff z) < \infty$, the Laplace exponent of $X$ simplifies to
\begin{equation}\label{Eq: drift finite variation}
    \psi(s) = \delta s + \int_{(-\infty, 0)} (e^{sz} - 1) \, \mu(\diff z), \quad s \geq 0, \quad \text{where} \quad \delta \coloneqq \gamma - \int_{(-1, 0)} z\, \mu(\diff z) \in \mathbb{R}. 
\end{equation}
We assume that $X$ is not the negative of a subordinator, which implies that $\delta > 0$ when $\sigma = 0$.

Additionally, we impose the following condition on the tail of the L\'evy measure. In conjunction with Assumption \ref{asm: f convexity}, it guarantees that $\mathbb{E}(X(1)) = \psi'(0+) \in (-\infty, \infty)$ and $\mathbb{E}_x[ \int^\infty_{0}e^{-qs} |f(X(s))|\, \diff s] < \infty$, for all $x \in \mathbb{R}$.
\begin{assumption} \label{asm: on X}
    There exists $\theta > 0$ such that $\int_{(-\infty, -1]} \exp(\theta |y|)\, \mu(\diff y) < \infty$.
\end{assumption}

Under Assumptions \ref{asm: f convexity} and \ref{asm: on X}, a replenishment policy $\pi$ is said to be \textit{admissible} if it satisfies the following integrability conditions: (1) $\mathbb{E}_x[\int^\infty_{0}e^{-qt} |f(Y^\pi(t))|\, \diff t] < \infty$, and (2) $\mathbb{E}_x[\int_{[0, \infty)}e^{-qt} (\diff R_p^\pi(t) + \diff R_c^\pi(t))] < \infty$, so that \eqref{Eq: total cost general form} is well-defined.  We use $\Pi$ to denote the set of admissible replenishment policies.

The objective of this problem is to derive the value function 
\begin{equation}\label{Eq: value}
    v(x) \coloneqq \inf_{\pi \in \Pi} v^\pi(x), \quad x \in \mathbb{R},
\end{equation}
and an associated optimal policy $\pi^* \in \Pi$ such that $v(x) = v^{\pi^*}(x)$, if such a policy exists.

\section{Hybrid barrier replenishment policies}
In this section, we introduce the class of hybrid barrier replenishment policies
\[\pi^{a,b} = \{(R_c^{a, b}(t), R_p^{a, b}(t)); t \geq 0\}, \quad a < b,
\]
within the framework of our model.

Fix two barriers $a < b$. Under $\pi^{a, b}$, the inventory process is replenished continuously at the lower barrier $a$, and at each time in $\mathcal{T}$, it is replenished up to the upper barrier $b$ whenever it lies below $b$. The aggregate regular and discounted replenishments up to time $t$ are denoted by $R_c^{a, b}(t)$ and $R_p^{a, b}(t)$, respectively. The replenished process is denoted by
\begin{equation}\label{Eq: controlled process}
    Y^{a, b}(t) \coloneqq X(t) + R_c^{a, b}(t) + R_p^{a, b}(t), \quad t \geq 0.
\end{equation}
Processes similar to $Y^{a, b}$ have been studied in \cite{avanzi_hybrid_2016, perez_dual_2020}, among others. For completeness, we present Algorithm \ref{Algo: construction} for constructing the replenishment processes $R_c^{a, b}$ and $R_p^{a, b}$.

\begin{commentbox}
    \caption{Construction of $R^{a, b}_c$ and $R^{a, b}_p$.}
    \label{Algo: construction}
    \commentcontent{
For any initial inventory level $X(0) = x \in \mathbb{R}$:
\begin{enumerate}
    \item [1.] Define $R(t) \coloneqq X(t) - \inf_{0 \leq s \leq t} \left(0 \wedge (X(s) - a)\right)$, the process $X$ reflected at $a$. Denote by $T^-_b \coloneqq \inf\{t \in \mathcal{T}: R(t) < b\}$. For $0 \leq t < \sigma \coloneqq T^-_b$, set $R_p^{a, b}(t) = 0$ and 
    \begin{equation*}
        R_c^{a, b}(t) = -\inf_{0 \leq s \leq t} \left(0 \wedge (X(s) - a)\right).
    \end{equation*}
    Additionally, set $\eta \coloneqq R(\sigma)$. Proceed to the next step.
    \item [2.] For $t \geq \sigma$, set $\tilde{X}(t) \coloneqq X(t) - X(\sigma)$ and $R(t) \coloneqq \tilde{X}(t) - \inf_{\sigma \leq s \leq t} \left(0 \wedge (\tilde{X}(s) + (b - a))\right)$. Denote by $\tilde{T}^{-}_b \coloneqq \inf\{t \in \mathcal{T},\, t > \sigma: R(t) < 0\}$. For $\sigma \leq t < \tilde{\sigma} \coloneqq \tilde{T}^{-}_b$, set 
    \[R_c^{a, b}(t) = R_c^{a, b}(\sigma-) - \inf_{\sigma \leq s \leq t} \left(0 \wedge (\tilde{X}(s) + (b - a))\right), \quad R_p^{a, b}(t) = R_p^{a, b}(\sigma-) + (b - \eta).\]
    Set $\sigma \coloneqq \tilde{\sigma}$ and $\eta \coloneqq b + R(\tilde{\sigma})$. Return to the beginning of this step.
\end{enumerate}
}
\end{commentbox}

\subsection{Expected costs.}
For a hybrid barrier replenishment policy $\pi^{a, b}$, let
\begin{equation}
    v_{a, b} (x) \coloneqq v^{\pi^{a,b}}(x) = v_{a, b}^{f}(x) + v_{a, b}^{r}(x), \quad x \in \mathbb{R}, \label{def_v_a_b} 
\end{equation}
where
\begin{equation*}
    v_{a, b}^{f}(x) \coloneqq \mathbb{E}_x\left[\int^\infty_0e^{-qt} f(Y^{a, b}(t))\, \diff  t\right], \quad v_{a, b}^{r}(x) \coloneqq \mathbb{E}_{x}\left[ \int_{[0, \infty)} e^{-qt} (K_c\, \diff R_c^{a, b}(t) + K_p \, \diff R_p^{a, b}(t))\right].
\end{equation*}
We will demonstrate that these expectations are finite under our assumptions, and thus the decomposition is well-defined.

To this end, we introduce the scale functions of a spectrally negative L\'evy process. For $q \geq 0$, denote by $W^{(q)}: \mathbb{R} \to [0 , \infty)$ the $q$-scale function of $X$. On the positive half-line, the scale function $W^{(q)}$ is a continuous and strictly increasing function that satisfies
\begin{equation}
    \int_0^\infty e^{-sx} W^{(q)}(x)\, \diff x = \frac{1}{\psi(s) - q}, \quad s > \Phi(q),
\end{equation}
where $\psi$ is the Laplace exponent as defined in \eqref{Eq: laplace exponent} and 
\[
\Phi(q) \coloneqq \sup \{s \geq 0: \psi(s) = q\}, \quad q \geq 0,
\]
is its right inverse. 
On the negative half-line, $W^{(q)}$ is set to be zero. Below, we introduce several auxiliary functions that will be used in subsequent computations. A detailed review of their properties and applications can be found in \cite{kuznetsov_theory_2013, kyprianou_fluctuations_2014}.

For $x \in \mathbb{R}$, define
\begin{align*}
    \overline{W}^{(q)}(x) &\coloneqq \int_0^{x} W^{(q)}(y)\, \diff y, \qquad \overline{\overline{W}}^{(q)}(x) \coloneqq \int_0^{x} \overline{W}^{(q)}(y)\, \diff y, \\
    Z^{(q)}(x) &\coloneqq 1 + q\overline{W}^{(q)}(x), \qquad  \overline{Z}^{(q)}(x) \coloneqq \int^x_0 Z^{(q)}(y)\, \diff y = x + q \overline{\overline{W}}^{(q)}(x).
\end{align*}
We note the following known limits, which will be applied in subsequent computations,
\begin{equation}\label{Eq: W, Z limits}
    \lim_{y\to\infty} \frac{W^{(q)}(y + x)}{W^{(q)}(y)} = e^{\Phi(q)x}, \qquad \lim_{y\to\infty} \frac{W^{(q)}(y)}{Z^{(q)}(y)} = \frac{\Phi(q)}{q}, \quad x \in \mathbb{R}. 
\end{equation}

We also use the \textit{second scale function}. For $x \in \mathbb{R}$ and $\lambda > 0$,
\begin{equation}\label{Eq: Z phi}
    Z^{(q + \lambda)}(x, \Phi(q)) \coloneqq e^{\Phi(q) x} \left(1 + \lambda\int_0^{x} e^{-\Phi(q) z} W^{(q + \lambda)}(z)\, \diff z \right).
\end{equation}
By differentiation, 
\begin{equation}\label{Eq: Z phi prime}
    Z^{(q + \lambda)'}(x, \Phi(q)) \coloneqq \frac{\partial}{\partial x} Z^{(q + \lambda)}(x,\Phi(q)) = \Phi(q) Z^{(q + \lambda)}(x,\Phi(q)) + \lambda W^{(q + \lambda)}(x), \quad x > 0.
\end{equation}

For $a \in \mathbb{R}$, $q > 0$, and $h: \mathbb{R} \to \mathbb{R}$, we define
\begin{equation}\label{Eq: rho}
    \rho_a^{(q)}(x; h) \coloneqq \int_a^x h(y) W^{(q)}(x - y) \, \diff y, \quad x \in \mathbb{R}.
\end{equation}
Additionally, for $b, y \in \mathbb{R}$ and $h: \mathbb{R} \to \mathbb{R}$, define the operator
\begin{equation*}
    (\mathcal{A}^{(q,\lambda, b, y)} h)(x) \coloneqq h(x - y) -\lambda\int_{b}^{x} W^{(q)}(x - z) h(z - y)\, \diff z, \quad x \in \mathbb{R},
\end{equation*}
along with
\begin{align}
\begin{split}\label{Eq: scale scr}
    &\mathscr{Z}^{(q,\lambda)}_{b}(x, y) \coloneqq (\mathcal{A}^{(q,\lambda, b, y)} Z^{(q + \lambda)})(x), \quad \overline{\mathscr{Z}}^{(q,\lambda)}_{b}(x, y) \coloneqq (\mathcal{A}^{(q,\lambda, b, y)} \overline{Z}^{(q + \lambda)})(x), \\
    &\mathscr{W}^{(q,\lambda)}_{b}(x, y) \coloneqq (\mathcal{A}^{(q,\lambda, b, y)} W^{(q + \lambda)})(x), \quad \overline{\mathscr{W}}^{(q,\lambda)}_{b}(x, y) \coloneqq (\mathcal{A}^{(q,\lambda, b, y)} \overline{W}^{(q + \lambda)})(x). 
\end{split}
\end{align}

Lastly, for $a < b$, $x \in \mathbb{R}$, and $h: \mathbb{R} \to \mathbb{R}$, we define
\begin{align}
    \mathcal{H}^{(q, \lambda)}_{b}(x, a; h) &\coloneqq \rho_b^{(q)}(x; h) + \int_a^b h(y) \mathscr{W}^{(q,\lambda)}_{b}(x, y)\, \diff y + \frac{\lambda}{q} \rho_a^{(q + \lambda)}(b; h) Z^{(q)}(x - b), \nonumber \\ 
    \mathcal{K}_{b}^{(q, \lambda)}(x, a) &\coloneqq \frac{1}{\lambda + q} \left(q\frac{\mathscr{Z}^{(q,\lambda)}_{b}(x, a)}{Z^{(q + \lambda)}(b - a)} + \lambda Z^{(q)}(x - b)\right),\label{Eq: script K}\\ 
    \Theta^{(q + \lambda)}(x, \Phi(q)) &\coloneqq 1 + \lambda \int_0^{x} e^{-\Phi(q)y} W^{(q + \lambda)}(y) \, \diff y. \notag
\end{align}

Lemmas \ref{Lemma: inventory cost} and \ref{Lemma: control costs} provide the analytical expressions for inventory-holding and replenishment costs under $\pi^{a, b}$. The proofs are presented in Appendices \ref{Appendix: proof of inventory cost} and \ref{Appendix: proof of control cost}, respectively.
\begin{lemma}[inventory-holding cost] \label{Lemma: inventory cost}
    For $a < b$ and $x \in \mathbb{R}$,
    \begin{equation}\label{Eq: inventory cost}
        v_{a, b}^{f}(x) = -\mathcal{H}^{(q, \lambda)}_{b}(x, a; f) + \mathcal{G}^f(b) \mathcal{K}_{b}^{(q, \lambda)}(x, a),
    \end{equation}
    where
    \begin{multline}\label{Eq: v^h(b)}
        \mathcal{G}^f(b) 
        \coloneqq \left[\frac{\Phi(q)}{q}\left( \int_0^\infty e^{-\Phi(q)y} f(y + a) \, \diff y + \lambda \int_0^{b - a} e^{-\Phi(q)y}\rho_a^{(q + \lambda)}(y + a; f)\, \diff y\right) + \frac{\lambda}{q} e^{-\Phi(q)(b - a)} \rho_a^{(q + \lambda)}(b; f) \right]\\
        \times \frac{Z^{(q + \lambda)}(b - a)}{\Theta^{(q + \lambda)}(b - a, \Phi(q))}, 
    \end{multline}
    which can also be written as
    \begin{align} \label{Eq: v^f(b)}
        \mathcal{G}^f(b) = \left(\frac{ \int_0^\infty e^{-\Phi(q)y} f'(y + a) \diff y + \lambda \int_0^{b - a} e^{-\Phi(q)y}\rho_a^{(q + \lambda)}(y + a; f') \, \diff y}{\Theta^{(q + \lambda)}(b - a, \Phi(q))} + f(a)\right) \frac {Z^{(q + \lambda)}(b - a)} q.
    \end{align}
\end{lemma}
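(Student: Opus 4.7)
The plan is to derive $v^f_{a,b}$ by conditioning on the first discounted replenishment opportunity $\mathbf{e}_\lambda = T(1)$, which is exponentially distributed with rate $\lambda$ and independent of $X$. On $[0,\mathbf{e}_\lambda)$ the controlled process $Y^{a,b}$ coincides with $R^a$, the L\'evy process $X$ reflected at $a$ from below, since the lower barrier is enforced continuously while the upper replenishment fires only at Poisson times. At time $\mathbf{e}_\lambda$, $Y^{a,b}$ is replaced by $R^a(\mathbf{e}_\lambda) \vee b$, after which, by the strong Markov property and the memorylessness of the Poisson process, the process restarts as a fresh copy. This gives the renewal-type equation
\begin{equation*}
v^f_{a,b}(x) = \mathbb{E}_x\Bigl[\int_0^{\mathbf{e}_\lambda} e^{-qt} f(R^a(t))\,\diff t\Bigr] + \mathbb{E}_x\bigl[e^{-q\mathbf{e}_\lambda}\, v^f_{a,b}\bigl(R^a(\mathbf{e}_\lambda) \vee b\bigr)\bigr],
\end{equation*}
with the natural modification when $x<a$ reducing to the case $x=a$ via an instantaneous regular replenishment.

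Next I would evaluate the first expectation via the $(q+\lambda)$-resolvent density of $R^a$, which is classical and expressible in terms of $W^{(q+\lambda)}$ and $Z^{(q+\lambda)}$ (see \cite{kyprianou_fluctuations_2014}). Splitting the spatial range into $[a,b]$ and $[b,\infty)$ produces scale-function integrals of $f$: the integrals over $[a,b]$ reassemble into the $\rho^{(q+\lambda)}$ and $\mathscr{W}^{(q,\lambda)}_b$ pieces of $\mathcal{H}^{(q,\lambda)}_b(x,a;f)$, while those over $[b,\infty)$ combine with the second expectation in the renewal equation. For the second expectation, on $\{R^a(\mathbf{e}_\lambda) \geq b\}$ the process $Y^{a,b}$ coincides with $X$ until its first down-crossing of $b$, so $v^f_{a,b}(R^a(\mathbf{e}_\lambda))$ reduces via another application of the strong Markov property at the down-crossing time to an explicit scale-function expression plus a factor of $v^f_{a,b}(b-)$-type boundary terms; on the complementary event $\{R^a(\mathbf{e}_\lambda)<b\}$ the integrand is the unknown constant $v^f_{a,b}(b)$. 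Collecting everything produces a linear equation for $v^f_{a,b}(b)$ whose coefficients are built from the objects in \eqref{Eq: scale scr}--\eqref{Eq: script K}; solving it (for instance by matching at $x=b$, where several simplifications such as $\mathscr{W}^{(q,\lambda)}_b(b,y)=W^{(q+\lambda)}(b-y)$ are available) identifies $\mathcal{G}^f(b)/q = v^f_{a,b}(b)$ in the form \eqref{Eq: v^h(b)}.

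To obtain the equivalent expression \eqref{Eq: v^f(b)}, I would start from \eqref{Eq: v^h(b)} and expand $f(y+a) = f(a) + \int_0^y f'(z+a)\,\diff z$ inside each integral containing $f$. Fubini then yields the $f'$-integrals displayed in \eqref{Eq: v^f(b)}, while the residual $f(a)$-contribution collapses via the definitions \eqref{Eq: Z phi}--\eqref{Eq: Z phi prime} of $Z^{(q+\lambda)}(\cdot,\Phi(q))$ and $\Theta^{(q+\lambda)}(\cdot,\Phi(q))$ and a single integration by parts to the clean term $f(a)\,Z^{(q+\lambda)}(b-a)/q$.

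The main obstacle is the bookkeeping that matches the scale-function integrals from the renewal equation to the compact operators $\mathcal{H}^{(q,\lambda)}_b$ and $\mathcal{K}^{(q,\lambda)}_b$ built from $\mathcal{A}^{(q,\lambda,b,y)}$. In particular, the delicate step is tracking the contribution from the half-line $[b,\infty)$, where $Y^{a,b}$ initially coincides with $X$, and showing that when combined with the interior contribution it recombines cleanly into the $Z^{(q)}(x-b)$-prefactor inside $\mathcal{K}^{(q,\lambda)}_b$ and the structure of $\mathcal{H}^{(q,\lambda)}_b$, rather than scattering across multiple incompatible scale-function pieces. Once this consolidation is verified, the pieces slot into the claimed formula.
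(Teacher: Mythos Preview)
Your general strategy---a renewal decomposition at a suitable stopping time combined with the strong Markov property---is correct, and the integration-by-parts argument for passing from \eqref{Eq: v^h(b)} to \eqref{Eq: v^f(b)} is fine. But the specific decomposition you propose has a gap that prevents it from closing to a single scalar equation.

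The difficulty is that at time $\mathbf{e}_\lambda$ the reflected process $R^a(\mathbf{e}_\lambda)$ can lie \emph{anywhere} in $[a,\infty)$, and in particular anywhere in $[b,\infty)$. Your renewal equation therefore involves the unknown function $v^f_{a,b}$ on all of $[b,\infty)$, not a single constant. You try to remedy this by stopping at $\tau_b^-$ for $y\geq b$, but since $X$ is spectrally negative, $X(\tau_b^-)$ has a nontrivial overshoot distribution on $(-\infty,b)$; the phrase ``$v^f_{a,b}(b-)$-type boundary terms'' glosses over the fact that you then need $v^f_{a,b}$ on all of $[a,b)$, and substituting your renewal equation for those values reintroduces the unknown on $[b,\infty)$. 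The result is a genuine Fredholm integral equation, not ``a linear equation for $v^f_{a,b}(b)$'' as you claim.

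The paper avoids this by a sharper choice of stopping time. For $x\leq b$ it stops at $\kappa_b^+\wedge T(1)$, where $\kappa_b^+$ is the first time the reflected process $U^{(a)}$ reaches $b$. The point is that at this time the controlled process is \emph{exactly} at $b$: either $U^{(a)}$ creeps up to $b$ (no upward jumps), or the Poisson clock rings while below $b$ and the periodic replenishment pushes the level to precisely $b$. This yields an expression for $g(x;c)$ that is genuinely affine in the single unknown $g(b;c)$. For $x>b$ the paper stops at $\tau_b^-\wedge\tau_c^+$ and feeds the overshoot into the $x\leq b$ formula; the overshoot expectations are then handled by the identities \eqref{pm}--\eqref{8} involving $\mathscr{W}^{(q,\lambda)}_b$ and $\mathscr{Z}^{(q,\lambda)}_b$, which is exactly where those operators enter. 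The upper truncation at $c$ (taking $c\to\infty$ at the end) is used to keep all quantities finite during the derivation for bounded compactly supported $h$, before extending to $f$.
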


\begin{lemma}[replenishment cost]\label{Lemma: control costs} For $a < b$ and $x \in \mathbb{R}$,
    \begin{align} \label{Eq: control costs}
    \begin{split}
        v^{r}_{a, b}(x) &= \mathcal{K}_{b}^{(q, \lambda)}(x, a)(K_p k_1 + K_c k_2) - \frac{\lambda}{\lambda + q} K_p \overline{Z}^{(q)}(x - b)\\
        &+ K_c \psi'(0+) \left(\overline{\mathscr{W}}^{(q,\lambda)}_{b}(x, a) + \frac{\lambda}{q} Z^{(q)}(x - b) \overline{W}^{(q + \lambda)}(b - a) \right)\\
        &+ \left(\frac{\lambda}{\lambda + q} K_p - K_c\right) \left(\overline{\mathscr{Z}}^{(q,\lambda)}_{b}(x, a) + \frac{\lambda}{q} Z^{(q)}(x - b) \overline{Z}^{(q + \lambda)}(b - a)\right),
        \end{split}
    \end{align}
    where 
    \begin{align}
        k_1 &\coloneqq \left(\frac{\frac{1}{\lambda + q}\left(1 + \frac{\lambda}{q} Z^{(q + \lambda)}(b - a)\right) e^{-\Phi(q)(b - a)}}{\Theta^{(q + \lambda)}(b - a, \Phi(q))} - \frac{1}{q}  \right) \lambda \frac {Z^{(q + \lambda)}(b - a)} {\Phi(q)}, \label{Eq: k_1}\\
        k_2 &\coloneqq \left(- \frac{\lambda Z^{(q + \lambda)}(b - a) e^{-\Phi(q)(b - a)}}{\Theta^{(q + \lambda)}(b - a, \Phi(q))} + \lambda + q - \Phi(q) \psi'(0+) \right) \frac {Z^{(q + \lambda)}(b - a)} {q\Phi(q)}. \label{Eq: k_2}
    \end{align}
\end{lemma}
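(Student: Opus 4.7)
The plan is to decompose
\[
v^r_{a,b}(x) = K_c\, I_c(x) + K_p\, I_p(x), \qquad I_c(x) := \mathbb{E}_x\!\left[\int_{[0,\infty)} e^{-qt}\,\diff R_c^{a,b}(t)\right], \quad I_p(x) := \mathbb{E}_x\!\left[\int_{[0,\infty)} e^{-qt}\,\diff R_p^{a,b}(t)\right],
\]
and to reduce each of $I_c$ and $I_p$ to an inventory-holding integral already covered by Lemma~\ref{Lemma: inventory cost}.

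For $I_p$: since $R_p^{a,b}$ jumps at each Poisson time $T(i)$ by $(b - Y^{a,b}(T(i)-))^+$, the Poisson compensation formula applied with intensity $\lambda$ (and the $\mathbb{F}$-predictability of $Y^{a,b}(t-)$) yields
\[
I_p(x) = \lambda\, \mathbb{E}_x\!\left[\int_0^\infty e^{-qt}(b - Y^{a,b}(t))^+\,\diff t\right],
\]
which is exactly $\lambda$ times the expression of Lemma~\ref{Lemma: inventory cost} applied with the convex, piecewise linear $f(y) = (b - y)^+$. For a second relation I would apply the integration-by-parts formula to $e^{-qt} Y^{a,b}(t)$, combined with the semimartingale decomposition $Y^{a,b} = X + R_c^{a,b} + R_p^{a,b}$ and the L\'evy identity $\mathbb{E}_x[\int_0^\infty e^{-qt}\,\diff X(t)] = \psi'(0+)/q$. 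After justifying the vanishing of the boundary term at infinity by a localization argument, using admissibility of $\pi^{a,b}$ together with Assumption~\ref{asm: on X}, this gives
\[
I_c(x) + I_p(x) = q\, \mathbb{E}_x\!\left[\int_0^\infty e^{-qt} Y^{a,b}(t)\,\diff t\right] - x - \frac{\psi'(0+)}{q}.
\]

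Rewriting $v^r_{a,b} = K_c(I_c + I_p) - (K_c - K_p) I_p$ therefore expresses the replenishment cost as a linear combination of two instances of Lemma~\ref{Lemma: inventory cost}, with $f(y) = y$ and $f(y) = (b - y)^+$, plus the affine residual $-K_c\, x - K_c \psi'(0+)/q$. Substituting the two evaluations of \eqref{Eq: inventory cost} and using one integration by parts to convert $\rho_a^{(q)}(\,\cdot\,; y)$ and $\rho_a^{(q+\lambda)}(\,\cdot\,; y)$ into expressions involving $\overline{W}^{(q)}, \overline{W}^{(q+\lambda)}, \overline{Z}^{(q)}, \overline{Z}^{(q+\lambda)}$ generates the terms $\overline{\mathscr{W}}^{(q,\lambda)}_b$, $\overline{\mathscr{Z}}^{(q,\lambda)}_b$, and $\overline{Z}^{(q)}(x - b)$ appearing in \eqref{Eq: control costs}.

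I expect the main obstacle to be the algebraic collection of the boundary contributions into the compact coefficient $K_p k_1 + K_c k_2$ multiplying $\mathcal{K}^{(q,\lambda)}_b(x, a)$: the two copies of $\mathcal{G}^f(b)$ from \eqref{Eq: v^f(b)} (one per choice of $f$) and the affine residual must telescope through repeated use of the definition of $\Theta^{(q+\lambda)}(b-a, \Phi(q))$, the derivative relation \eqref{Eq: Z phi prime}, and the cancellation of $\Phi(q)$-tilted scale integrals against their untilted counterparts, so as to reproduce exactly \eqref{Eq: k_1}--\eqref{Eq: k_2}. This final simplification is routine but lengthy.
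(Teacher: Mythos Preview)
Your approach is correct and genuinely different from the paper's. The paper does not reduce the replenishment costs to the resolvent formula of Lemma~\ref{Lemma: inventory cost}; instead it imports Propositions~5.1 and~5.2 of \cite{perez_dual_2020}, which already provide closed-form expressions for $\mathbb{E}_x[\int_{[0,\eta_c]} e^{-qt}\,\diff R_p^{a,b}(t)]$ and $\mathbb{E}_x[\int_{[0,\eta_c]} e^{-qt}\,\diff R_c^{a,b}(t)]$ in terms of $\mathcal{K}_b^{(q,\lambda)}$ and auxiliary functions $k_b^{(q,\lambda)}$, $i_b^{(q,\lambda)}$, and then sends $c\to\infty$ via Lemma~\ref{Lemma: B2} and explicit limit computations to obtain $k_1$ and $k_2$.

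Your route---Poisson compensation to write $I_p$ as $\lambda$ times an occupation integral with $f(y)=(b-y)^+$, and integration by parts on $e^{-qt}Y^{a,b}(t)$ to express $I_c+I_p$ via the occupation integral with $f(y)=y$---is self-contained: it avoids the dependence on \cite{perez_dual_2020} and re-uses only the potential measure already established in Proposition~\ref{Prop: resolvent computation}. The trade-off is that the paper's method hands you the structure $\mathcal{K}_b^{(q,\lambda)}(x,a)\cdot(\text{const.}) - (\text{explicit function of }x)$ for free, so the only remaining work is identifying the limiting constants; in your approach the same structure must emerge from collecting the $\mathcal{H}_b^{(q,\lambda)}$ and $\mathcal{G}^f(b)\,\mathcal{K}_b^{(q,\lambda)}$ pieces across two different choices of $f$ together with the affine residual, which is the algebraic step you correctly flag as the main obstacle. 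Both paths lead to comparable amounts of bookkeeping, but yours is arguably more conceptual and does not require the reader to consult an external fluctuation identity.
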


For $a < b$, further define
\begin{align}\label{Eq: Gamma}
    \Gamma(a, b) &\coloneqq \int_0^\infty e^{-\Phi(q)y} \tilde{f'}(y + a) \, \diff y + \lambda \int_0^{b - a} e^{-\Phi(q)y}\rho_a^{(q + \lambda)}(y + a; \tilde{f}') \, \diff y + \frac{\lambda}{\Phi(q)} e^{-\Phi(q)(b - a)} \left(K_p - K_c\right).
\end{align}
By combining \eqref{Eq: inventory cost} and \eqref{Eq: control costs}, we establish the total cost corresponding to $\pi^{a, b}$. The proof of Lemma \ref{Lemma: NPV of costs} is presented in Appendix \ref{Appendix: proof of NPV of costs}. 
\begin{lemma}\label{Lemma: NPV of costs}
    For $a < b$ and $x \in \mathbb{R}$,
    \begin{align}\label{Eq: NPV of costs}
    \begin{split}
        v_{a, b}(x) &= -  \mathcal{H}^{(q, \lambda)}_{b}(x, a; f) + \frac{1}{q} \left(\frac{ \Gamma(a, b)}{\Theta^{(q + \lambda)}(b - a, \Phi(q))} + f(a) - K_c\psi'(0+) \right)  \mathcal{K}_{b}^{(q, \lambda)}(x, a) Z^{(q + \lambda)}(b - a) \\
        &- \frac{\lambda}{\lambda + q} K_p \overline{Z}^{(q)}(x - b) + K_c \psi'(0+) \left(\overline{\mathscr{W}}^{(q,\lambda)}_{b}(x, a) + \frac{\lambda}{q} Z^{(q)}(x - b) \overline{W}^{(q + \lambda)}(b - a) \right)\\
        &+ \left(\frac{\lambda}{\lambda + q} K_p - K_c\right) \left(\overline{\mathscr{Z}}^{(q,\lambda)}_{b}(x, a) + \frac{\lambda}{q} Z^{(q)}(x - b) \overline{Z}^{(q + \lambda)}(b - a)\right).
        \end{split}
    \end{align}
\end{lemma}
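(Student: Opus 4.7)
The plan is to derive \eqref{Eq: NPV of costs} by substituting Lemmas \ref{Lemma: inventory cost} and \ref{Lemma: control costs} into the decomposition $v_{a,b}(x) = v_{a,b}^f(x) + v_{a,b}^r(x)$ from \eqref{def_v_a_b} and collecting terms. After substitution, the contributions $-\mathcal{H}^{(q,\lambda)}_b(x,a;f)$, $-\frac{\lambda}{\lambda+q}K_p\overline{Z}^{(q)}(x-b)$, the $K_c\psi'(0+)$ block, and the $\bigl(\frac{\lambda}{\lambda+q}K_p - K_c\bigr)$ block appear verbatim on both sides of \eqref{Eq: NPV of costs}. Hence the entire content of the lemma reduces to the scalar identity
\begin{equation*}
    \mathcal{G}^f(b) + K_p k_1 + K_c k_2 = \frac{Z^{(q+\lambda)}(b-a)}{q}\left(\frac{\Gamma(a,b)}{\Theta^{(q+\lambda)}(b-a,\Phi(q))} + f(a) - K_c\psi'(0+)\right).
\end{equation*}

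To prove this identity I would use form \eqref{Eq: v^f(b)} of $\mathcal{G}^f(b)$, which already carries the $f(a)Z^{(q+\lambda)}(b-a)/q$ term outside the $1/\Theta^{(q+\lambda)}(b-a,\Phi(q))$ factor. Since $\tilde{f}'(x) = f'(x) + qK_c + \lambda(K_c-K_p)$, the integrals in $\Gamma(a,b)$ defined in \eqref{Eq: Gamma} decompose as the $f'$-integrals already in $\mathcal{G}^f(b)$ plus residual constant-in-$y$ contributions weighted by $1/\Phi(q)$ and by $\lambda\int_0^{b-a}e^{-\Phi(q)y}\overline{W}^{(q+\lambda)}(y)\,\diff y$, together with the boundary term $\frac{\lambda(K_p-K_c)}{\Phi(q)}e^{-\Phi(q)(b-a)}$. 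Integration by parts converts the middle integral using $\lambda\int_0^{b-a}e^{-\Phi(q)y}W^{(q+\lambda)}(y)\,\diff y = \Theta^{(q+\lambda)}(b-a,\Phi(q)) - 1$, and expanding $K_p k_1 + K_c k_2$ via \eqref{Eq: k_1} and \eqref{Eq: k_2} then separates both sides into a piece proportional to $1/\Theta^{(q+\lambda)}(b-a,\Phi(q))$ and a constant piece. The $-K_c\psi'(0+)$ contribution on the right is produced by the isolated $-\Phi(q)\psi'(0+)$ summand inside $k_2$; the constant pieces match via the elementary linear relation $qK_p = \bigl(qK_c + \lambda(K_c-K_p)\bigr) + (\lambda+q)(K_p-K_c)$; and the $1/\Theta^{(q+\lambda)}(b-a,\Phi(q))$ pieces collapse after cancelling common factors and invoking $Z^{(q+\lambda)}(b-a) = 1 + (\lambda+q)\overline{W}^{(q+\lambda)}(b-a)$.

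The main obstacle is the algebraic bookkeeping. The quantities $k_1$ and $k_2$ are dense, and one must carefully partition each summand according to whether it carries a factor of $1/\Theta^{(q+\lambda)}(b-a,\Phi(q))$, of $e^{-\Phi(q)(b-a)}$, and of $Z^{(q+\lambda)}(b-a)$ or its square. Once these pieces are sorted and the two auxiliary identities above are applied, no further analytic input beyond Lemmas \ref{Lemma: inventory cost} and \ref{Lemma: control costs} is needed, and the proof of Lemma \ref{Lemma: NPV of costs} is complete.
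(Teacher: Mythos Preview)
Your proposal is correct and follows essentially the same route as the paper: both arguments substitute Lemmas \ref{Lemma: inventory cost} and \ref{Lemma: control costs} into \eqref{def_v_a_b}, observe that everything matches except the $\mathcal{K}_{b}^{(q,\lambda)}(x,a)$ coefficient, and then verify the resulting scalar identity (the paper writes it as $A = q^{-1}\bigl(\Gamma(a,b)/\Theta^{(q+\lambda)}(b-a,\Phi(q)) + f(a) - K_c\psi'(0+)\bigr)$ with $A$ as in \eqref{Eq: NPV of costs misc 2}) by passing between $f'$ and $\tilde{f}'$ via \eqref{Eq: tilde f} and using integration by parts on the $\overline{W}^{(q+\lambda)}$ integral. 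The only cosmetic difference is direction: the paper rewrites $\mathcal{G}^f(b)$ in terms of $\tilde{f}'$ (its \eqref{Eq: misc 4}--\eqref{Eq: misc 6}) and then cancels against $K_pk_1+K_ck_2$, whereas you expand $\Gamma(a,b)$ in terms of $f'$ and match against \eqref{Eq: v^f(b)}; the algebra is identical.
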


\section{Selection of barriers}\label{Sect: candidate barriers}
In this section, we outline the approach used to select the candidate barrier values, denoted by $(a^*, b^*)$. We begin by analyzing smoothness properties of the total cost  $v_{a,b}$ as in \eqref{def_v_a_b}, which play a critical role in establishing the optimality of a policy.

\subsection{Smoothness}
The following notion of smoothness is particularly useful for L\'evy-based models:
\begin{definition}\label{def: sufficiently smooth}
    A function is \emph{sufficiently smooth} if it is continuously differentiable on $\mathbb{R}$ when $X$ is of bounded variation and twice continuously differentiable on $\mathbb{R}$ when $X$ is of unbounded variation.
\end{definition}

The following lemma shows that $x \mapsto v_{a, b}(x)$ is sufficiently smooth everywhere on $\mathbb{R}$, except possibly at the lower barrier $a$. The proof is provided in Appendix \ref{Appendix: proof of NPV derivatives}.
\begin{lemma}\label{Lemma: NPV derivatives}
    For $a < b$ and $x \in \mathbb{R}\backslash \{a\}$, 
    \begin{multline}\label{Eq: NPV derivative}
        v'_{a, b}(x) = -\rho_b^{(q)}(x; \tilde{f}') - \int_a^b \tilde{f}'(y) \mathscr{W}^{(q,\lambda)}_{b}(x, y)\, \diff y + \frac{\Gamma(a, b) \mathscr{W}^{(q,\lambda)}_{b}(x, a)}{\Theta^{(q + \lambda)}(b - a, \Phi(q))} + \lambda (K_c - K_p) \overline{W}^{(q)}(x - b) - K_c,
    \end{multline}
    which can also be written as 
    \begin{equation}\label{Eq: NPV derivative 2}
        v'_{a, b}(x) = -\mathcal{H}_{b}^{(q, \lambda)}(x, a; \tilde{f}') + \left(\frac{\lambda}{\lambda + q} K_p - K_c\right)\frac{\lambda + q}{q} - \frac{\lambda}{q} \gamma(a, b) Z^{(q)}(x - b) + \frac{\Gamma(a, b) \mathscr{W}^{(q,\lambda)}_{b}(x, a)}{\Theta^{(q + \lambda)}(b - a, \Phi(q))}.
    \end{equation}
    If $X$ is of unbounded variation, for $x \in \mathbb{R}\backslash \{a\}$, 
    \begin{multline}\label{Eq: NPV 2nd derivative}
        v''_{a, b}(x) = -\int_b^x \tilde{f}'(y) W^{(q)'}(x - y) \, \diff y - \int_a^b \tilde{f}'(y) \frac{\diff}{\diff x}\mathscr{W}^{(q,\lambda)}_{b}(x, y)\, \diff y \\
        + \frac{\Gamma(a, b) \frac{\diff}{\diff x} \mathscr{W}^{(q,\lambda)}_{b}(x, a)}{\Theta^{(q + \lambda)}(b - a, \Phi(q))} + \lambda (K_c - K_p) W^{(q)}(x - b),
    \end{multline}
    where, when $X$ is of unbounded variation,
    \begin{equation*}
        \frac{\diff}{\diff x}\mathscr{W}^{(q,\lambda)}_{b}(x, y) = W^{(q + \lambda)'}(x - y) -\lambda\int_{b}^{x} W^{(q)'}(x - z)W^{(q + \lambda)}(z - y) \, \diff z, \quad x, y \in \mathbb{R}. 
    \end{equation*}
\end{lemma}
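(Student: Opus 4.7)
The plan is to differentiate the explicit expression \eqref{Eq: NPV of costs} term-by-term and then simplify. The elementary building blocks behave predictably: $\frac{\diff}{\diff x}\overline{Z}^{(q)}(x-b) = Z^{(q)}(x-b)$, $\frac{\diff}{\diff x}Z^{(q)}(x-b) = q W^{(q)}(x-b)$, and $\frac{\diff}{\diff x}\overline{W}^{(q)}(x-b) = W^{(q)}(x-b)$. For the convolution objects $\rho_b^{(q)}(x;f)$, $\mathscr{W}^{(q,\lambda)}_b(x,y)$, $\overline{\mathscr{W}}^{(q,\lambda)}_b(x,a)$, and $\overline{\mathscr{Z}}^{(q,\lambda)}_b(x,a)$, viewed as functions of $x$, I would invoke Leibniz's rule, taking care of the value of $W^{(q)}(0)$: in the bounded-variation case $W^{(q)}(0) = 1/\delta$, producing an endpoint term at $y = x$, while in the unbounded-variation case $W^{(q)}(0) = 0$. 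Any such endpoint contributions either vanish outright or, in the bounded-variation regime, pair off with analogous contributions from the other convolution terms, so that the total $v'_{a,b}$ is continuous across $x = b$; the exclusion of $x = a$ in the statement reflects the regular reflection at the lower barrier.

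After differentiation, the raw expression still involves $f$ rather than $\tilde{f}'$. To reach \eqref{Eq: NPV derivative}, I would integrate by parts in the $f$-integrals, both in $\rho_b^{(q)}(x;f)$ and in the $f$-integrals hidden inside $\mathcal{H}^{(q,\lambda)}_b(x,a;f)$ and the prefactor $\mathcal{G}^f(b)$, noting that the passage from \eqref{Eq: v^h(b)} to \eqref{Eq: v^f(b)} has already done part of this work; the boundary contributions at $a$ and $b$ absorb the explicit $f(a)$ factor produced via \eqref{Eq: v^f(b)}. The substitution $f'(y) = \tilde{f}'(y) - qK_c - \lambda(K_c-K_p)$ then converts the resulting derivative-integrals into $\tilde{f}'$-integrals plus integrals of pure scale functions. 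Using identities such as $Z^{(q)}(x-b) - 1 = q\overline{W}^{(q)}(x-b)$, these pure-scale integrals collapse into the constant $-K_c$ and the term $\lambda(K_c - K_p)\overline{W}^{(q)}(x-b)$ of \eqref{Eq: NPV derivative}.

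The equivalent form \eqref{Eq: NPV derivative 2} then follows by algebraic rearrangement: expanding $\mathcal{H}_{b}^{(q,\lambda)}(x,a;\tilde{f}')$ through its definition with $h = \tilde{f}'$, and recognising $\Gamma(a,b)$ via \eqref{Eq: Gamma}, one matches the $\int_b^x \tilde{f}'(y) W^{(q)}(x-y)\,\diff y$ and $\int_a^b \tilde{f}'(y)\, \mathscr{W}^{(q,\lambda)}_b(x,y)\,\diff y$ pieces directly, while the remaining constants and $Z^{(q)}(x-b)$ contributions group into $(\frac{\lambda}{\lambda+q}K_p - K_c)\frac{\lambda+q}{q}$ and $-\frac{\lambda}{q}\gamma(a,b) Z^{(q)}(x-b)$. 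For the second derivative \eqref{Eq: NPV 2nd derivative}, I would start from \eqref{Eq: NPV derivative}: under the unbounded-variation hypothesis, $W^{(q)}$ is continuously differentiable on $(0,\infty)$ with $W^{(q)}(0+) = 0$, so every Leibniz boundary contribution vanishes and one may differentiate a second time under the integral signs; the stated formula for $\frac{\diff}{\diff x}\mathscr{W}^{(q,\lambda)}_b(x,y)$ follows directly from \eqref{Eq: scale scr} by the same device.

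The main obstacle is the algebraic bookkeeping in the first step: reconciling the boundary terms generated by integration by parts, the $f(a)$ and $K_c\psi'(0+)$ factors hidden in the prefactor of $\mathcal{K}^{(q,\lambda)}_b$ in \eqref{Eq: NPV of costs}, and the constant corrections from passing between $f'$ and $\tilde{f}'$. Recognising the ``flat'' structure of $Z^{(q)}(x-b)$ on $(-\infty, b]$, where it equals $1$, and repeatedly exploiting the companion identity $q\overline{W}^{(q)}(x-b) = Z^{(q)}(x-b) - 1$, is what ultimately produces the compact form claimed by the lemma.
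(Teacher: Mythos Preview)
Your proposal is correct and follows essentially the same route as the paper: differentiate the explicit cost expression term-by-term (the paper does this for $v^f_{a,b}$ and $v^r_{a,b}$ separately rather than for the combined \eqref{Eq: NPV of costs}, but that is immaterial), integrate by parts to pass from $f$ to $f'$, substitute $f' = \tilde{f}' - qK_c - \lambda(K_c-K_p)$, and collapse the resulting pure-scale-function integrals via identities such as $Z^{(q)}(x-b)-1 = q\overline{W}^{(q)}(x-b)$. The paper likewise obtains \eqref{Eq: NPV derivative 2} from \eqref{Eq: NPV derivative} plus \eqref{Eq: gamma}, and \eqref{Eq: NPV 2nd derivative} by one more differentiation, exactly as you outline.
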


\subsection{Candidate barriers.}
We now specify the candidate barriers. Motivated by studies of similar stochastic control problems (see, e.g., \cite{perez_dual_2020}), we seek barriers $a^* < b^*$ that satisfy the following pair of equations:
\begin{equation*}
    v_{a^*, b^*}^{f'}(a^*) = -K_c, \quad 
    v_{a^*, b^*}^{f'}(b^*) = -K_p,
\end{equation*}
where 
\begin{equation*}
    v_{a, b}^{f'}(x) \coloneqq  \mathbb{E}_{x}\left[\int^\infty_0e^{-qt} f'(Y^{a, b}(t))\, \diff t\right], \quad a < b,\, x\in \mathbb{R}.
\end{equation*}
To this end, we compute $v_{a, b}^{f'}$. By differentiating $\Gamma$ as defined in \eqref{Eq: Gamma} with respect to the second argument, we obtain
\begin{equation}\label{Eq: partial Gamma}
    \frac{\partial }{\partial b} \Gamma(a, b) = \lambda e^{-\Phi(q)(b - a)} \left( \rho_a^{(q + \lambda)}(b; \tilde{f}') + \left(K_c - K_p\right) \right), \quad b > a. 
\end{equation}
For $b > a$, define
\begin{align} \label{Eq: gamma}
    \gamma(a, b) &\coloneqq -\frac{1}{\lambda} e^{\Phi(q)(b - a)}\frac{\partial}{\partial b} \Gamma(a, b) = -\rho_a^{(q + \lambda)}(b; \tilde{f}') + K_p - K_c. 
\end{align}

\begin{lemma}\label{Lemma: auxiliary result}
    For $a < b$ and $x \in \mathbb{R}$, we have
    \begin{equation}\label{Eq: v^f'}
        v_{a, b}^{f'}(x) = -\mathcal{H}_{b}^{(q, \lambda)}(x, a; \tilde{f}') + \left(\frac{\lambda}{\lambda + q} K_p - K_c\right)\frac{\lambda + q}{q} + \frac{\frac{\Phi(q)}{q} \Gamma(a, b) - \frac{\lambda}{q} e^{-\Phi(q)(b - a)} \gamma(a, b)}{\Theta^{(q + \lambda)}(b - a, \Phi(q))} Z^{(q + \lambda)}(b - a)\mathcal{K}_{b}^{(q, \lambda)}(x, a).
    \end{equation}
    In particular, 
    \begin{multline}\label{Eq: v^f' at a}
        v_{a, b}^{f'}(a) =  \left(\frac{\lambda}{q (\lambda + q)} \frac{\Phi(q) \Gamma(a, b) - \lambda e^{-\Phi(q)(b - a)} \gamma(a, b)}{\Theta^{(q + \lambda)}(b - a, \Phi(q))} Z^{(q + \lambda)}(b - a) + \frac{\lambda}{q} \gamma(a, b)\right)\\
        - K_c + \frac 1 {\lambda+q}\left(\frac{\Phi(q) \Gamma(a, b) - \lambda e^{-\Phi(q)(b - a)} \gamma(a, b)}{\Theta^{(q + \lambda)}(b - a, \Phi(q))}\right)
    \end{multline}
    and
    \begin{equation}\label{Eq: v^f' at b}
        v^{f'}_{a, b}(b) = \frac{\Phi(q) \Gamma(a, b) - \lambda e^{-\Phi(q)(b - a)} \gamma(a, b)}{\Theta^{(q + \lambda)}(b - a, \Phi(q))} \frac{Z^{(q + \lambda)}(b - a)}{q} + \frac{\lambda + q}{q} \gamma(a, b) - K_p.
    \end{equation}
\end{lemma}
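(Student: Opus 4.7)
The plan is to bootstrap off Lemma \ref{Lemma: inventory cost}. Observe that $\tilde f'(x) - f'(x) = qK_c + \lambda(K_c - K_p)$ is a constant independent of $x$; hence by linearity of expectation together with $\mathbb{E}_x\bigl[\int_0^\infty e^{-qt}\,\diff t\bigr] = 1/q$,
\begin{equation*}
v_{a,b}^{f'}(x) = v_{a,b}^{\tilde f'}(x) + \left(\frac{\lambda}{\lambda+q}K_p - K_c\right)\frac{\lambda+q}{q}.
\end{equation*}
Thus it suffices to compute $v_{a,b}^{\tilde f'}(x) = \mathbb{E}_x[\int_0^\infty e^{-qt}\tilde f'(Y^{a,b}(t))\,\diff t]$, which has exactly the same structural form as $v_{a,b}^f(x)$. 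Applying Lemma \ref{Lemma: inventory cost} with $f$ replaced by $\tilde f'$ yields $v_{a,b}^{\tilde f'}(x) = -\mathcal{H}_b^{(q,\lambda)}(x,a;\tilde f') + \mathcal{G}^{\tilde f'}(b)\,\mathcal{K}_b^{(q,\lambda)}(x,a)$.

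Next I would rewrite $\mathcal{G}^{\tilde f'}(b)$, as given by \eqref{Eq: v^h(b)}, in terms of $\Gamma(a,b)$ and $\gamma(a,b)$. Comparing with the definitions \eqref{Eq: Gamma} and \eqref{Eq: gamma}, the first two integrals inside the bracket of \eqref{Eq: v^h(b)} equal $\Gamma(a,b) - \frac{\lambda}{\Phi(q)}e^{-\Phi(q)(b-a)}(K_p - K_c)$, while the third term equals $\frac{\lambda}{q}e^{-\Phi(q)(b-a)}(K_p - K_c - \gamma(a,b))$, using $\rho_a^{(q+\lambda)}(b;\tilde f') = K_p - K_c - \gamma(a,b)$. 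The two $(K_p - K_c)$ contributions cancel after multiplication by $\Phi(q)/q$ and $1$ respectively, producing the coefficient of $\mathcal{K}_b^{(q,\lambda)}(x,a)$ displayed in \eqref{Eq: v^f'} and thereby establishing the first identity.

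For \eqref{Eq: v^f' at a} and \eqref{Eq: v^f' at b}, I would specialize \eqref{Eq: v^f'} at the boundary points, exploiting that $W^{(q)}$ and $W^{(q+\lambda)}$ vanish on $(-\infty,0)$ and $Z^{(q)}(0) = Z^{(q)}(\text{neg.}) = 1$. At $x = b$ one finds $\rho_b^{(q)}(b;\cdot) = 0$, $\mathscr{W}_b^{(q,\lambda)}(b,y) = W^{(q+\lambda)}(b-y)$ and $\mathscr{Z}_b^{(q,\lambda)}(b,a) = Z^{(q+\lambda)}(b-a)$, giving $\mathcal{K}_b^{(q,\lambda)}(b,a) = 1$ and $\mathcal{H}_b^{(q,\lambda)}(b,a;\tilde f') = \frac{\lambda+q}{q}\rho_a^{(q+\lambda)}(b;\tilde f')$. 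At $x = a$ the same vanishing properties give $\rho_b^{(q)}(a;\cdot) = 0$, $\mathscr{W}_b^{(q,\lambda)}(a,y) = 0$ for $y \in (a,b]$, and $\mathscr{Z}_b^{(q,\lambda)}(a,a) = 1$, so that $\mathcal{H}_b^{(q,\lambda)}(a,a;\tilde f') = \frac{\lambda}{q}\rho_a^{(q+\lambda)}(b;\tilde f')$ and $\mathcal{K}_b^{(q,\lambda)}(a,a) = [q + \lambda Z^{(q+\lambda)}(b-a)]/[(\lambda+q)Z^{(q+\lambda)}(b-a)]$. Substituting these values, together with $\rho_a^{(q+\lambda)}(b;\tilde f') = K_p - K_c - \gamma(a,b)$, and regrouping the $K_c, K_p, \gamma(a,b)$ constants delivers \eqref{Eq: v^f' at a} and \eqref{Eq: v^f' at b}.

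The main obstacle I anticipate is purely bookkeeping rather than any new analytic ingredient: the affine shift from $f'$ to $\tilde f'$ combined with the definitions of $\Gamma$ and $\gamma$ produces several ``noise'' terms involving $K_p - K_c$ and $qK_c + \lambda(K_c - K_p)$, and the final clean forms only emerge after these cancel in the right combinations. Beyond Lemma \ref{Lemma: inventory cost} and the standard boundary behavior of the scale functionals, no further tools are needed.
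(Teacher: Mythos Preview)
Your proposal is correct and uses the same underlying machinery as the paper (the resolvent formula of Lemma~\ref{Lemma: inventory cost}/Proposition~\ref{Prop: resolvent computation}), but with a slightly cleaner order of operations. The paper applies the resolvent to $f'$ first, obtaining $v_{a,b}^{f'}(x) = -\mathcal{H}_b^{(q,\lambda)}(x,a;f') + \mathcal{G}^{f'}(b)\,\mathcal{K}_b^{(q,\lambda)}(x,a)$, and then separately converts both $\mathcal{H}_b^{(q,\lambda)}(x,a;f')$ and $\mathcal{G}^{f'}(b)$ from $f'$-form to $\tilde f'$-form via the auxiliary identities \eqref{Eq: misc 4}--\eqref{Eq: misc 6} and \eqref{Eq: inventory derivative misc 3}; this introduces several intermediate $(K_p-K_c)$ terms that must be tracked and cancelled. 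By contrast, you shift $f' \to \tilde f'$ \emph{before} invoking the resolvent, so the constant offset is disposed of in one line and only $\mathcal{G}^{\tilde f'}(b)$ needs to be matched with $\Gamma$ and $\gamma$, which as you observe is a direct two-term cancellation. The boundary evaluations at $x=a$ and $x=b$ are identical in both approaches. Your route is marginally more economical; the paper's route has the minor advantage of reusing identities already established for the proof of Lemma~\ref{Lemma: NPV derivatives}.
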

The proof of Lemma \ref{Lemma: auxiliary result} is deferred to Appendix \ref{Appendix: proof of auxiliary result}. The next result is a direct consequence of this lemma.
\begin{proposition} \label{Prop: iff conditions} For $a < b$, the following statements are equivalent:  
\begin{itemize}
    \item[(1)] $v^{f'}_{a, b}(a) = -K_c$ and $v^{f'}_{a, b}(b) = -K_p$,
    \item[(2)] $\Gamma(a, b) = 0$ and $\gamma(a,b) = 0$,
    \item[(3)] $x \mapsto v_{a, b}(x)$ is sufficiently smooth, with $v'_{a, b}(a) = -K_c$ and $v'_{a, b}(b) = -K_p$.
\end{itemize}
\end{proposition}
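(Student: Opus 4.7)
The plan is to prove the two equivalences (1)$\Leftrightarrow$(2) and (2)$\Leftrightarrow$(3) separately. The first is essentially a linear-algebraic consequence of Lemma \ref{Lemma: auxiliary result}, while the second rests on comparing the explicit formulas for $v_{a,b}'$ and $v^{f'}_{a,b}$ given by Lemmas \ref{Lemma: NPV derivatives} and \ref{Lemma: auxiliary result}, together with boundary analysis at the lower barrier $a$.

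For (1)$\Leftrightarrow$(2), I would introduce the abbreviations $A \coloneqq \Phi(q)\Gamma(a,b) - \lambda e^{-\Phi(q)(b-a)}\gamma(a,b)$ and $B \coloneqq \gamma(a,b)$, and rewrite \eqref{Eq: v^f' at a} and \eqref{Eq: v^f' at b} as a $2\times 2$ linear system in $(A,B)$ whose right-hand sides are $v^{f'}_{a,b}(a)+K_c$ and $v^{f'}_{a,b}(b)+K_p$. A short computation shows that the determinant of the associated matrix is a strictly positive multiple of $Z^{(q+\lambda)}(b-a)$, so (1) forces $A = B = 0$, and these combined yield $\Gamma(a,b) = \gamma(a,b) = 0$. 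The reverse direction is immediate by substitution.

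For (2)$\Rightarrow$(3), I would substitute $\Gamma(a,b) = \gamma(a,b) = 0$ (and hence $A = 0$) into \eqref{Eq: NPV derivative 2} and \eqref{Eq: v^f'} to see that the two formulas coincide, giving $v_{a,b}'(x) = v^{f'}_{a,b}(x)$ for every $x \in \mathbb R \setminus \{a\}$. On the other hand, for $x < a$ it is immediate from the construction of $\pi^{a,b}$ that $v_{a,b}(x) = v_{a,b}(a) + K_c(a-x)$, hence $v_{a,b}'(x) = -K_c$. Using $v^{f'}_{a,b}(a) = -K_c$, which follows from (1) (itself equivalent to (2)), $v_{a,b}'$ extends continuously across $a$, yielding $C^1$-smoothness. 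When $X$ is of unbounded variation, a parallel computation of $v_{a,b}''(a+)$ from \eqref{Eq: NPV 2nd derivative} with $\Gamma(a,b) = 0$ shows that it vanishes, matching $v_{a,b}''(a-) = 0$ and giving $C^2$-smoothness.

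For (3)$\Rightarrow$(2), I would again exploit $v_{a,b}'(x) = -K_c$ for $x < a$. Evaluating \eqref{Eq: NPV derivative} at $x = a+$ and simplifying using $\rho_b^{(q)}(a;\tilde{f}') = 0$ and $\mathscr{W}^{(q,\lambda)}_b(a,y) = 0$ for $y > a$ produces $v_{a,b}'(a+) = -K_c + \Gamma(a,b)\,\mathscr{W}^{(q,\lambda)}_b(a,a)/\Theta^{(q+\lambda)}(b-a,\Phi(q))$. When $X$ is of bounded variation, $\mathscr{W}^{(q,\lambda)}_b(a,a) = W^{(q+\lambda)}(0+) = 1/\delta > 0$, so $C^1$-smoothness at $a$ forces $\Gamma(a,b) = 0$. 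When $X$ is of unbounded variation, $C^1$-smoothness at $a$ is automatic since $W^{(q+\lambda)}(0+) = 0$, and I would instead read off $v_{a,b}''(a+)$ from \eqref{Eq: NPV 2nd derivative}, obtaining a strictly positive (possibly infinite) multiple of $\Gamma(a,b)$ involving $W^{(q+\lambda)'}(0+)$; the $C^2$-requirement then yields $\Gamma(a,b) = 0$. Substituting $\Gamma(a,b) = 0$ into the evaluation of $v_{a,b}'(b)$ from \eqref{Eq: NPV derivative} and imposing $v_{a,b}'(b) = -K_p$ gives $\rho_a^{(q+\lambda)}(b;\tilde{f}') = K_p - K_c$, which is exactly $\gamma(a,b) = 0$. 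The main obstacle is this variation-dependent case split: the two clauses of Definition \ref{def: sufficiently smooth} must be handled separately, and particularly when $\sigma = 0$ and $X$ has unbounded variation, one must accommodate $W^{(q+\lambda)'}(0+) = +\infty$ and conclude $\Gamma(a,b) = 0$ from the finiteness of $v_{a,b}''(a+)$ itself.
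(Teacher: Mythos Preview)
Your proposal is correct and follows essentially the same route as the paper: the paper also derives (1)$\Leftrightarrow$(2) by successive elimination in \eqref{Eq: v^f' at a}--\eqref{Eq: v^f' at b}, and handles (2)$\Leftrightarrow$(3) by examining the behavior of $W^{(q+\lambda)}$ and $W^{(q+\lambda)'}$ at zero (citing Lemmas 3.1--3.2 of \cite{kuznetsov_theory_2013}) to equate sufficient smoothness at $a$ with $\Gamma(a,b)=0$, then reading $\gamma(a,b)=0$ off $v_{a,b}'(b)$. One minor correction: the determinant of your $2\times 2$ system is $1/q$, not a multiple of $Z^{(q+\lambda)}(b-a)$, but this does not affect the argument since it is still strictly positive.
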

\begin{proof}
Proof of (1) $\Longleftrightarrow$ (2): Suppose that (1) holds. From \eqref{Eq: v^f' at b}, we have 
\begin{equation}\label{Eq: iff misc 1}
    \frac{\Phi(q) \Gamma(a, b) - \lambda e^{-\Phi(q)(b - a)} \gamma(a, b)}{\Theta^{(q + \lambda)}(b - a, \Phi(q))} \frac{Z^{(q + \lambda)}(b - a)}{q} + \frac{\lambda + q}{q} \gamma(a, b) = 0.
\end{equation}
    Substituting \eqref{Eq: iff misc 1} into \eqref{Eq: v^f' at a} and using $v^{f'}_{a, b}(a) = -K_c$, we have
    \begin{equation}\label{Eq: iff misc 2}
       \frac{\Phi(q) \Gamma(a, b) - \lambda e^{-\Phi(q)(b - a)} \gamma(a, b)}{\Theta^{(q + \lambda)}(b - a, \Phi(q))} = 0.
    \end{equation}
    Substituting \eqref{Eq: iff misc 2} back into \eqref{Eq: iff misc 1}, we obtain $\gamma(a, b) = 0$. It remains to see from \eqref{Eq: iff misc 2} that if $\gamma(a, b) = 0$, then it must hold that $\Gamma(a, b) = 0$. Thus, (1) implies (2). The converse direction holds by substituting $\Gamma(a, b) = \gamma(a, b) = 0$ into \eqref{Eq: v^f' at a} and \eqref{Eq: v^f' at b}.
    
Proof of (2) $\Longleftrightarrow$ (3):
  Recall from Lemma \ref{Lemma: NPV derivatives} that $v_{a, b}$ is sufficiently smooth on $\mathbb{R}$ except possibly at $a$. Moreover, from \eqref{Eq: NPV derivative} with $x = a-$, we have $v'_{a, b}(a-) = -K_c$. Therefore, if $v'_{a, b}$ is continuous at $a$ then necessarily $v'_{a, b}(a) = -K_c$. Hence, it suffices to show that $v_{a, b}$ is sufficiently smooth at $a$ and that $v'_{a, b}(b) = -K_p$ together are equivalent to (2). 
    
In view of the behavior of $x \mapsto W^{(q)}(x)$ and $x \mapsto W^{(q)'}(x)$ at zero (see Lemmas 3.1 and 3.2 of \cite{kuznetsov_theory_2013}), $v_{a, b}$ is sufficiently smooth at $a$ if and only if $\Gamma(a, b) = 0$. Moreover, by \eqref{Eq: NPV derivative} with $x = b$ and \eqref{Eq: gamma}, we have
    \begin{align*}
        v_{a, b}'(b) &= -K_p + \gamma(a, b) + W^{(q + \lambda)}(b - a) \frac{\Gamma(a, b)}{Z^{(q + \lambda)}(b - a)}.
    \end{align*}
Thus, when $\Gamma(a, b) = 0$, $v_{a, b}'(b) = -K_p$ if and only if $\gamma(a, b) = 0$.
\end{proof}

In view of Proposition \ref{Prop: iff conditions}, we seek a pair $(a^*, b^*)$ satisfying the condition $\mathfrak{C}$: 
\begin{equation}\label{Eq: condition}
    \mathfrak{C}: \Gamma(a^*, b^*) = \gamma(a^*, b^*) = 0.
\end{equation}
In Sections \ref{Sect: existence} and \ref{Sect: optimality}, we establish the existence of $(a^*, b^*)$ and the optimality of $\pi^{a^*, b^*}$.

\section{Existence of candidate barriers}\label{Sect: existence}
In this section, we show that a pair of candidate barriers satisfying \eqref{Eq: condition} exists. In view of the expression of $\gamma$ in terms of the partial derivative of $\Gamma$ in \eqref{Eq: gamma}, we aim to find $(a^*,b^*)$ such that the curve $b \mapsto \Gamma(a^*, b)$ becomes tangent to the x-axis at $b^*$ (see Figure \ref{Fig: Gamma}).

As a first step towards proving the existence, we establish the starting and asymptotic values of $b \mapsto \Gamma(a, b)$. Recall from Assumption \ref{asm: f slope} that $-\infty < \bar{a} \coloneqq \inf\{a \in \mathbb{R}: \tilde{f}'(a) \geq 0\} < \bar{\bar{a}} \coloneqq \inf\{a \in \mathbb{R}: f'(a) + qK_p > 0\} < \infty$. The proof of Lemma \ref{Lemma: Gamma asymptotics} is presented in Appendix \ref{Appendix: proof of gamma asymptotics}. 
\begin{lemma} \label{Lemma: Gamma asymptotics} For $a \in \mathbb{R}$,
\begin{align}
    \Gamma_1(a) &\coloneqq \Gamma(a, a+) = \int_0^\infty e^{-\Phi(q)y} \tilde{f'}(y + a) \diff y + \frac{\lambda}{\Phi(q)} \left(K_p - K_c\right), \label{Eq: Gamma 1} \\
    \Gamma_2(a) &\coloneqq \lim_{b \to \infty} \frac{\Gamma(a, b)}{\Theta^{(q + \lambda)}(b - a, \Phi(q))} = \int_0^\infty e^{-\Phi(q + \lambda)y} \tilde{f}'(y + a)\, \diff y. \label{Eq: Gamma 2}
\end{align}
    Moreover, both $\Gamma_1$ and $\Gamma_2$ are continuous and strictly increasing on $(-\infty, \bar{\bar{a}}]$.
\end{lemma}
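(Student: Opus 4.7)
My approach is to evaluate both limits by direct manipulation of \eqref{Eq: Gamma} combined with the scale-function asymptotics \eqref{Eq: W, Z limits}, and then to derive continuity and strict monotonicity in $a$ from the convexity of $\tilde f$ together with Assumption \ref{asm: f slope}.

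For $\Gamma_1$, I would simply pass to $b \downarrow a$ in \eqref{Eq: Gamma}: the middle integral vanishes since its range of integration shrinks to zero, the factor $e^{-\Phi(q)(b-a)}$ tends to $1$, and the first term is unchanged, which gives the stated expression. For $\Gamma_2$, I would apply L'H\^opital's rule. First, $\Theta^{(q+\lambda)}(b-a,\Phi(q))\to\infty$ as $b\to\infty$ because, by \eqref{Eq: W, Z limits} applied to $W^{(q+\lambda)}$, this function grows like $e^{\Phi(q+\lambda)y}$ and $\Phi(q+\lambda)>\Phi(q)$ by strict monotonicity of $\psi$ on $[0,\infty)$. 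The derivative in $b$ of the denominator is $\lambda e^{-\Phi(q)(b-a)}W^{(q+\lambda)}(b-a)$, while that of the numerator is, by \eqref{Eq: partial Gamma}, $\lambda e^{-\Phi(q)(b-a)}[\rho_a^{(q+\lambda)}(b;\tilde f')+K_c-K_p]$. Cancelling the common prefactor and changing variable $z=a+y$ inside $\rho_a^{(q+\lambda)}(b;\tilde f')$, the ratio becomes
\[
\int_0^{b-a}\tilde f'(y+a)\,\frac{W^{(q+\lambda)}(b-a-y)}{W^{(q+\lambda)}(b-a)}\,\diff y+\frac{K_c-K_p}{W^{(q+\lambda)}(b-a)}.
\]
The second summand vanishes, and for the first, \eqref{Eq: W, Z limits} gives the pointwise limit $e^{-\Phi(q+\lambda)y}$ of the scale-function ratio; dominated convergence, justified by a $b$-uniform exponential majorant $C e^{-\alpha y}$ with $\alpha$ slightly less than $\Phi(q+\lambda)$, together with the polynomial growth of $\tilde f'$ inherited from Assumption \ref{asm: f convexity}, yields $\int_0^\infty\tilde f'(y+a)e^{-\Phi(q+\lambda)y}\,\diff y$.

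Continuity of both $\Gamma_1$ and $\Gamma_2$ in $a$ follows directly from dominated convergence applied to their defining integrals. For the strict monotonicity on $(-\infty,\bar{\bar a}]$, I would use that, since $\tilde f$ is convex, $\tilde f'$ is non-decreasing, so both $\Gamma_1$ and $\Gamma_2$ are non-decreasing in $a$. To upgrade to a strict inequality, fix $a_1<a_2\le\bar{\bar a}$ and consider the non-empty open interval $J\coloneqq(\max(0,\bar{\bar a}-a_2),\bar{\bar a}-a_1)$. For every $y\in J$ one has $y+a_1<\bar{\bar a}<y+a_2$, and the definition of $\bar{\bar a}$ together with monotonicity of $f'$ forces $f'(y+a_1)\le -qK_p<f'(y+a_2)$, so $\tilde f'(y+a_1)<\tilde f'(y+a_2)$ on a set of positive Lebesgue measure. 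Integrating this strict inequality against the strictly positive weights $e^{-\Phi(q)y}$ and $e^{-\Phi(q+\lambda)y}$ respectively gives $\Gamma_1(a_1)<\Gamma_1(a_2)$ and $\Gamma_2(a_1)<\Gamma_2(a_2)$.

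The main obstacle is the dominated-convergence step in the L'H\^opital argument for $\Gamma_2$: one needs a $b$-uniform bound on $W^{(q+\lambda)}(b-a-y)/W^{(q+\lambda)}(b-a)$ that is integrable against a polynomial in $y$. This is standard but rests on the sharp exponential asymptotic $W^{(q+\lambda)}(y)\sim e^{\Phi(q+\lambda)y}/\psi'(\Phi(q+\lambda))$, whose validity is secured by Assumption \ref{asm: on X} on the tail of the L\'evy measure. The remaining steps, in particular the identification of the interval $J$ for the monotonicity argument, are essentially routine.
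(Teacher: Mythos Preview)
Your proposal is correct and follows essentially the same approach as the paper: direct evaluation at $b\downarrow a$ for $\Gamma_1$, L'H\^opital's rule combined with \eqref{Eq: partial Gamma} and the scale-function ratio asymptotics for $\Gamma_2$, and convexity of $\tilde f$ together with Assumption~\ref{asm: f slope} for strict monotonicity. The only notable difference is that the paper observes the ratio $W^{(q+\lambda)}(b-a-y)/W^{(q+\lambda)}(b-a)$ is \emph{monotone increasing} in $b$ to its limit $e^{-\Phi(q+\lambda)y}$ (a consequence of log-concavity of $W^{(q+\lambda)}$), so the limit itself serves as the dominating function; this is slightly cleaner than constructing an auxiliary majorant $Ce^{-\alpha y}$ with $\alpha<\Phi(q+\lambda)$, but both routes are valid.
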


Define the left inverses of $\Gamma_1$ and $\Gamma_2$, 
\begin{align}\label{Eq: a_underline}
\begin{split}
    \underline{a}_i \coloneqq \inf\{a \in \mathbb{R}: \Gamma_i(a) \geq 0\}, \quad i = 1,2,
\end{split}
\end{align}
both of which exist under Assumption \ref{asm: f slope}. The following lemma establishes the upper bounds for $\underline{a}_1$ and $\underline{a}_2$; the proof is provided in Appendix \ref{Appendix: proof of bounds for a underline}. 
\begin{lemma}\label{Lemma: bounds for a underline}
    It holds that
    \[
    \underline{a}_1 \in (-\infty, \bar{\bar{a}}) \quad \textrm{and} \quad \underline{a}_2 \in (-\infty, \bar{a}).
    \]
\end{lemma}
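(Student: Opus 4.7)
The plan is to reduce both claims to sign checks for $\Gamma_1$ and $\Gamma_2$ at the endpoints of the intervals in question. Since Lemma~\ref{Lemma: Gamma asymptotics} gives that each $\Gamma_i$ is continuous and strictly increasing on $(-\infty, \bar{\bar{a}}]$, proving $\underline{a}_1 \in (-\infty, \bar{\bar{a}})$ amounts to producing some $a_- \in \mathbb{R}$ with $\Gamma_1(a_-) < 0$ together with the estimate $\Gamma_1(\bar{\bar{a}}) > 0$, and similarly $\underline{a}_2 \in (-\infty, \bar{a})$ follows from $\Gamma_2(a_-) < 0$ for $a_-$ small enough and $\Gamma_2(\bar{a}) > 0$.

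For the upper-bound estimate on $\Gamma_1$, I invoke Assumption~\ref{asm: f slope}(2): for $y > 0$ the inequality $y + \bar{\bar{a}} > \bar{\bar{a}}$ gives $f'(y + \bar{\bar{a}}) > -qK_p$, hence $\tilde{f}'(y + \bar{\bar{a}}) > (q+\lambda)(K_c - K_p)$ by \eqref{Eq: tilde f}. Integrating this strict inequality against $e^{-\Phi(q)y}$ on $(0, \infty)$ and adding the boundary term $\lambda(K_p - K_c)/\Phi(q)$ from the definition of $\Gamma_1$ yields $\Gamma_1(\bar{\bar{a}}) > q(K_c - K_p)/\Phi(q) > 0$. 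For $\Gamma_2(\bar{a})$, the definition of $\bar{a}$ and the monotonicity of $\tilde{f}'$ only yield $\tilde{f}'(y + \bar{a}) \geq 0$ for $y > 0$, hence $\Gamma_2(\bar{a}) \geq 0$, and I still need to rule out the degenerate case $\tilde{f}' \equiv 0$ on $[\bar{a}, \infty)$. If this were to hold, then $f'(x) = -(q+\lambda)K_c + \lambda K_p$ for $x > \bar{a}$, so $f'(x) + qK_p = -(q+\lambda)(K_c - K_p) < 0$ for every $x > \bar{a}$; combined with convexity of $f$ this forces $\bar{\bar{a}} = +\infty$, contradicting Assumption~\ref{asm: f slope}(2). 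Consequently $\tilde{f}'$ is strictly positive on a subset of $[\bar{a}, \infty)$ of positive Lebesgue measure, which makes the integrand in $\Gamma_2(\bar{a})$ strictly positive on that set and hence $\Gamma_2(\bar{a}) > 0$. I expect this degenerate-case exclusion to be the main obstacle of the argument.

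For the lower bounds, I exploit Assumption~\ref{asm: f slope}(1): $\bar{a} > -\infty$ forces $\tilde{f}'(z) < 0$ for all $z < \bar{a}$, and by the monotonicity of $\tilde{f}'$ the limit $\ell \coloneqq \tilde{f}'(-\infty) \in [-\infty, 0)$ is strictly negative. Letting $a \to -\infty$, a monotone/dominated convergence argument (justified by the polynomial growth of $f'$ afforded by Assumption~\ref{asm: f convexity}) allows me to pass the limit inside each integral, giving
\[
    \Gamma_1(a) \;\longrightarrow\; \frac{\ell}{\Phi(q)} + \frac{\lambda(K_p - K_c)}{\Phi(q)}, \qquad \Gamma_2(a) \;\longrightarrow\; \frac{\ell}{\Phi(q + \lambda)},
\]
with both limits lying in $[-\infty, 0)$. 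Hence $\Gamma_i(a) < 0$ for $a$ sufficiently negative, yielding $\underline{a}_i > -\infty$ and completing the proof.
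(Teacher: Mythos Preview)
Your proof is correct and follows essentially the same approach as the paper: both arguments establish the lower bounds via monotone convergence applied to the limit $\tilde{f}'(-\infty)\in[-\infty,0)$, and the upper bounds by checking $\Gamma_1(\bar{\bar{a}})>0$ and $\Gamma_2(\bar{a})>0$. The only cosmetic difference is that the paper obtains $\Gamma_2(\bar{a})>0$ directly from the observation $\tilde{f}'(x)>f'(x)+qK_p>0$ for $x\geq\bar{\bar{a}}$, whereas you reach the same conclusion by a short contradiction ruling out $\tilde{f}'\equiv 0$ on $[\bar{a},\infty)$; likewise, the paper first simplifies $\Gamma_1(a)=\int_0^\infty e^{-\Phi(q)y}(f'(y+a)+qK_c)\,\mathrm{d}y$ before evaluating at $\bar{\bar{a}}$, while you compute an explicit positive lower bound.
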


In view of the bounds on $\underline{a}_1$ and $\underline{a}_2$ and the strict monotonicity of $\Gamma_1$ and $\Gamma_2$ on $(-\infty, \bar{\bar{a}}]$, we have
\begin{equation}
    \Gamma(a, a+) > 0 \iff a > \underline{a}_1 \label{Gamma_b_start}
\end{equation}
and
\begin{align}
    \lim_{b \to \infty} \Gamma(a, b) = \infty \quad \text{(resp. $-\infty$)} \quad  \text{if $a > \underline{a}_2$} \quad \text{(resp. $a < \underline{a}_2$)}. \label{Gamma_b_asymp}
\end{align}

Denote the supremum of $\Gamma$ with respect to the second argument by
\[
\overline{\Gamma}(a) \coloneqq \sup_{b > a} \Gamma(a, b), \quad a \in \mathbb{R}.
\] 
The function $a \mapsto \overline{\Gamma}(a)$ can be shown to be continuous and strictly monotone on $(-\infty, \underline{a}_2)$. First, we establish the following lemma; its proof is deferred to Appendix \ref{Appendix: proof of rho monotone}.

\begin{lemma}\label{Lemma: rho monotone}
    For $a < \underline{a}_2$, the function $b \mapsto \rho_{a}^{(q + \lambda)}(b; \tilde{f}')$ is strictly decreasing and negative on $(a, \infty)$. Moreover, the function $b \mapsto \rho_{\underline{a}_2}^{(q + \lambda)}(b; \tilde{f}')$ is non-increasing and non-positive on $(\underline{a}_2, \infty)$.
\end{lemma}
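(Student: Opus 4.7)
My plan is to reduce the monotonicity of $b \mapsto \rho_a^{(q+\lambda)}(b; \tilde{f}')$ to the sign of $\Gamma_2(a)$ by extracting the exponential growth of $W^{(q+\lambda)}$ via an Esscher transform at rate $\phi := \Phi(q+\lambda)$. First I would use the convexity of $\tilde{f}$ to write $\tilde{f}'(y) = \tilde{f}'(a) + \nu((a, y])$ for $y \geq a$, where $\nu$ denotes the non-negative Lebesgue--Stieltjes measure of the non-decreasing function $\tilde{f}'$. Substituting into the definition of $\rho_a^{(q+\lambda)}(b; \tilde{f}')$ and applying Fubini gives
\[
\rho_a^{(q+\lambda)}(b; \tilde{f}') = \tilde{f}'(a) \overline{W}^{(q+\lambda)}(b-a) + \int_{(a, b]} \overline{W}^{(q+\lambda)}(b-z) \, \nu(\diff z),
\]
so that differentiating in $b$ (using $\overline{W}^{(q+\lambda)}(0) = 0$) yields
\[
\partial_b \rho_a^{(q+\lambda)}(b; \tilde{f}') = \tilde{f}'(a) W^{(q+\lambda)}(b-a) + \int_{(a, b]} W^{(q+\lambda)}(b-z) \, \nu(\diff z).
\]

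The core step is to bound this derivative above by $\phi \Gamma_2(a) W^{(q+\lambda)}(b-a)$. For this I would invoke the standard fact that $W_\phi(x) := e^{-\phi x} W^{(q+\lambda)}(x)$ is the $0$-scale function of the process under the Esscher change of measure at rate $\phi$, and hence is non-decreasing in $x \geq 0$. This yields the ratio bound
\[
\frac{W^{(q+\lambda)}(b-z)}{W^{(q+\lambda)}(b-a)} = e^{-\phi(z-a)} \frac{W_\phi(b-z)}{W_\phi(b-a)} \leq e^{-\phi(z-a)}, \quad z \in (a, b].
\]
Inserting this inside the $\nu$-integral and enlarging the domain from $(a, b]$ to $(a, \infty)$ (permissible because both $\nu(\diff z)$ and $e^{-\phi(z-a)}$ are non-negative) gives
\[
\partial_b \rho_a^{(q+\lambda)}(b; \tilde{f}') \leq W^{(q+\lambda)}(b-a) \left[\tilde{f}'(a) + \int_{(a, \infty)} e^{-\phi(z-a)} \, \nu(\diff z)\right].
\]
A Fubini-based integration by parts applied to $\Gamma_2(a) = \int_0^\infty e^{-\phi y} \tilde{f}'(y+a) \, \diff y$---valid because $\tilde{f}'$ has polynomial growth and $\phi > 0$---identifies the bracketed quantity as precisely $\phi \Gamma_2(a)$.

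Both conclusions then follow at once. For $a < \underline{a}_2$, Lemma \ref{Lemma: Gamma asymptotics} together with the definition of $\underline{a}_2$ gives $\Gamma_2(a) < 0$, and since $W^{(q+\lambda)}(b-a) > 0$ for $b > a$, the derivative is strictly negative; combined with $\rho_a^{(q+\lambda)}(a; \tilde{f}') = 0$, this yields both the strict decrease and the strict negativity on $(a, \infty)$. For $a = \underline{a}_2$, continuity and strict monotonicity of $\Gamma_2$ on $(-\infty, \bar{\bar{a}}]$ force $\Gamma_2(\underline{a}_2) = 0$, so the bound reduces to $\partial_b \rho \leq 0$, producing non-increasing and non-positive behavior on $(\underline{a}_2, \infty)$.

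The main technical subtlety I anticipate is performing the Stieltjes manipulations cleanly under only piecewise $C^1$ regularity of $f$, so that $\nu$ may carry atoms at the kinks of $\tilde{f}$; this requires keeping track of boundary terms carefully, but nothing substantive changes. Conceptually, the Esscher identity is the mechanism that converts the global tail information encoded in $\Gamma_2(a)$---an exponentially weighted integral of $\tilde{f}'$---into the pointwise control on $\partial_b \rho$ required here, making $a < \underline{a}_2$ the natural threshold for the strict version of the statement.
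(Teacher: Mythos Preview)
Your argument is correct and noticeably cleaner than the paper's. Both proofs start from the same derivative representation
\[
\partial_b \rho_a^{(q+\lambda)}(b;\tilde{f}') \;=\; \tilde{f}'(a)\,W^{(q+\lambda)}(b-a) + \int_{(a,b]} W^{(q+\lambda)}(b-z)\,\nu(\diff z),
\]
but then diverge. The paper shows that $b \mapsto \partial_b \rho_a^{(q+\lambda)}(b;\tilde{f}')/W^{(q+\lambda)}(b-a)$ is non-decreasing (using $\partial_b[W^{(q+\lambda)}(b-y)/W^{(q+\lambda)}(b-a)]\ge 0$), and then runs a trichotomy on the sign of $\partial_b\rho$: case (iii) is ruled out via $\tilde{f}'(a)<0$, case (i) is ruled out by contradiction with the asymptotics \eqref{Gamma_b_asymp}, and strictness requires a further contradiction argument. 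Your approach bypasses all of this by using the sharper pointwise Esscher bound $W^{(q+\lambda)}(b-z)/W^{(q+\lambda)}(b-a)\le e^{-\Phi(q+\lambda)(z-a)}$ (which is in fact the limiting value of the paper's monotone ratio as $b\to\infty$), immediately yielding $\partial_b\rho \le \Phi(q+\lambda)\,\Gamma_2(a)\,W^{(q+\lambda)}(b-a)$. The sign then falls out of Lemma~\ref{Lemma: Gamma asymptotics} without any case analysis, and the role of $\underline{a}_2$ as the zero of $\Gamma_2$ becomes transparent. The price you pay is the need to justify the Esscher monotonicity of $W_\phi$ and the Fubini identification $\tilde{f}'(a)+\int_{(a,\infty)}e^{-\phi(z-a)}\nu(\diff z)=\phi\Gamma_2(a)$; both are standard, and your remark about tracking atoms of $\nu$ under piecewise $C^1$ regularity is the right caveat.
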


Combining Lemma \ref{Lemma: rho monotone} with \eqref{Eq: partial Gamma} and \eqref{Gamma_b_asymp} yields the following corollary.
\begin{corollary}\label{Corollary: critical point}
    For $a' < \underline{a}_2$, the maximum of $b \mapsto \Gamma(a', b)$ is attained at exactly one $b' > a'$ with $\frac{\partial }{\partial b} \Gamma(a', b') = 0$. In particular, it holds that  
    \begin{equation}\label{Eq: uniqueness misc 1}
        \rho_{a'}^{(q + \lambda)}(b'; \tilde{f}') = K_p - K_c \quad \text{and} \quad \gamma(a', b') = -\rho_{a'}^{(q + \lambda)}(b'; \tilde{f}') + K_p - K_c = 0.
    \end{equation}
\end{corollary}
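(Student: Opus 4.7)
The plan is to show that the unique critical point of $b \mapsto \Gamma(a',b)$ arises from the sign change of its partial derivative, using the formula for $\frac{\partial}{\partial b}\Gamma(a',b)$ in \eqref{Eq: partial Gamma} together with the monotonicity of $\rho$ furnished by Lemma \ref{Lemma: rho monotone}. Once the critical point is located, the equalities in \eqref{Eq: uniqueness misc 1} follow mechanically from the definition of $\gamma$ in \eqref{Eq: gamma}.

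More precisely, I would first rewrite \eqref{Eq: partial Gamma} as
\[
\frac{\partial}{\partial b}\Gamma(a',b)=\lambda e^{-\Phi(q)(b-a')}\bigl(\rho_{a'}^{(q+\lambda)}(b;\tilde{f}')-(K_p-K_c)\bigr),
\]
so the sign of the derivative is governed by the sign of $\rho_{a'}^{(q+\lambda)}(b;\tilde{f}')-(K_p-K_c)$. At $b=a'+$ the integral defining $\rho_{a'}^{(q+\lambda)}(\cdot;\tilde{f}')$ vanishes, so this quantity equals $-(K_p-K_c)=K_c-K_p>0$ by \eqref{Eq: C}, showing that $b\mapsto \Gamma(a',b)$ is strictly increasing immediately to the right of $a'$.

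Next I would invoke Lemma \ref{Lemma: rho monotone}: since $a'<\underline{a}_2$, the map $b\mapsto \rho_{a'}^{(q+\lambda)}(b;\tilde{f}')$ is continuous and strictly decreasing on $(a',\infty)$. Combined with \eqref{Gamma_b_asymp}, which gives $\lim_{b\to\infty}\Gamma(a',b)=-\infty$, the derivative $\tfrac{\partial}{\partial b}\Gamma(a',\cdot)$ cannot remain nonnegative on all of $(a',\infty)$; hence $\rho_{a'}^{(q+\lambda)}(b;\tilde{f}')$ must eventually fall below $K_p-K_c$. By the intermediate value theorem and strict monotonicity, there is exactly one $b'>a'$ with $\rho_{a'}^{(q+\lambda)}(b';\tilde{f}')=K_p-K_c$, and the derivative $\tfrac{\partial}{\partial b}\Gamma(a',\cdot)$ is positive on $(a',b')$ and negative on $(b',\infty)$. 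Therefore $b'$ is the unique global maximizer of $b\mapsto \Gamma(a',b)$ and the unique zero of $\tfrac{\partial}{\partial b}\Gamma(a',\cdot)$. Finally, substituting $\rho_{a'}^{(q+\lambda)}(b';\tilde{f}')=K_p-K_c$ into the definition \eqref{Eq: gamma} of $\gamma$ gives $\gamma(a',b')=0$, completing the claim.

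The main subtlety (rather than difficulty) is ensuring that the strict-decrease conclusion of Lemma \ref{Lemma: rho monotone} is genuinely applicable here — i.e., that we are in the regime $a'<\underline{a}_2$ rather than the boundary case $a'=\underline{a}_2$ (which only yields non-strict monotonicity and non-positivity). This strictness is what converts ``a maximum exists'' into ``the maximum is attained at exactly one point,'' and it is what forces $\rho$ to cross the level $K_p-K_c<0$ rather than merely approach it. Everything else is a direct reading off of the formulas already established.
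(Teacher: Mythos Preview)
Your proposal is correct and follows essentially the same approach as the paper, which simply states that the corollary follows by combining Lemma \ref{Lemma: rho monotone} with \eqref{Eq: partial Gamma} and \eqref{Gamma_b_asymp}. You have correctly filled in the details: the strict monotonicity of $\rho_{a'}^{(q+\lambda)}(\cdot;\tilde f')$ from Lemma \ref{Lemma: rho monotone}, the sign analysis of the derivative via \eqref{Eq: partial Gamma}, and the use of \eqref{Gamma_b_asymp} to force the derivative to change sign, together yielding a unique critical point at which \eqref{Eq: uniqueness misc 1} holds by \eqref{Eq: gamma}.
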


Combining Lemma \ref{Lemma: rho monotone} with Corollary \ref{Corollary: critical point} gives the following proposition.
\begin{proposition} \label{Prop: monotonicity}
    The function $a \mapsto \overline{\Gamma}(a)$ is continuous and strictly increasing on $(-\infty, \underline{a}_2)$. 
\end{proposition}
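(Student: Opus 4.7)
The plan is to exploit a change of variables that makes the $a$-dependence of $\Gamma(a, b)$ transparent, after which monotonicity reduces to a pointwise comparison between shifted copies of $\tilde{f}'$. Substituting $z = a + u$ in the inner integral of $\rho_a^{(q+\lambda)}(y+a;\tilde{f}')$ and setting $c := b - a$, I rewrite the total cost as $\Psi(a,c) := \Gamma(a, a+c)$, where
\[
\Psi(a,c) = \int_0^\infty e^{-\Phi(q)y}\tilde{f}'(y+a)\,\diff y + \lambda \int_0^c e^{-\Phi(q)y}\int_0^y \tilde{f}'(a+u)W^{(q+\lambda)}(y-u)\,\diff u\,\diff y + \frac{\lambda}{\Phi(q)}e^{-\Phi(q)c}(K_p - K_c).
\]
In this representation, $a$ enters only through the shifted argument of $\tilde{f}'$.

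For strict monotonicity on $(-\infty, \underline{a}_2)$, fix $a_1 < a_2 < \underline{a}_2$. Since $\tilde{f}$ is convex, $\tilde{f}'$ is non-decreasing, so $\tilde{f}'(y+a_1) \le \tilde{f}'(y+a_2)$ pointwise; together with $e^{-\Phi(q)y}, W^{(q+\lambda)} \ge 0$, this already gives $\Psi(a_1, c) \le \Psi(a_2, c)$ for every $c > 0$. To upgrade the leading term to a strict inequality, I invoke Lemma \ref{Lemma: bounds for a underline}, which yields $a_2 < \underline{a}_2 < \bar{a}$. The interval $(\bar{a} - a_2, \bar{a} - a_1)$ then lies inside $(0, \infty)$ and has Lebesgue measure $a_2 - a_1 > 0$; for each $y$ in it, $y + a_1 < \bar{a} < y + a_2$, which by definition of $\bar{a}$ forces $\tilde{f}'(y+a_1) < 0 \le \tilde{f}'(y+a_2)$. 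Hence $\int_0^\infty e^{-\Phi(q)y}[\tilde{f}'(y+a_2) - \tilde{f}'(y+a_1)]\,\diff y > 0$, so $\Psi(a_1, c) < \Psi(a_2, c)$ for every $c > 0$.

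Let $c_1^{*} := b^*(a_1) - a_1$ denote the unique maximizer of $\Psi(a_1, \cdot)$ furnished by Corollary \ref{Corollary: critical point}. Then
\[
\overline{\Gamma}(a_1) = \Psi(a_1, c_1^{*}) < \Psi(a_2, c_1^{*}) \le \sup_{c > 0}\Psi(a_2, c) = \overline{\Gamma}(a_2),
\]
giving strict monotonicity. For continuity, the maximizer $b^*(a)$ is implicitly determined by $\rho_a^{(q+\lambda)}(b^*(a);\tilde{f}') = K_p - K_c$ (Corollary \ref{Corollary: critical point}). Since this map is jointly continuous in $(a, b)$ and strictly decreasing in $b$ on $(a, \infty)$ for every $a < \underline{a}_2$ (Lemma \ref{Lemma: rho monotone}), a standard two-sided squeeze argument---bracketing $K_p - K_c$ between $\rho_a^{(q+\lambda)}(b^*(a_0) \pm \epsilon; \tilde{f}')$ and using joint continuity---yields continuity of $a \mapsto b^*(a)$ on $(-\infty, \underline{a}_2)$. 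Then $\overline{\Gamma}(a) = \Gamma(a, b^*(a))$ is continuous as a composition of continuous functions.

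The main obstacle is the strict-versus-weak inequality gap: only the first, unrestricted integral in $\Psi(a, c)$ can be readily shown to be strictly increasing in $a$, since the remaining term involves $\tilde{f}'$ over bounded windows, where $\tilde{f}'$ could in principle remain negative or constant under small shifts. The decisive input is the sharp bound $\underline{a}_2 < \bar{a}$ from Lemma \ref{Lemma: bounds for a underline}, which guarantees that shifting $a$ within $(-\infty, \underline{a}_2)$ actually crosses the sign change of $\tilde{f}'$ at $\bar{a}$ and produces a strict pointwise comparison of $\tilde{f}'(\cdot + a)$ on a subinterval of $(0, \infty)$ of positive Lebesgue measure.
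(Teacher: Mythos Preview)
Your proof is correct and rests on the same core observation as the paper's: after the change of variables $c = b - a$, the $a$-dependence of $\Gamma(a,a+c)$ enters solely through the shifted argument of $\tilde{f}'$, so convexity of $\tilde{f}$ gives the weak inequality and the crossing at $\bar{a}$ (available because $\underline{a}_2 < \bar{a}$ by Lemma~\ref{Lemma: bounds for a underline}) supplies strictness in the leading integral.

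The execution differs in a useful way. The paper compares $\Gamma(a'',b'')$ with $\Gamma(a',b')$ at the \emph{respective} maximizers, which forces it to first establish $b'' - a'' \le b' - a'$ and then split the middle integral over two different ranges. You instead fix $c = c_1^\ast$, compare $\Psi(a_1,c_1^\ast) < \Psi(a_2,c_1^\ast)$, and bound the latter by the supremum---the standard ``evaluate at the other maximizer'' trick for suprema. This sidesteps the intermediate inequality on the maximizers and yields a shorter argument. Your treatment of continuity via continuity of $a \mapsto b^\ast(a)$ is also more explicit than the paper's, which simply remarks that continuity ``will become apparent in the process.'' Both routes ultimately rely on Lemma~\ref{Lemma: rho monotone} and Corollary~\ref{Corollary: critical point} to guarantee that the maximizer exists and is characterized by $\rho_a^{(q+\lambda)}(b^\ast(a);\tilde{f}') = K_p - K_c$.
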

\begin{proof}
    We will show that the function is strictly increasing, and its continuity will become apparent in the process. For $(a', b')$ and $(a'', b'')$ satisfying $a'' < a' < \underline{a}_2$, $\overline{\Gamma}(a') = \Gamma(a', b')$, and $\overline{\Gamma}(a'') = \Gamma(a'', b'')$, it suffices to show that $\Gamma(a'', b'') < \Gamma(a', b')$. Rewriting $\Gamma$ as given in \eqref{Eq: Gamma}, we obtain
    \begin{equation}\label{Eq: Gamma alt form}
        \Gamma(a, b) = \int_0^\infty e^{-\Phi(q)y} \tilde{f'}(y + a) \diff y + \lambda \int_0^{b - a} e^{-\Phi(q)y} (\rho_a^{(q + \lambda)}(y + a; \tilde{f}') + K_c - K_p)\, \diff y - \frac{\lambda}{\Phi(q)} (K_c - K_p). 
    \end{equation}
    For $y > 0$, by the convexity of $\tilde{f}$ and the fact that $a'' < a'$,
    \begin{equation*}
        \rho_{a''}^{(q + \lambda)}(y + a''; \tilde{f}') = \int_{0}^{y} \tilde{f}'(z + a'') W^{(q + \lambda)}(y - z) \, \diff z \leq \int_{0}^{y} \tilde{f}'(z + a') W^{(q + \lambda)}(y - z) \, \diff z = \rho_{a'}^{(q + \lambda)}(y + a'; \tilde{f}') \leq 0.
    \end{equation*}
    Hence, $y \mapsto \rho_{a''}^{(q + \lambda)}(y + a''; \tilde{f}')$ lower bounds $y \mapsto \rho_{a'}^{(q + \lambda)}(y + a'; \tilde{f}')$ on $(0, \infty)$. This observation, together with the fact that $\rho_{a'}^{(q + \lambda)}(b'; \tilde{f}') = \rho_{a''}^{(q + \lambda)}(b''; \tilde{f}') = K_p - K_c < 0$ as shown in \eqref{Eq: uniqueness misc 1}, yields $b'' - a'' \leq b' - a'$. Moreover, since
    \begin{equation*}
        0 \leq \rho_{a''}^{(q + \lambda)}(y + a''; \tilde{f}') + K_c - K_p \leq \rho_{a'}^{(q + \lambda)}(y + a'; \tilde{f}') + K_c - K_p, \quad 0 < y < b'' - a'',
    \end{equation*}
    and 
    \begin{equation*}
        0 \leq \rho_{a'}^{(q + \lambda)}(y + a'; \tilde{f}') + K_c - K_p, \quad b'' - a'' < y < b' - a',
    \end{equation*}
    we obtain
    \begin{equation*} 
        \lambda\int_0^{b'' - a''} e^{-\Phi(q)y} (\rho_{a''}^{(q + \lambda)}(y + a''; \tilde{f}') + K_c - K_p)\, \diff y \leq \lambda\int_0^{b' - a'} e^{-\Phi(q)y} (\rho_{a'}^{(q + \lambda)}(y + a'; \tilde{f}') + K_c - K_p)\, \diff y. 
    \end{equation*}
    Finally, by the convexity of $\tilde{f}$ and the existence of $\bar{a} > \underline{a}_2$ (see Lemma \ref{Lemma: bounds for a underline}) such that $\tilde{f}'(\bar{a}) > \tilde{f}'(a') \geq \tilde{f}'(a'')$, the following inequality holds strictly, $\int_0^\infty e^{-\Phi(q)y} \tilde{f'}(y + a'') \, \diff y < \int_0^\infty e^{-\Phi(q)y} \tilde{f'}(y + a') \, \diff y$. Substituting these inequalities into \eqref{Eq: Gamma alt form}, we obtain $\Gamma(a'', b'') < \Gamma(a', b')$.  
\end{proof}

The rest of this section is devoted to proving the following theorem.
\begin{theorem} \label{Thm: existence uniqueness}
    There exists a pair $(a^*, b^*)$ satisfying $\mathfrak{C}$.
\end{theorem}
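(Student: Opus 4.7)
The plan is to apply the intermediate value theorem to the continuous strictly increasing function $\overline{\Gamma}$ from Proposition \ref{Prop: monotonicity}. By Corollary \ref{Corollary: critical point}, for each $a < \underline{a}_2$ the map $b \mapsto \Gamma(a, b)$ has a unique maximizer $b(a)$, at which $\gamma(a, b(a)) = 0$ holds automatically (since $\partial_b \Gamma(a, b) = -\lambda e^{-\Phi(q)(b-a)}\gamma(a, b)$). Thus $\overline{\Gamma}(a) = \Gamma(a, b(a))$ and any zero $a^*$ of $\overline{\Gamma}$ yields the pair $(a^*, b(a^*))$ satisfying $\mathfrak{C}$.

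For the lower end, Lemma \ref{Lemma: rho monotone} gives $\rho_a^{(q+\lambda)}(\cdot+a; \tilde{f}') \leq 0$ for $a < \underline{a}_2$, and since $K_p - K_c < 0$, the definition \eqref{Eq: Gamma} immediately yields $\overline{\Gamma}(a) \leq \int_0^\infty e^{-\Phi(q)y}\tilde{f}'(y+a)\,\diff y$. Under Assumption \ref{asm: f slope}(1), convexity of $\tilde{f}$ together with finiteness of $\bar{a}$ forces $\lim_{a \to -\infty}\tilde{f}'(a)$ to lie in $[-\infty, 0)$ strictly; monotone convergence drives the upper bound to this strictly negative limit, so $\overline{\Gamma}(a) < 0$ for $a$ sufficiently negative.

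The hard part is showing $\overline{\Gamma}(\underline{a}_2) > 0$. I would start from the rewriting
\[
\Gamma(a, b) = e^{-\Phi(q)(b - a)}\Gamma_1(b) + \int_0^{b - a} e^{-\Phi(q)y}\bigl[\tilde{f}'(y + a) + \lambda\,\rho_a^{(q+\lambda)}(y + a; \tilde{f}')\bigr]\,\diff y,
\]
obtained from \eqref{Eq: Gamma} by splitting $\int_0^\infty e^{-\Phi(q)y}\tilde{f}'(y+a)\,\diff y$ at $y = b-a$ and noting the cancellation of the boundary constants. Using the convolution structure $\rho = W^{(q+\lambda)} * \tilde{f}'$ together with the Laplace identity $\int_0^\infty e^{-sy}W^{(q+\lambda)}(y)\,\diff y = 1/(\psi(s)-(q+\lambda))$, valid for $s > \Phi(q+\lambda)$, I would derive
\[
\int_0^\infty e^{-sy}\bigl[\tilde{f}'(y+\underline{a}_2) + \lambda\,\rho_{\underline{a}_2}^{(q+\lambda)}(y+\underline{a}_2; \tilde{f}')\bigr]\,\diff y = \frac{\psi(s) - q}{\psi(s) - (q + \lambda)}\int_0^\infty e^{-sy}\tilde{f}'(y+\underline{a}_2)\,\diff y,
\]
which extends by analytic continuation to all $s > 0$: the apparent pole at $\Phi(q+\lambda)$ is removable because $\Gamma_2(\underline{a}_2) = 0$. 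Evaluating at $s = \Phi(q)$ kills the numerator $\psi(s) - q$, so the integral on the left vanishes; combined with the exponential decay of the first piece in the rewriting, this gives $\lim_{b \to \infty}\Gamma(\underline{a}_2, b) = 0$. Since $\rho_{\underline{a}_2}^{(q+\lambda)}(b; \tilde{f}') \to -\infty$ as $b \to \infty$ (polynomial growth of $\tilde{f}'$ picked up through the sub-leading tail of $W^{(q+\lambda)}$), the function $\Gamma(\underline{a}_2, \cdot)$ is strictly decreasing past some $b^*$; a local maximum equal to the vanishing limit at infinity would violate this strict decrease, forcing $\overline{\Gamma}(\underline{a}_2) > 0$. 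By continuity of $\overline{\Gamma}$, positivity extends to a left neighborhood of $\underline{a}_2$.

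Combining the two signs with strict monotonicity, the intermediate value theorem produces a unique $a^* \in (-\infty, \underline{a}_2)$ with $\overline{\Gamma}(a^*) = 0$; setting $b^* = b(a^*)$ gives the desired pair. The main obstacle is the third paragraph: justifying the analytic continuation of the convolution-Laplace identity across $s = \Phi(q+\lambda)$ (enabled precisely by $\Gamma_2(\underline{a}_2) = 0$), and establishing the tail behavior $\rho_{\underline{a}_2}^{(q+\lambda)}(b; \tilde{f}') \to -\infty$ in sufficient generality to guarantee the strict-decrease argument for a general spectrally negative L\'evy $X$.
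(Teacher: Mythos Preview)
Your overall strategy and the lower-end argument are correct and match the paper's: reduce to a sign change of $\overline{\Gamma}$ on $(-\infty,\underline{a}_2)$ via Proposition~\ref{Prop: monotonicity} and Corollary~\ref{Corollary: critical point}, with negativity for small $a$ coming from the bound $\overline{\Gamma}(a)\le\int_0^\infty e^{-\Phi(q)y}\tilde{f}'(y+a)\,\diff y$ (the paper phrases this via the explicit level $a^\dagger$ of \eqref{Eq: a tilde}).

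The genuine gap is the upper end. Two of your claims can fail. First, $\rho_{\underline{a}_2}^{(q+\lambda)}(b;\tilde{f}')\to-\infty$ need not hold; the paper explicitly treats the finite-limit situation (its case~\textbf{(1)}), and then your ``strictly decreasing past some $b^*$'' step collapses. Second, the analytic-continuation identity only yields $\int_0^\infty e^{-\Phi(q)y}G(y)\,\diff y=0$ \emph{if that integral converges}; when $\Gamma(\underline{a}_2,\infty)=-\infty$ (the paper's case~\textbf{(3)}) it does not, and the conclusion $\Gamma(\underline{a}_2,\infty)=0$ is simply false. Even where your Laplace step can be made rigorous and does give $\Gamma(\underline{a}_2,\infty)=0$, positivity of $\overline{\Gamma}(\underline{a}_2)$ still fails in the sub-case $\rho_{\underline{a}_2}^{(q+\lambda)}(\infty;\tilde{f}')\in[K_p-K_c,0]$: by \eqref{Eq: partial Gamma} one then has $\partial_b\Gamma(\underline{a}_2,\cdot)\ge0$ everywhere, forcing $\Gamma(\underline{a}_2,b)\le0$ for all $b$. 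The paper eliminates both this sub-case (its case~(ii)-(B)) and case~\textbf{(3)} through the inequality \eqref{Eq: second case misc 0}, which is precisely where Assumption~\ref{asm: f slope}(2) enters; you never invoke it, and without it the argument cannot be closed. The paper also bypasses all of this when $\underline{a}_1\le\underline{a}_2$ by working at $\underline{a}_1$ instead: $\Gamma(\underline{a}_1,\underline{a}_1+)=0$ together with $\partial_b\Gamma(\underline{a}_1,\underline{a}_1+)=\lambda(K_c-K_p)>0$ immediately gives $\overline{\Gamma}(\underline{a}_1-)>0$.
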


In view of Corollary \ref{Corollary: critical point}, it suffices to find some $a^* < \underline{a}_2$ such that $\overline{\Gamma}(a^*) = 0$. With $a\mapsto\overline{\Gamma}(a)$ strictly increasing on $(-\infty,\underline{a}_2)$ (Proposition \ref{Prop: monotonicity}), $a^*$, if it exists, is found by starting at $a=\underline{a}_2$ and decreasing $a$. See Figure \ref{Fig: Gamma} for an illustration of this procedure.

We now prove the existence of $a^*$ for the case $\underline{a}_1 \leq \underline{a}_2$, and subsequently for the case $\underline{a}_1 > \underline{a}_2$.

\subsection{Proof of existence for the case $\underline{a}_1 \leq \underline{a}_2$.} \label{Sect: case 1}
By the definition of $\underline{a}_1$ as in \eqref{Eq: a_underline}, it holds that $\Gamma(\underline{a}_1, \underline{a}_1+) = 0$. By \eqref{Eq: partial Gamma}, we have $\frac{\partial }{\partial b} \Gamma(\underline{a}_1, \underline{a}_1+) = \lambda(K_c - K_p) > 0$. Hence, $\overline{\Gamma}(\underline{a}_1) > 0$ and $\overline{\Gamma}(\underline{a}_1-) > 0$. Indeed, if $\underline{a}_1 < \underline{a}_2$, then $\overline{\Gamma}(\underline{a}_1-) > 0$ follows from $\overline{\Gamma}(\underline{a}_1) > 0$ together with Proposition \ref{Prop: monotonicity}. If $\underline{a}_1 = \underline{a}_2$, the claim $\overline{\Gamma}(\underline{a}_1-) > 0$ follows from $\overline{\Gamma}(\underline{a}_1) > 0$ and the continuity of $a \mapsto \Gamma(a, b)$ on $(-\infty, b)$ for every $b \in \mathbb{R}$.

Define $a^\dagger$ as follows, whose existence is guaranteed by Assumption \ref{asm: f slope}(1):
\begin{equation}\label{Eq: a tilde}
     a^\dagger \coloneqq \inf \left\{a\in \mathbb{R}: \int_0^\infty e^{-\Phi(q)y} \tilde{f'}(y + a) \, \diff y \geq 0\right\} \in \mathbb{R}.
\end{equation}
Substituting $\int_0^\infty e^{-\Phi(q)y} \tilde{f}'(y + a^\dagger) \, \diff y = 0$ into \eqref{Eq: Gamma}, for $b > a^\dagger$,
\begin{equation}
    \Gamma(a^\dagger, b) =  \lambda \int_0^{b - a^\dagger} e^{-\Phi(q)y} \rho_{a^\dagger}^{(q + \lambda)}(y + a^\dagger; \tilde{f}') \, \diff y - \frac{\lambda}{\Phi(q)} e^{-\Phi(q)(b - a^\dagger)} \left(K_c - K_p\right).  \label{Gamma_tilde_a}
\end{equation}
Since $K_p < K_c$, we have $a^\dagger < \underline{a}_1 (\leq \underline{a}_2$). This observation, combined with \eqref{Gamma_tilde_a} and Lemma \ref{Lemma: rho monotone}, implies that $\Gamma(a^\dagger, b) < 0$ for all $b > a^\dagger$. Moreover, Equation \eqref{Gamma_b_asymp} yields $\lim_{b \to \infty}\Gamma(a^\dagger, b) = - \infty$. Hence, we conclude that
$\overline{\Gamma}(a^\dagger) < 0$.

Given $\overline{\Gamma}(\underline{a}_1-) > 0$ and $\overline{\Gamma}(a^\dagger) < 0$, the continuity of $\overline{\Gamma}$ on $[a^\dagger, \underline{a}_1) \subset (-\infty, \underline{a}_2)$ guarantees the existence of some $a^\dagger < a^* < \underline{a}_1$ such that the function $b \mapsto \Gamma(a^*, b)$ starts at $\Gamma(a^*, a^*+) < 0$ and maximizes at $\Gamma(a^*, b^*) = 0$. This implies that both $\Gamma(a^*, b^*)$ and $\gamma(a^*, b^*)$ are zero.

\subsection{Proof of existence for the case $\underline{a}_1 > \underline{a}_2$}\label{Sect: case 2}
Now, consider $\underline{a}_2 < \underline{a}_1$. We have $\Gamma(\underline{a}_2, \underline{a}_2+) < 0$ by \eqref{Gamma_b_start}. This implies that there are two potential cases to investigate: either $b \mapsto \Gamma(\underline{a}_2, b)$ upcrosses zero at some $b > \underline{a}_2$, or $b \mapsto \Gamma(\underline{a}_2, b)$ is uniformly non-positive. We show that the latter case cannot happen.

In view of the non-positivity and monotonicity of $b \mapsto \rho_{\underline{a}_2}^{(q + \lambda)}(b; \tilde{f}')$ (Lemma \ref{Lemma: rho monotone}), we have
\begin{equation}\label{Eq: rho limit}
    \rho_{\underline{a}_2}^{(q + \lambda)}(\infty; \tilde{f}') \coloneqq \lim_{b \to \infty} \rho_{\underline{a}_2}^{(q + \lambda)}(b; \tilde{f}') \in [-\infty, 0].
\end{equation} 
Moreover, in view of \eqref{Eq: Gamma}, the non-positivity and monotonicity of $b \mapsto \rho_{\underline{a}_2}^{(q + \lambda)}(b; \tilde{f}')$ further imply
\begin{equation}\label{Eq: Gamma limit}
    \Gamma(\underline{a}_2, \infty) \coloneqq \lim_{b \to \infty} \Gamma(\underline{a}_2, b) \in [-\infty, \infty).
\end{equation} 

To show $b \mapsto \Gamma(\underline{a}_2, b)$ upcrosses zero, we consider the following three (exhaustive) cases:
    \begin{enumerate}
        \item [\textbf{(1)}] $\rho_{\underline{a}_2}^{(q + \lambda)}(\infty; \tilde{f}') > -\infty$. 
        \item [\textbf{(2)}] $\rho_{\underline{a}_2}^{(q + \lambda)}(\infty; \tilde{f}') = -\infty$ and $\Gamma(\underline{a}_2, \infty) > -\infty$,
        \item [\textbf{(3)}] $\rho_{\underline{a}_2}^{(q + \lambda)}(\infty; \tilde{f}') = -\infty$ and $\Gamma(\underline{a}_2, \infty) = -\infty$.
    \end{enumerate}

We show in Lemma \ref{lemma_upcross} that $b \mapsto \Gamma(\underline{a}_2, b)$ upcrosses zero in cases \textbf{(1)} and \textbf{(2)}, and Lemma \ref{Lemma: case 3} establishes that case \textbf{(3)} cannot occur. To facilitate the analysis, we observe that by Assumption \ref{asm: f slope}(2) and monotone convergence,
    \begin{align*}
       -K_p < \frac{f'(\infty)}{q} = \lim_{b \to \infty} \mathbb{E}_{b}\left[\int^\infty_0e^{-qt} f'(Y^{\underline{a}_2, b}(t))\, \diff t\right]
       = \lim_{b \to \infty}  v^{f'}_{\underline{a}_2,b}(b).    \end{align*}
    Substituting in \eqref{Eq: v^f' at b} with $a = \underline{a}_2$, it holds that 
    \begin{equation}\label{Eq: second case misc 0}
        0 < \lim_{b \to \infty} v^{f'}_{\underline{a}_2, b}(b) + K_p = \lim_{b \to \infty} \left( \frac{\frac{\Phi(q)}{q} \Gamma(\underline{a}_2, b) - \frac{\lambda}{q} e^{-\Phi(q)(b - \underline{a}_2)} \gamma(\underline{a}_2, b)}{
        \Theta^{(q + \lambda)}(b - \underline{a}_2, \Phi(q))} Z^{(q + \lambda)}(b - \underline{a}_2) + \frac{\lambda + q}{q} \gamma(\underline{a}_2, b) \right). 
    \end{equation}
     
\begin{lemma} \label{lemma_upcross}
    In cases \textbf{(1)} and \textbf{(2)}, $\overline{\Gamma}(\underline{a}_2-) > 0$.
\end{lemma}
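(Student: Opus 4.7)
The plan hinges on combining \eqref{Eq: second case misc 0} with a cancellation identity for $\Gamma(\underline{a}_2, \cdot)$. It suffices to exhibit some $b_0 > \underline{a}_2$ with $\Gamma(\underline{a}_2, b_0) > 0$: by continuity of $a \mapsto \Gamma(a, b_0)$, we then have $\Gamma(a, b_0) > 0$ for all $a$ in some left neighborhood of $\underline{a}_2$, whence $\overline{\Gamma}(a) \geq \Gamma(a, b_0) > 0$ there, giving $\overline{\Gamma}(\underline{a}_2-) > 0$.

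First, from \eqref{Eq: partial Gamma} and integration by parts on $\Gamma_\infty - \Gamma(\underline{a}_2, b)$, together with Lemma \ref{Lemma: rho monotone} (which gives $\partial_b \gamma(\underline{a}_2, \cdot) \geq 0$), one obtains
\[
\Gamma(\underline{a}_2, b) = \Gamma_\infty + \frac{\lambda}{\Phi(q)}\, e^{-\Phi(q)(b-\underline{a}_2)}\, \gamma(\underline{a}_2, b) + R(b), \qquad R(b) \geq 0,\ R(b) \to 0,
\]
valid in both cases since $\Gamma_\infty$ is finite. Substituting this into \eqref{Eq: v^f' at b} at $x = b$ makes the $\gamma$-linked cross-terms cancel, yielding the compact representation
\[
v^{f'}_{\underline{a}_2, b}(b) + K_p = \frac{\Phi(q)}{q}\, A(b)\,[\Gamma_\infty + R(b)] + \frac{\lambda+q}{q}\, \gamma(\underline{a}_2, b),
\]
where $A(b) := Z^{(q+\lambda)}(b-\underline{a}_2)/\Theta^{(q+\lambda)}(b-\underline{a}_2, \Phi(q))$. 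Using the classical tail $W^{(q)}(y) \sim e^{\Phi(q)y}/\psi'(\Phi(q))$ as $y \to \infty$, one derives $A(b) \sim \kappa\, e^{\Phi(q)(b-\underline{a}_2)}$ with $\kappa := (q+\lambda)(\Phi(q+\lambda)-\Phi(q))/[\lambda \Phi(q+\lambda)] > 0$.

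In case \textbf{(1)}, $\gamma_\infty$ is finite; the finiteness of the positive limit \eqref{Eq: second case misc 0}, combined with $A(b) \to \infty$ and $R(b) \to 0$, forces $\Gamma_\infty = 0$ and $A(b)\,R(b) \to [f'(\infty) + qK_p - (\lambda+q)\gamma_\infty]/\Phi(q) \geq 0$. Multiplying the identity for $\Gamma(\underline{a}_2, b)$ through by $A(b)$ and passing to the limit then gives
\[
A(b)\, \Gamma(\underline{a}_2, b) \to \frac{1}{\Phi(q)}\Bigl[f'(\infty) + qK_p - \frac{(\lambda+q)\Phi(q)}{\Phi(q+\lambda)}\, \gamma_\infty\Bigr].
\]
The strict inequality $\Phi(q)/\Phi(q+\lambda) < 1$, together with the bound $(\lambda+q)\gamma_\infty \leq f'(\infty) + qK_p$ (a consequence of $A(b)R(b) \geq 0$), makes this limit strictly positive regardless of the sign of $\gamma_\infty$; since $A(b) > 0$, we conclude $\Gamma(\underline{a}_2, b) > 0$ for all sufficiently large $b$.

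In case \textbf{(2)}, $\gamma(\underline{a}_2, \cdot)$ increases continuously from $K_p - K_c < 0$ to $+\infty$, so by the intermediate value theorem there is $b^\dagger > \underline{a}_2$ with $\gamma(\underline{a}_2, b^\dagger) = 0$, and by \eqref{Eq: partial Gamma} any such $b^\dagger$ is a global maximizer of $\Gamma(\underline{a}_2, \cdot)$. Evaluating the compact representation at $b = b^\dagger$ gives
\[
\Gamma(\underline{a}_2, b^\dagger) = \frac{q\,[v^{f'}_{\underline{a}_2, b^\dagger}(b^\dagger) + K_p]}{\Phi(q)\, A(b^\dagger)},
\]
reducing the claim to $v^{f'}_{\underline{a}_2, b^\dagger}(b^\dagger) + K_p > 0$. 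I expect this last step to be the main obstacle, since the compact representation itself only gives it at $b \to \infty$; the plan is to handle it by exploiting the monotonicity of $b \mapsto v^{f'}_{\underline{a}_2, b}(b)$ in the starting state, together with the limit $\lim_{b \to \infty} v^{f'}_{\underline{a}_2, b}(b) = f'(\infty)/q > -K_p$ used to derive \eqref{Eq: second case misc 0}, or alternatively via a sample-path comparison on $Y^{\underline{a}_2, b^\dagger}$ combined with Assumption \ref{asm: f slope}(2).
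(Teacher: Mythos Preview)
Your integration-by-parts identity and the resulting ``compact representation''
\[
v^{f'}_{\underline{a}_2,b}(b)+K_p=\frac{\Phi(q)}{q}\,A(b)\,[\Gamma_\infty+R(b)]+\frac{\lambda+q}{q}\,\gamma(\underline{a}_2,b)
\]
are correct and give a clean route through case \textbf{(1)}. One small imprecision: the limit in \eqref{Eq: second case misc 0} equals $f'(\infty)/q+K_p$, which may well be $+\infty$, so it does not by itself ``force $\Gamma_\infty=0$''. The fix is trivial---if $\Gamma_\infty>0$ you are done immediately, and $\Gamma_\infty<0$ is ruled out since then $A(b)[\Gamma_\infty+R(b)]\to-\infty$ while $\gamma_\infty$ is finite, contradicting \eqref{Eq: second case misc 0}---so only $\Gamma_\infty=0$ survives as the nontrivial sub-case, and your limit computation for $A(b)\,\Gamma(\underline{a}_2,b)$ then goes through (with limit $+\infty$ if $f'(\infty)=+\infty$).

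Case \textbf{(2)} has a genuine gap. Your reduction to $v^{f'}_{\underline{a}_2,b^\dagger}(b^\dagger)+K_p>0$ is correct, but at $b=b^\dagger$ (where $\gamma=0$) your own compact representation collapses to
\[
v^{f'}_{\underline{a}_2,b^\dagger}(b^\dagger)+K_p=\frac{\Phi(q)}{q}\,A(b^\dagger)\,\Gamma(\underline{a}_2,b^\dagger),
\]
so the two statements are \emph{equivalent}---you have not broken the circle. Your proposed fixes do not work: pathwise comparison shows $b\mapsto Y^{\underline{a}_2,b}$ (started at $b$) is increasing, hence $b\mapsto v^{f'}_{\underline{a}_2,b}(b)$ is \emph{non-decreasing}, so the limit $f'(\infty)/q>-K_p$ furnishes only an \emph{upper} bound at $b^\dagger$, not the lower bound you need. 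The sample-path idea combined with Assumption~\ref{asm: f slope}(2) is too vague; note that $Y^{\underline{a}_2,b^\dagger}(t)\geq\underline{a}_2$ but $\underline{a}_2<\bar{\bar a}$, so $f'(\underline{a}_2)+qK_p<0$ and no direct lower bound emerges.

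The paper avoids this circularity by splitting on the value of $\Gamma_\infty$ rather than on the finiteness of $\gamma_\infty$. When $\Gamma_\infty\neq 0$, one shows (using $e^{-\Phi(q)(b-\underline{a}_2)}\gamma(\underline{a}_2,b)\to 0$ and $A(b)\to\infty$, hence $|\gamma(\underline{a}_2,b)|=o(e^{\Phi(q)(b-\underline{a}_2)})$) that the sign of the expression in \eqref{Eq: second case misc 0} coincides with $\mathrm{sgn}\,\Gamma(\underline{a}_2,b)$ for large $b$, forcing $\Gamma(\underline{a}_2,b)>0$ eventually. When $\Gamma_\infty=0$, in your case \textbf{(2)} one has $\gamma_\infty=+\infty$, so $b\mapsto\Gamma(\underline{a}_2,b)$ is first strictly increasing then strictly decreasing to $0$; since it starts negative, its maximum must be strictly positive. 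Either of these arguments closes your gap.
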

\begin{proof}
Suppose that case \textbf{(1)} or \textbf{(2)} holds. Using \eqref{Eq: Gamma} and because $\int_0^\infty e^{-\Phi(q) y} \tilde{f}'(y + \underline{a}_2) \, \diff y \in \mathbb{R}$, we have
    \begin{equation*}
        \Gamma(\underline{a}_2, \infty) > -\infty,  \quad \text{or equivalently,}  \quad \int_0^{\infty} e^{-\Phi(q)y} \rho_{\underline{a}_2}^{(q + \lambda)}(y + \underline{a}_2; \tilde{f}')\, \diff y > -\infty.
    \end{equation*}
    The latter is satisfied in case \textbf{(1)} and the former is satisfied in case \textbf{(2)}. Hence, for both cases, both sides of the implication hold. This observation, together with the fact that  $\rho_{\underline{a}_2}^{(q + \lambda)}(\infty; \tilde{f}') \leq 0$ as given in \eqref{Eq: rho limit}, implies that 
\begin{equation}    
    e^{-\Phi(q)(b - \underline{a}_2)} \rho_{\underline{a}_2}^{(q + \lambda)}(b; \tilde{f}') \xrightarrow{b \uparrow \infty} 0 \label{rho_asymp}
\end{equation}
in both cases. Furthermore, for any $a \in \mathbb{R}$, applying L'H\^opital's rule together with the limits in \eqref{Eq: W, Z limits} yields
\begin{multline}\label{Eq: second case misc 2}
        \lim_{b \to \infty} \frac{e^{-\Phi(q)(b - a)} Z^{(q + \lambda)}(b - a)}{\Theta^{(q + \lambda)}(b - a, \Phi(q))} = \lim_{b \to \infty} \frac{-\Phi(q) e^{-\Phi(q)(b - a)} Z^{(q + \lambda)}(b - a)}{\lambda e^{-\Phi(q)(b - a)} W^{(q + \lambda)}(b - a)}\\
        + \lim_{b \to \infty} \frac{e^{-\Phi(q)(b - a)} (q + \lambda) W^{(q + \lambda)}(b - a)}{\lambda e^{-\Phi(q)(b - a)} W^{(q + \lambda)}(b - a)} = \frac{\lambda + q}{\lambda} \frac{\Phi(q + \lambda) - \Phi(q)}{\Phi(q + \lambda)} > 0.
    \end{multline}

We now show that $\overline{\Gamma}(\underline{a}_2) > 0$. By the continuity of $a \mapsto \Gamma(a, b)$ for $b \in \mathbb{R}$ and $a < b$, it then follows that $\overline{\Gamma}(\underline{a}_2-) > 0$.

(i) Suppose that $\Gamma(\underline{a}_2, \infty) \neq 0$. Equation \eqref{Eq: gamma} together with \eqref{rho_asymp} implies that $e^{-\Phi(q)(b - \underline{a}_2)} \gamma(\underline{a}_2, b) \xrightarrow{b \uparrow \infty} 0$. Combining this observation with \eqref{Eq: second case misc 2} implies that for all sufficiently large $b$,
\begin{equation*}
   \mathrm{sgn} \left( \frac{\frac{\Phi(q)}{q} \Gamma(\underline{a}_2, b) - \frac{\lambda}{q} e^{-\Phi(q)(b - \underline{a}_2)} \gamma(\underline{a}_2, b)}{\Theta^{(q + \lambda)}(b - \underline{a}_2, \Phi(q))} Z^{(q + \lambda)}(b - \underline{a}_2) + \frac{\lambda + q}{q} \gamma(\underline{a}_2, b) \right) = \mathrm{sgn} (\Gamma(\underline{a}_2, b)).
\end{equation*}
By \eqref{Eq: second case misc 0}, $\Gamma(\underline{a}_2, b)$ must be positive for every sufficiently large $b$. Thus, $\overline{\Gamma}(\underline{a}_2) > 0$.
     
(ii) Suppose that $\Gamma(\underline{a}_2, \infty) = 0$. We further introduce two sub-cases. 

(ii)-(A) Suppose that $\rho_{\underline{a}_2}^{(q + \lambda)}(\infty; \tilde{f}') < K_p - K_c$. Then, by \eqref{Eq: partial Gamma}, the function $b \mapsto \Gamma(\underline{a}_2, b)$ is first increasing then decreasing (to zero by assumption). Hence, it must upcross zero. 

(ii)-(B) Suppose that $ \rho_{\underline{a}_2}^{(q + \lambda)}(\infty; \tilde{f}') \geq K_p - K_c$. We will show that this case is impossible in view of \eqref{Eq: second case misc 0}.

Recall that the function $b \mapsto \rho_{\underline{a}_2}^{(q + \lambda)}(b; \tilde{f}')$ is non-increasing. This observation, together with \eqref{Eq: gamma}, implies that $b \mapsto \gamma(\underline{a}_2, b)$ is non-decreasing for all $b > \underline{a}_2$. Moreover, since $\rho_{\underline{a}_2}^{(q + \lambda)}(\infty; \tilde{f}') \geq K_p - K_c$, $b \mapsto \gamma(\underline{a}_2, b)$ is also non-positive. Using \eqref{Eq: Gamma} with $\Gamma(\underline{a}_2, \infty) = 0$, which is the standing assumption in case (ii),
    \begin{multline*}
        \Gamma(\underline{a}_2, b) = \Gamma(\underline{a}_2, b)  - \Gamma(\underline{a}_2, \infty) = -\lambda \int_{b - \underline{a}_2}^\infty e^{-\Phi(q)y}\rho_{\underline{a}_2}^{(q + \lambda)}(y + \underline{a}_2; \tilde{f}') \, \diff y + \frac{\lambda}{\Phi(q)} e^{-\Phi(q)(b - \underline{a}_2)} \left(K_p - K_c\right)\\
        = \lambda \int_{b - \underline{a}_2}^\infty e^{-\Phi(q)y} \left(-\rho_{\underline{a}_2}^{(q + \lambda)}(y + \underline{a}_2; \tilde{f}')  + K_p - K_c\right) \, \diff y = \lambda \int_{b - \underline{a}_2}^\infty e^{-\Phi(q)y} \gamma(\underline{a}_2, y + \underline{a}_2) \, \diff y.
    \end{multline*}
    It follows that
    \begin{equation}\label{Gamma_gamma_as_gamma}
        \frac{\Phi(q)}{q} \Gamma(\underline{a}_2, b) - \frac{\lambda}{q} e^{-\Phi(q)(b - \underline{a}_2)} \gamma(\underline{a}_2, b) = \frac{\lambda}{q} \int_{b - \underline{a}_2}^\infty \Phi(q) e^{-\Phi(q)y} \left(\gamma(\underline{a}_2, y + \underline{a}_2) - \gamma(\underline{a}_2, b)\right) \, \diff y.
    \end{equation}
    
Since $b \mapsto \gamma(\underline{a}_2, b)$ is non-decreasing and non-positive, it follows that either  
\begin{itemize}
    \item[(a)] $\gamma(\underline{a}_2, b) < 0$ for all $b$, 
    \item[(b)] there exists $M \in \mathbb{N}$ such that $\gamma(\underline{a}_2, b) = 0$ for $b > M$.
\end{itemize}
    
(ii)-(B)-(a) By \eqref{Gamma_gamma_as_gamma}, for $b > \underline{a}_2$,
\begin{align*}
    e^{\Phi(q)(b - \underline{a}_2)} \left(\frac{\Phi(q)}{q} \Gamma(\underline{a}_2, b) - \frac{\lambda}{q} e^{-\Phi(q)(b - \underline{a}_2)} \gamma(\underline{a}_2, b)\right) = \frac{\lambda}{q} \int_{0}^\infty \Phi(q) e^{-\Phi(q)y} \left(\gamma(\underline{a}_2, y + \underline{a}_2) - \gamma(\underline{a}_2, b) \right) \, \diff y.
\end{align*}
In view of the above equation and the monotonicity and negativity of $b \mapsto \gamma(\underline{a}_2, b)$,
\begin{equation}\label{Eq: existence bound}
    e^{\Phi(q)(b - \underline{a}_2)} \left(\frac{\Phi(q)}{q} \Gamma(\underline{a}_2, b) - \frac{\lambda}{q} e^{-\Phi(q)(b - \underline{a}_2)} \gamma(\underline{a}_2, b)\right) \leq -\frac{\lambda}{q} \gamma(\underline{a}_2, b).
\end{equation}
Combining \eqref{Eq: existence bound} with \eqref{Eq: second case misc 2}, and then applying \eqref{Eq: second case misc 0}, we obtain
    \begin{multline*}
        0 < \lim_{b \to \infty} \left( \frac{\frac{\Phi(q)}{q} \Gamma(\underline{a}_2, b) - \frac{\lambda}{q} e^{-\Phi(q)(b - \underline{a}_2)} \gamma(\underline{a}_2, b)}{\Theta^{(q + \lambda)}(b - \underline{a}_2, \Phi(q))} Z^{(q + \lambda)}(b - \underline{a}_2) + \frac{\lambda + q}{q} \gamma(\underline{a}_2, b) \right)\\
        \leq \lim_{b \to \infty} \left(-\frac{\lambda}{q}\frac{\lambda + q}{\lambda} \frac{\Phi(q + \lambda) - \Phi(q)}{\Phi(q + \lambda)} \gamma(\underline{a}_2, b) + \frac{\lambda + q}{q} \gamma(\underline{a}_2, b) \right) = \lim_{b \to \infty} \frac{\lambda + q}{q} \frac{\Phi(q)}{\Phi(q + \lambda)}\gamma(\underline{a}_2, b),
    \end{multline*}
    which contradicts the negativity of $b \mapsto \gamma(\underline{a}_2, b)$.
    
(ii)-(B)-(b) 
For $b > M$, setting $\gamma(\underline{a}_2, b) = 0$ in \eqref{Eq: second case misc 0} yields $\Gamma(\underline{a}_2, b) > 0$. This again raises a contradiction. Indeed, by \eqref{Eq: gamma}, $\gamma(\underline{a}_2, b) = 0$ on $(M, \infty)$ implies that
\begin{equation*}
    \Gamma(\underline{a}_2, \infty) = \Gamma(\underline{a}_2, b) > 0, \quad b > M,
\end{equation*}
which contradicts with $\Gamma(\underline{a}_2, \infty) = 0$.
\end{proof}

Now, we show that case \textbf{(3)} cannot occur. The following lemma proves useful; its proof is provided in Appendix \ref{Appendix: proof of alt form gamma}.
    \begin{lemma}\label{Lemma: alt form gamma}
        For $a < b$, we have
        \begin{align}
            \Gamma(a, b) &= e^{-\Phi(q)(b - a)} \left(\int_0^{\infty} \tilde{f}'(z + a)Z^{(q + \lambda)}(b - a - z, \Phi(q)) \,\diff z + \frac{\lambda}{\Phi(q)}  \left(K_p - K_c\right)\right)\label{Eq: Gamma alt 2},\\
            -\lambda \gamma(a,b) &= e^{\Phi(q)(b - a)}\frac{\partial}{\partial b} \Gamma(a, b) = -\Phi(q) \tilde{\Gamma}(a, b) + \frac{\partial}{\partial b}\tilde{\Gamma}(a, b), \label{Eq: gamma alt 2}
        \end{align}
        where 
        \begin{equation}\label{Eq: Gamma tilde}
            \tilde{\Gamma}(a, b) \coloneqq e^{\Phi(q)(b - a)} \Gamma(a, b) = \int_0^{\infty} \tilde{f}'(z + a) Z^{(q + \lambda)}(b - a - z, \Phi(q))\,\diff z + \frac{\lambda}{\Phi(q)} \left(K_p - K_c\right).
        \end{equation}
    \end{lemma}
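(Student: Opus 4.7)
\textbf{Proof plan for Lemma \ref{Lemma: alt form gamma}.}
The plan is to prove \eqref{Eq: Gamma alt 2} by direct manipulation of the definition of $\Gamma$ in \eqref{Eq: Gamma}, using the defining representation of the second scale function in \eqref{Eq: Z phi} together with Fubini's theorem. Setting $u := b - a$, I would split the integral $\int_0^\infty \tilde{f}'(z+a)\, Z^{(q+\lambda)}(u-z,\Phi(q))\,\diff z$ at $z = u$. On $\{z \geq u\}$ the second scale function collapses to $e^{\Phi(q)(u-z)}$ since $W^{(q+\lambda)} \equiv 0$ on the negative half-line, which, after the outer factor $e^{-\Phi(q)u}$, reproduces exactly the tail piece $\int_u^\infty e^{-\Phi(q)z}\tilde{f}'(z+a)\,\diff z$. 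On $\{0 \leq z < u\}$, substituting the explicit formula \eqref{Eq: Z phi} yields the remaining piece of $\int_0^\infty e^{-\Phi(q)y}\tilde{f}'(y+a)\,\diff y$ plus a double integral in $z$ and an auxiliary variable $w$.

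The heart of the computation is to identify this double integral with $\lambda \int_0^{u} e^{-\Phi(q)y}\rho_a^{(q+\lambda)}(y+a;\tilde{f}')\,\diff y$. Applying Fubini's theorem and the change of variable $y = w + z$ in the inner integral, this follows from $\rho_a^{(q+\lambda)}(y+a;\tilde{f}') = \int_0^y \tilde{f}'(s+a)W^{(q+\lambda)}(y-s)\,\diff s$ by \eqref{Eq: rho}. Collecting all pieces together with the $\frac{\lambda}{\Phi(q)}(K_p - K_c)$ term then reconstructs the definition \eqref{Eq: Gamma}, proving \eqref{Eq: Gamma alt 2}.

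For \eqref{Eq: gamma alt 2}, the first equality is immediate from \eqref{Eq: partial Gamma} and the definition \eqref{Eq: gamma}: one has $\rho_a^{(q+\lambda)}(b;\tilde{f}') + K_c - K_p = -\gamma(a,b)$, so $\frac{\partial}{\partial b}\Gamma(a,b) = -\lambda e^{-\Phi(q)(b-a)}\gamma(a,b)$, and multiplying by $e^{\Phi(q)(b-a)}$ yields $-\lambda \gamma(a,b) = e^{\Phi(q)(b-a)}\frac{\partial}{\partial b}\Gamma(a,b)$. For the second equality, I would write $\Gamma(a,b) = e^{-\Phi(q)(b-a)}\tilde{\Gamma}(a,b)$ via part (1), apply the product rule in $b$, and rearrange.

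The only technical point I foresee is justifying the interchange of integration and the differentiation under the integral sign in $\tilde{\Gamma}$. These are handled via Assumption \ref{asm: f convexity} (polynomial growth of $\tilde{f}'$) combined with the exponential factor $e^{-\Phi(q)y}$ and the known exponential growth rate of $W^{(q+\lambda)}$ recorded in \eqref{Eq: W, Z limits}, which together guarantee absolute integrability on the relevant domains. No serious obstacle is expected; the result is essentially a bookkeeping identity once the second scale function is unpacked.
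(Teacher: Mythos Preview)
Your proposal is correct and follows essentially the same approach as the paper's proof: both arguments unpack the definition \eqref{Eq: Z phi} of $Z^{(q+\lambda)}(\cdot,\Phi(q))$ and use Fubini's theorem (with the substitution $y=w+z$) to identify the double integral with $\lambda\int_0^{b-a}e^{-\Phi(q)y}\rho_a^{(q+\lambda)}(y+a;\tilde{f}')\,\diff y$, thereby matching \eqref{Eq: Gamma}. The only cosmetic difference is direction---you work from the $Z^{(q+\lambda)}$ representation back to \eqref{Eq: Gamma}, whereas the paper starts from \eqref{Eq: Gamma} and rewrites it---and your treatment of \eqref{Eq: gamma alt 2} via \eqref{Eq: partial Gamma}, \eqref{Eq: gamma}, and the product rule is identical to the paper's.
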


\begin{lemma}\label{Lemma: case 3}
    For $\underline{a}_1 > \underline{a}_2$, case \textbf{(3)} (i.e. $\Gamma(\underline{a}_2, \infty) = -\infty$ and $\rho_{\underline{a}_2}^{(q + \lambda)}(\infty; \tilde{f}') = -\infty$) cannot occur.
\end{lemma}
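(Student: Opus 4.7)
My approach is by contradiction, exploiting the cancellation $\int_{\underline{a}_2}^\infty \tilde{f}'(y) e^{-\Phi(q+\lambda)y}\,\diff y = 0$ provided by $\Gamma_2(\underline{a}_2) = 0$ (see \eqref{Eq: Gamma 2}). The plan is to show that case \textbf{(3)} would require $\gamma(\underline{a}_2, u)$ to grow faster than polynomially, while the cancellation in fact forces $\gamma(\underline{a}_2, u)$ to be polynomially bounded.

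Assuming case \textbf{(3)}, \eqref{Eq: gamma} gives $\gamma(\underline{a}_2, u) \to +\infty$ with $u \mapsto \gamma(\underline{a}_2, u)$ non-decreasing (Lemma \ref{Lemma: rho monotone}). Integrating \eqref{Eq: partial Gamma} on $(\underline{a}_2, B)$ and letting $B \to \infty$ yields
\[
-\infty = \Gamma(\underline{a}_2, \infty) - \Gamma(\underline{a}_2, \underline{a}_2+) = -\lambda \int_{\underline{a}_2}^\infty e^{-\Phi(q)(u - \underline{a}_2)}\gamma(\underline{a}_2, u)\,\diff u,
\]
so the integral is $+\infty$, which, since the integrand is polynomial-times-exponential-decay for any polynomial $\gamma$, is incompatible with $\gamma(\underline{a}_2, u) = O(u^M)$ for any $M$. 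In particular $\gamma$ must grow at least at the nearly-exponential rate $e^{\Phi(q)u}/(1+u)^{1+\varepsilon}$.

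Next I would derive an opposing polynomial upper bound. Under Assumption \ref{asm: on X}, $\psi$ admits an analytic extension in a strip, which (via a standard residue argument) yields $W^{(q+\lambda)}(u) = e^{\Phi(q+\lambda) u}/\psi'(\Phi(q+\lambda)) + R(u)$ with $|R(u)| \leq C e^{-\delta u}$ for some $\delta > 0$ determined by the next root of $\psi - (q+\lambda)$. Substituting into $\rho_{\underline{a}_2}^{(q+\lambda)}(b;\tilde f') = \int_{\underline{a}_2}^b \tilde{f}'(y) W^{(q+\lambda)}(b-y)\,\diff y$ and applying $\Gamma_2(\underline{a}_2)=0$ gives
\[
\rho_{\underline{a}_2}^{(q+\lambda)}(b;\tilde{f}') = -\frac{e^{\Phi(q+\lambda) b}}{\psi'(\Phi(q+\lambda))}\int_b^\infty \tilde{f}'(y) e^{-\Phi(q+\lambda) y}\,\diff y + \int_{\underline{a}_2}^b \tilde{f}'(y) R(b-y)\,\diff y.
\]
Since $\tilde{f}'$ is polynomially bounded (by Assumption \ref{asm: f convexity} together with convexity of $f$), integration by parts shows the first term is polynomial in $b$, while the exponential decay of $R$ gives $\left|\int_{\underline{a}_2}^b \tilde{f}'(y) R(b-y)\,\diff y\right| \leq C e^{-\delta b}\int_{\underline{a}_2}^b |\tilde{f}'(y)| e^{\delta y}\,\diff y$, which is also polynomial in $b$ by a Laplace-type estimate. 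Hence $\gamma(\underline{a}_2, u) = -\rho_{\underline{a}_2}^{(q+\lambda)}(u;\tilde{f}') + K_p - K_c$ is polynomially bounded in $u$, contradicting the conclusion of the previous paragraph.

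The main obstacle will be the quantitative scale-function estimate $|W^{(q+\lambda)}(u) - e^{\Phi(q+\lambda) u}/\psi'(\Phi(q+\lambda))| \leq C e^{-\delta u}$. This is classical under Assumption \ref{asm: on X}, following from a contour shift applied to the Bromwich inverse $\frac{1}{2\pi i}\int \frac{e^{su}}{\psi(s)-(q+\lambda)}\,\diff s$ and the existence of a subleading (real or complex) root of $\psi - (q+\lambda)$ in the strip of analyticity. If the direct residue argument proves delicate in full generality, an alternative route substitutes the identities of Lemma \ref{Lemma: alt form gamma} into \eqref{Eq: second case misc 0} and performs a dominant-balance analysis of its right-hand side; this reduces to the same subleading estimate and again forces the limit to be non-positive in case \textbf{(3)}, contradicting strict positivity.
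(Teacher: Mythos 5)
Your overall strategy is sound and genuinely different from the paper's: you never invoke \eqref{Eq: second case misc 0} (i.e.\ Assumption \ref{asm: f slope}(2)); instead you play the divergence forced by $\Gamma(\underline{a}_2,\infty)=-\infty$ (via integrating \eqref{Eq: partial Gamma}, which correctly gives $\int_{\underline{a}_2}^{\infty}e^{-\Phi(q)(u-\underline{a}_2)}\gamma(\underline{a}_2,u)\,\diff u=+\infty$, hence $\gamma$ cannot be polynomially bounded) against a polynomial bound on $\rho_{\underline{a}_2}^{(q+\lambda)}(b;\tilde f')$ extracted from the cancellation $\Gamma_2(\underline{a}_2)=0$ in \eqref{Eq: Gamma 2}. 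In fact your second half is stronger than you need: a polynomial bound on $\rho_{\underline{a}_2}^{(q+\lambda)}(\cdot\,;\tilde f')$ already makes the integral in \eqref{Eq: Gamma} convergent, so $\Gamma(\underline{a}_2,\infty)$ would be finite, a direct contradiction. By contrast, the paper uses only the leading asymptotics $W^{(q+\lambda)}(x)\sim e^{\Phi(q+\lambda)x}/\psi'(\Phi(q+\lambda))$: it feeds Lemma \ref{Lemma: alt form gamma} into \eqref{Eq: second case misc 0}, derives the differential inequality \eqref{Eq: inequality x1} for $\tilde\Gamma$, concludes that $|\gamma(\underline{a}_2,b)|$ grows at rate $e^{\Phi(q+\lambda)b}$, and contradicts $\Gamma_2(\underline{a}_2)=0$ through \eqref{Eq: end value misc 1}. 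Your route trades that dominant-balance argument for a sharper scale-function estimate, and is arguably more transparent once that estimate is secured.

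The gap is precisely there: the estimate $W^{(q+\lambda)}(u)=e^{\Phi(q+\lambda)u}/\psi'(\Phi(q+\lambda))+R(u)$ with $|R(u)|\le Ce^{-\delta u}$ is not justified by the argument you sketch. Under Assumption \ref{asm: on X} the function $\psi-(q+\lambda)$ need not have any ``next root'' (real or complex) in the strip $\{\mathrm{Re}(s)>-\theta\}$, so $\delta$ cannot be ``determined by the next root''; moreover the Bromwich contour shift requires integrability of $1/(\psi(s)-(q+\lambda))$ along vertical lines, which fails to be automatic (e.g.\ in the bounded-variation case the integrand decays only like $1/|s|$), so the residue computation as stated does not go through in the generality of the lemma. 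The estimate itself is nevertheless true and can be proved probabilistically: writing $W^{(q+\lambda)}(u)=e^{\Phi(q+\lambda)u}W_{\Phi(q+\lambda)}(u)$ for the Esscher-tilted process, one has $\tfrac{1}{\psi'(\Phi(q+\lambda))}-W_{\Phi(q+\lambda)}(u)=\tfrac{1}{\psi'(\Phi(q+\lambda))}\,\mathbb{P}^{\Phi(q+\lambda)}\bigl(\inf_{t\ge 0}X(t)<-u\bigr)$, and since Assumption \ref{asm: on X} makes $\psi$ finite on $(-\theta,0)$ with $\psi(s_0)<q+\lambda$ for $s_0<0$ close to $0$, the Lundberg supermartingale bound gives $\mathbb{P}^{\Phi(q+\lambda)}(\inf_t X(t)<-u)\le e^{-(\Phi(q+\lambda)-s_0)u}$, i.e.\ $|R(u)|\le Ce^{s_0u}$, which is exactly the exponential decay you need. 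With that replacement your remaining steps (the split using $\int_{\underline{a}_2}^{\infty}\tilde f'(y)e^{-\Phi(q+\lambda)(y-\underline{a}_2)}\,\diff y=0$, the polynomial bound on both resulting terms via the polynomial growth of $\tilde f'$, and the contradiction with the first paragraph) are correct; your fallback ``dominant-balance'' remark, however, is too vague to count as a proof and essentially gestures at the paper's own argument.
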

\begin{proof}
    Suppose $\Gamma(\underline{a}_2, \infty) = -\infty$ and $\rho_{\underline{a}_2}^{(q + \lambda)}(\infty; \tilde{f}') = -\infty$ for the sake of contradiction. By \eqref{Eq: second case misc 2} and \eqref{Eq: gamma alt 2}, we have
    \begin{align}\label{Eq: misc x}
    \begin{split}
        &~~ \lim_{b \to \infty} \left( \frac{\frac{\Phi(q)}{q} \Gamma(\underline{a}_2, b) - \frac{\lambda}{q} e^{-\Phi(q)(b - \underline{a}_2)} \gamma(\underline{a}_2, b)}{\Theta^{(q + \lambda)}(b - \underline{a}_2, \Phi(q))} Z^{(q + \lambda)}(b - \underline{a}_2) + \frac{\lambda + q}{q} \gamma(\underline{a}_2, b) \right) \\
        &= \lim_{b \to \infty} \left( \frac{\lambda + q}{\lambda} \frac{\Phi(q + \lambda) - \Phi(q)}{\Phi(q + \lambda)} \left(\frac{\Phi(q)}{q} \tilde{\Gamma}(\underline{a}_2, b) - \frac{\lambda}{q} \gamma(\underline{a}_2, b) \right) + \frac{\lambda + q}{q} \gamma(\underline{a}_2, b) \right)\\
        &= \lim_{b \to \infty} \frac{\lambda + q}{q\lambda} \left( -\frac{\Phi(q)}{\Phi(q + \lambda)} \frac{\partial}{\partial b} \tilde{\Gamma}(\underline{a}_2, b) + \Phi(q) \tilde{\Gamma}(\underline{a}_2, b) \right).
        \end{split}
    \end{align}
    Combining \eqref{Eq: misc x} with the inequality established in \eqref{Eq: second case misc 0}, we obtain
    \begin{equation} \label{inequality_gamma_tilde}
        0 < \lim_{b \to \infty} \frac{\lambda + q}{q\lambda} \left( -\frac{\Phi(q)}{\Phi(q + \lambda)} \frac{\partial}{\partial b} \tilde{\Gamma}(\underline{a}_2, b) + \Phi(q) \tilde{\Gamma}(\underline{a}_2, b) \right) \, \text{ and thus } \, 0 < \lim_{b \to \infty} \left(\tilde{\Gamma}(\underline{a}_2, b) -\frac{\frac{\partial}{\partial b} \tilde{\Gamma}(\underline{a}_2, b)}{\Phi(q + \lambda)} \right).
    \end{equation}

    Since $b \mapsto \rho_{\underline{a}_2}^{(q + \lambda)}(b; \tilde{f}')$ is non-increasing and non-positive on $(\underline{a}_2, \infty)$ (with $\rho_{\underline{a}_2}^{(q + \lambda)}(\infty; \tilde{f}') = -\infty$), it follows from \eqref{Eq: partial Gamma} that $b \mapsto \frac{\partial}{\partial b} \Gamma(\underline{a}_2, b)$ is positive at first and then negative. Moreover, because $\Gamma(\underline{a}_2, \infty) = -\infty$, there exists $b_0 > \underline{a}_2$ such that, for all $b > b_0$, the function $b \mapsto \Gamma(\underline{a}_2, b)$ is negative and strictly decreasing. Then, by the definition of $\tilde{\Gamma}$ in \eqref{Eq: Gamma tilde} as the product of $\Gamma$ and a positive function increasing in $b$, the function $b \mapsto \tilde{\Gamma}(\underline{a}_2, b)$ is monotonically decreasing on $(b_0,\infty)$ and tends to $-\infty$. Thus, for sufficiently large $b$, $\frac{\partial}{\partial b} \tilde{\Gamma}(\underline{a}_2, b) < 0$; combining this with \eqref{inequality_gamma_tilde} yields
    \begin{equation}\label{Eq: inequality x1}
        \frac{1}{\Phi(q + \lambda)} > \frac{\tilde{\Gamma}(\underline{a}_2, b)}{\frac{\partial}{\partial b} \tilde{\Gamma}(\underline{a}_2, b)}.
    \end{equation}
    Using the above inequality and that $b \mapsto \tilde{\Gamma}(\underline{a}_2, b)$ is decreasing on $(b_0, \infty)$ with $\tilde{\Gamma}(\underline{a}_2, b) \to -\infty$, we conclude that the function $b \mapsto |\tilde{\Gamma}(\underline{a}_2, b)|$ grows asymptotically no slower than an exponential function with parameter $\Phi(q + \lambda)$.

    The same growth behavior can be established for the function $b \mapsto |\gamma(\underline{a}_2, b)|$. To see this, by \eqref{Eq: gamma alt 2},
    \begin{align*}
        -\gamma(\underline{a}_2, b) = -\frac{\Phi(q)}{\lambda} \tilde{\Gamma}(\underline{a}_2, b) + \frac{\frac{\partial}{\partial b}\tilde{\Gamma}(\underline{a}_2, b)}{\lambda}, \quad b > \underline{a}_2.
    \end{align*}
    For sufficiently large $b$, combining \eqref{Eq: inequality x1} with $\tilde{\Gamma}(\underline{a}_2, b) < 0$, we obtain $\frac{\partial}{\partial b} \tilde{\Gamma}(\underline{a}_2, b) < \Phi(q + \lambda) \tilde{\Gamma}(\underline{a}_2, b) < 0$. Thus, 
    \begin{align*}
        -\gamma(\underline{a}_2, b) < -\frac{\Phi(q)}{\lambda} \tilde{\Gamma}(\underline{a}_2, b) + \frac{\Phi(q + \lambda)}{\lambda} \tilde{\Gamma}(\underline{a}_2, b) = \frac{\Phi(q + \lambda) - \Phi(q)}{\lambda} \tilde{\Gamma}(\underline{a}_2, b)< 0.
    \end{align*}
    The growth behavior of $b \mapsto |\gamma(\underline{a}_2, b)|$ follows from the above inequality and the growth of $b \mapsto |\tilde{\Gamma}(\underline{a}_2, b)|$.
    
    Note that the growth behavior of $b \mapsto |\gamma(\underline{a}_2, b)|$ leads to a contradiction. Indeed, as $\gamma(\underline{a}_2, b) = -\rho_{\underline{a}_2}^{(q + \lambda)}(b; \tilde{f}') + K_p - K_c$, we obtain 
    \begin{equation*}
        \lim_{b \to \infty} \frac{-\rho_{\underline{a}_2}^{(q + \lambda)}(b; \tilde{f}')}{W^{(q + \lambda)}(b - \underline{a}_2)} = \lim_{b \to \infty}  \frac{\gamma(\underline{a}_2, b)}{W^{(q + \lambda)}(b - \underline{a}_2)} > 0,
    \end{equation*}
    where the inequality follows as $W^{(q + \lambda)}(x) \sim \frac{e^{\Phi(q + \lambda)x}}{\psi'(\Phi(q + \lambda))}$ for large $x$ and $\gamma(\underline{a}_2, b) > 0$ for large $b$. This contradicts with the definition of $\underline{a}_2$, which requires (see \eqref{Eq: end value misc 1})
    \begin{equation*}
        0 = \int_0^\infty e^{-\Phi(q + \lambda)y} \tilde{f}'(y + \underline{a}_2)\, \diff y = \lim_{b \to \infty}  \frac{\rho_{\underline{a}_2}^{(q + \lambda)}(b; \tilde{f}')}{W^{(q + \lambda)}(b - \underline{a}_2)} = \lim_{b \to \infty}  \frac{-\gamma(\underline{a}_2, b)}{W^{(q + \lambda)}(b - \underline{a}_2)}.
    \end{equation*}
    Thus, we conclude that case \textbf{(3)}, where $\Gamma(\underline{a}_2, \infty) = -\infty$ and $\rho_{\underline{a}_2}^{(q + \lambda)}(\infty; \tilde{f}') = -\infty$, is impossible. 
    \end{proof}
    
    In summary, we have established that case \textbf{(3)} cannot occur, and that in cases \textbf{(1)} and \textbf{(2)}, $\overline{\Gamma}(\underline{a}_2-) > 0$. It remains to show that $\overline{\Gamma}$ is negative for some $a \in (-\infty, \underline{a}_2)$. To this end, recall $a^\dagger$ as defined in \eqref{Eq: a tilde}. We will show that $a^\dagger < \underline{a}_2$.

    Since $b \mapsto \Gamma(\underline{a}_2, b)$ upcrosses zero and $\rho_{\underline{a}_2}^{(q + \lambda)}(b; \tilde{f}')$ is non-positive, in view of \eqref{Eq: Gamma}, it must hold that $\int_0^\infty e^{-\Phi(q)y} \tilde{f'}(y + \underline{a}_2) \, \diff y > 0$. Hence, $a^\dagger < \underline{a}_2$. The argument in Section \ref{Sect: case 1} implies $\overline{\Gamma}(a^\dagger) < 0$, and consequently the existence of $a^\dagger < a^* < \underline{a}_2$ and $b^* > a^*$ such that $\Gamma(a^*, b^*) = \gamma(a^*, b^*) = 0$.

\section{Verification of optimality}\label{Sect: optimality}
In this section, we establish the optimality of the candidate policy $\pi^{a^*, b^*}$, which constitutes the main result of this study. Let $\mathcal{L}$ be the infinitesimal generator associated with the spectrally negative L\'evy process $X$, when applied to a sufficiently smooth function $h: \mathbb{R} \to \mathbb{R}$, it holds that
\begin{equation}\label{Eq: generator}
    \mathcal{L}h(x) \coloneqq \gamma h'(x) + \frac{\sigma^2}{2}h''(x) + \int_{(-\infty, 0)} \left[h(x + z) - h(x) - h'(x) z 1_{\{-1 < z < 0\}} \right]\,\mu(\diff z), \quad x\in \mathbb{R}. 
\end{equation}
Additionally, we define an operator $\mathcal{M}$ acting on a measurable function $h$ by
\begin{equation*}
    \mathcal{M}h(x) \coloneqq \inf_{l \geq 0} \{K_pl + h(x + l)\}.
\end{equation*} 

The following lemma outlines the sufficient conditions for the optimality of $\pi^{a^*, b^*}$. The proof follows a standard argument involving It\^o's lemma, as in \cite{avram_exit_2004, baurdoux_optimality_2015, perez_optimal_2020}, among others, and is therefore omitted.

\begin{lemma}\label{Lemma: verification lemma}
Suppose that $w: \mathbb{R} \to \mathbb{R}$ is the total cost under an admissible policy. Further suppose that the function $w$ is sufficiently smooth on $\mathbb{R}$, has polynomial growth, satisfies
    \begin{align}
        (\mathcal{L}-q)w + \lambda(\mathcal{M}w - w) + f &\geq 0,\label{Eq: vi condition 1}\\
        w' + K_c &\geq 0. \label{Eq: vi condition 2}
    \end{align}
    Additionally, suppose that
    \begin{equation}\label{Eq: limsup}
        \limsup_{t, n \uparrow \infty} \mathbb{E}_x\left[e^{-q(t\wedge \tau_n)} w(Y^\pi(t\wedge \tau_n))\right] \leq 0, \quad x \in \mathbb{R}, 
    \end{equation}
    for all admissible policies $\pi \in \Pi$, where $\tau_n \coloneqq \inf\{t \geq 0: |Y^{\pi}(t)| > n\}$, for $n \in \mathbb{N}$. Then, the function $w$ coincides with the value function, i.e., $w(x) = v(x) = \inf_{\pi \in \Pi} v^\pi(x)$ for all $x \in \mathbb{R}$.
\end{lemma}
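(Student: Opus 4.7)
The plan is to run the standard verification argument based on It\^o's formula applied to $s\mapsto e^{-qs}w(Y^\pi(s))$ for an arbitrary admissible policy $\pi\in\Pi$ and stopped at $t\wedge\tau_n$. Since $w$ is sufficiently smooth and $X$ has finite exponential moments by Assumption \ref{asm: on X}, the generalized It\^o formula is applicable and yields a decomposition of $e^{-q(t\wedge\tau_n)}w(Y^\pi(t\wedge\tau_n)) - w(x)$ into a drift term proportional to $(\mathcal{L} - q)w$, a local martingale coming from the martingale part of $X$ and the compensated jump measure, a continuous-control contribution $\int_0^{t\wedge\tau_n}e^{-qs}w'(Y^\pi(s-))\,\diff(R_c^\pi)^c(s)$, and the pure-jump contribution $\sum_{0<s\leq t\wedge\tau_n}e^{-qs}[w(Y^\pi(s))-w(Y^\pi(s-))]$ coming from jumps of $R_c^\pi$ and $R_p^\pi$.

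Next, I would couple each control contribution with its replenishment cost and estimate it via \eqref{Eq: vi condition 2} and the operator $\mathcal{M}$. The continuous part of $R_c^\pi$ combines with $\int e^{-qs}K_c\,\diff (R_c^\pi)^c(s)$ into $\int e^{-qs}(w'(Y^\pi(s-))+K_c)\,\diff(R_c^\pi)^c(s)\geq 0$ by \eqref{Eq: vi condition 2}. For a jump of $R_c^\pi$ of size $l\geq 0$, one writes $w(y+l)-w(y)+K_c l = \int_0^l(w'(y+u)+K_c)\,\diff u \geq 0$. For a Poisson replenishment of size $l_i\geq 0$ at $T(i)\in\mathcal{T}$, the definition of $\mathcal{M}$ gives
\begin{equation*}
w(Y^\pi(T(i)-)+l_i)-w(Y^\pi(T(i)-))+K_p l_i \;\geq\; \mathcal{M} w(Y^\pi(T(i)-))-w(Y^\pi(T(i)-)),
\end{equation*}
and compensating the Poisson sum by its $\mathbb{F}$-predictable compensator converts the right-hand side into $\int_0^{t\wedge\tau_n}e^{-qs}\lambda(\mathcal{M} w - w)(Y^\pi(s-))\,\diff s$ up to a local martingale.

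Adding the running holding cost $\int_0^{t\wedge\tau_n}e^{-qs}f(Y^\pi(s))\,\diff s$ to both sides, the coefficient of $\diff s$ in the remaining drift becomes exactly $(\mathcal{L} -q)w + \lambda(\mathcal{M} w - w) + f$, which is nonnegative by \eqref{Eq: vi condition 1}. Taking expectations kills the (local) martingale terms after a standard localization, and rearranging yields, for every $\pi\in\Pi$ and every $t,n$,
\begin{equation*}
w(x) \,\leq\, \mathbb{E}_x\bigl[e^{-q(t\wedge\tau_n)}w(Y^\pi(t\wedge\tau_n))\bigr] + \mathbb{E}_x\Big[\int_0^{t\wedge\tau_n}e^{-qs}f(Y^\pi(s))\,\diff s + \int_{[0,t\wedge\tau_n]}e^{-qs}(K_c\,\diff R_c^\pi(s)+K_p\,\diff R_p^\pi(s))\Big].
\end{equation*}
Sending $t,n\uparrow\infty$, the first expectation has nonpositive limsup by \eqref{Eq: limsup}, while the second converges to $v^\pi(x)$ by monotone convergence together with admissibility of $\pi$. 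Hence $w(x)\leq v^\pi(x)$ for every $\pi\in\Pi$, so $w(x)\leq v(x)$; combined with the standing hypothesis that $w$ is itself the cost of some admissible policy, we obtain $w(x)=v(x)$.

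The main obstacle I anticipate is the rigorous handling of the stopping-localization and limit arguments: verifying that the local-martingale terms from It\^o's formula vanish under $\mathbb{E}_x[\cdot]$ at $t\wedge\tau_n$ when $X$ is of unbounded variation (requiring simultaneous control of the Gaussian component and the compensated Poissonian jump measure of $X$), and then justifying the passage $n\uparrow\infty$ using the polynomial growth of $w$, Assumption \ref{asm: on X}, and the integrability conditions in the definition of $\Pi$. A secondary subtlety is the use of It\^o's formula in the bounded-variation case, where $w$ is only $C^1$, and the possible failure of $w$ to be $C^2$ at the lower barrier $a^*$ when $X$ is of unbounded variation; this can be handled using the known behaviour of $W^{(q)}$ near zero together with the fact that $Y^\pi$ spends zero Lebesgue time at any single point.
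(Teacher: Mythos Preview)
Your proposal is correct and is precisely the standard verification argument that the paper invokes; note that the paper actually omits the proof of this lemma, citing \cite{avram_exit_2004, baurdoux_optimality_2015, perez_optimal_2020} for the (by now routine) It\^o/localization scheme you describe. One small remark: in the final limit step you appeal to monotone convergence, but since $f$ need not be nonnegative you should instead use dominated convergence, which is justified by the admissibility conditions $\mathbb{E}_x[\int_0^\infty e^{-qs}|f(Y^\pi(s))|\,\diff s]<\infty$ and $\mathbb{E}_x[\int_{[0,\infty)}e^{-qs}(\diff R_c^\pi(s)+\diff R_p^\pi(s))]<\infty$.
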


In the remainder of this section, we show that the conditions in Lemma \ref{Lemma: verification lemma} are satisfied for the function $v_{a^*, b^*}$. First, we establish certain properties of the function $v_{a^*,b^*}$ for the computation of $\mathcal{M} v_{a^*,b^*}$.
\begin{proposition}\label{Prop: convexity}
    It holds that:
\begin{enumerate}
    \item[(1)]  $v^{f'}_{a^*, b^*}(x) = v'_{a^*, b^*}(x)$ for all $x \in \mathbb{R}$, 
    \item[(2)] $x \mapsto v_{a^*, b^*}(x)$ is convex on $\mathbb{R}$,
    \item[(3)] $v'_{a^*, b^*}(x) + K_c \geq 0$ for all $x \in \mathbb{R}$.
\end{enumerate}
\end{proposition}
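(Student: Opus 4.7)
The plan is to handle the three claims in order, leveraging the characterization $\Gamma(a^*, b^*) = \gamma(a^*, b^*) = 0$ from condition $\mathfrak{C}$ together with the smooth-pasting identities collected in Proposition \ref{Prop: iff conditions}.

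For part (1), I would simply compare expressions. Substituting $\Gamma(a^*, b^*) = 0$ and $\gamma(a^*, b^*) = 0$ into the representation \eqref{Eq: NPV derivative 2} of $v'_{a^*, b^*}$ kills the last two terms, while substituting the same identities into \eqref{Eq: v^f'} kills the last term. Both formulas then collapse to the common expression
\[
-\mathcal{H}^{(q,\lambda)}_{b^*}(x, a^*; \tilde{f}') + \Bigl(\tfrac{\lambda}{\lambda+q}K_p - K_c\Bigr)\tfrac{\lambda+q}{q},
\]
so (1) holds for $x \in \mathbb{R}\setminus\{a^*\}$. At $x = a^*$, Proposition \ref{Prop: iff conditions}(3) gives that $v'_{a^*, b^*}$ is continuous with $v'_{a^*, b^*}(a^*) = -K_c$, and the continuity of $v^{f'}_{a^*, b^*}$ at $a^*$ follows because the policy enforces instant continuous reflection at $a^*$, so the law of $Y^{a^*, b^*}$ started at $a^*{-}$ agrees with that started at $a^*$; this closes the identity at the single remaining point.

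For part (2), I use (1) to rewrite the derivative as an expectation, $v'_{a^*, b^*}(x) = v^{f'}_{a^*, b^*}(x) = \mathbb{E}_x\bigl[\int_0^\infty e^{-qt} f'(Y^{a^*, b^*}(t))\,\diff t\bigr]$, and then establish monotonicity via a pathwise coupling. Given $x_1 < x_2$, couple the two processes with the same demand $D$ and the same Poisson arrival times $\mathcal{T}$. Between arrivals, each process is driven by the common Lévy noise with continuous Skorokhod reflection at $a^*$; the Skorokhod map preserves the order of sample paths, so $Y^{a^*, b^*}_{x_1}(t) \leq Y^{a^*, b^*}_{x_2}(t)$. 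At each Poisson time the update is $y \mapsto y \vee b^*$, a nondecreasing map, so the order persists. Since $f$ is convex, $f'$ is nondecreasing, and the pathwise ordering then gives $v^{f'}_{a^*, b^*}$ nondecreasing. Combined with (1), $v'_{a^*, b^*}$ is nondecreasing, which is the convexity of $v_{a^*, b^*}$.

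For part (3), Proposition \ref{Prop: iff conditions}(3) applied at $(a^*, b^*)$ gives $v'_{a^*, b^*}(a^*) = -K_c$. By (2), $v'_{a^*, b^*}$ is nondecreasing, so $v'_{a^*, b^*}(x) \geq -K_c$ for $x \geq a^*$. For $x < a^*$, the policy immediately replenishes to $a^*$, so $v_{a^*, b^*}(x) = K_c(a^* - x) + v_{a^*, b^*}(a^*)$ and thus $v'_{a^*, b^*}(x) = -K_c$ there, giving $v'_{a^*, b^*}(x) + K_c \geq 0$ throughout $\mathbb{R}$. The only place that requires genuine thought is the pathwise coupling in (2), and even that is routine once one notes that both the Skorokhod reflection and the Poissonian lift $y \mapsto y \vee b^*$ are monotone operations; parts (1) and (3) are essentially bookkeeping once the smooth-pasting equations $\Gamma = \gamma = 0$ are invoked.
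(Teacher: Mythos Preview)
Your proposal is correct and follows essentially the same approach as the paper: substitute $\Gamma(a^*,b^*)=\gamma(a^*,b^*)=0$ into \eqref{Eq: NPV derivative 2} and \eqref{Eq: v^f'} for (1), invoke pathwise monotonicity of $Y^{a^*,b^*}$ in the starting point together with the convexity of $f$ for (2), and combine (2) with $v'_{a^*,b^*}\equiv -K_c$ on $(-\infty,a^*]$ for (3). Your write-up is in fact slightly more explicit than the paper's, spelling out why the Skorokhod reflection and the Poissonian lift $y\mapsto y\vee b^*$ preserve order, and separately addressing the point $x=a^*$.
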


\begin{proof}
    (1) Since $(a^*,b^*)$ satisfies $\mathfrak{C}$ (which yields $\Gamma(a^*, b^*) = \gamma(a^*, b^*) = 0$), we obtain the desired equality after substituting this condition into \eqref{Eq: NPV derivative 2} and \eqref{Eq: v^f'}, with $a = a^*$ and $b = b^*$.
    \\
    (2) By the convexity of $f$ and the monotonicity of $Y^{a^*, b^*}$ in the starting point almost surely, the function $x \mapsto v^{f'}_{a^*, b^*}(x)$ is non-decreasing. Hence, by (1), $x \mapsto v'_{a^*, b^*}(x)$ is non-decreasing, which implies $v_{a^*, b^*}$ is convex.
    \\
    (3) By (2) and the fact that $v'_{a^*, b^*}(x) = v'_{a^*, b^*}(a^*) = -K_c$ for $x \in (-\infty, a^*]$ (see Equation \eqref{Eq: NPV derivative 2}), statement (3) holds. 
\end{proof}

The following result is a direct consequence of the convexity of $x \mapsto v_{a^*, b^*}(x)$ and the fact that $v'_{a^*, b^*}(b^*) = -K_p$, which are due to Proposition \ref{Prop: convexity}(2) and Proposition \ref{Prop: iff conditions}(3), respectively.
\begin{corollary} \label{Corollary: generator} 
It holds that
\begin{align*}
    \mathcal{M}v_{a^*, b^*}(x) - v_{a^*, b^*}(x) &= \begin{cases}
        K_p(b^* - x) + v_{a^*, b^*}(b^*) - v_{a^*, b^*}(x), & x < b^*,\\
        0, & x \geq b^*.\\
    \end{cases}
\end{align*}
\end{corollary}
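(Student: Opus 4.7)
The plan is to show that the infimum in $\mathcal{M} v_{a^*,b^*}(x) = \inf_{l \geq 0}\{K_p l + v_{a^*,b^*}(x+l)\}$ is attained at $l = (b^* - x)^+$, from which the stated identity follows immediately by subtracting $v_{a^*,b^*}(x)$.

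First, I would fix $x \in \mathbb{R}$ and set $g(l) \coloneqq K_p l + v_{a^*,b^*}(x+l)$ for $l \geq 0$. Since $v_{a^*,b^*}$ is sufficiently smooth on $\mathbb{R}$ by Proposition \ref{Prop: iff conditions}(3) (because $(a^*,b^*)$ satisfies $\mathfrak{C}$), we have $g'(l) = K_p + v'_{a^*,b^*}(x+l)$. By Proposition \ref{Prop: convexity}(2), $v_{a^*,b^*}$ is convex, so $g'$ is non-decreasing, i.e., $g$ is convex on $[0,\infty)$. Moreover, Proposition \ref{Prop: iff conditions}(3) also gives $v'_{a^*,b^*}(b^*) = -K_p$.

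For the case $x < b^*$, the point $l^* = b^* - x > 0$ lies in the interior of $[0,\infty)$ and satisfies $g'(l^*) = K_p + v'_{a^*,b^*}(b^*) = 0$. Convexity of $g$ then yields $l^*$ as a global minimizer on $[0,\infty)$, and evaluating gives $g(l^*) = K_p(b^* - x) + v_{a^*,b^*}(b^*)$. For the case $x \geq b^*$, the monotonicity of $v'_{a^*,b^*}$ yields $g'(0) = K_p + v'_{a^*,b^*}(x) \geq K_p + v'_{a^*,b^*}(b^*) = 0$, so $g$ is non-decreasing on $[0,\infty)$, and the infimum is attained at $l = 0$, giving $g(0) = v_{a^*,b^*}(x)$.

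There is no real obstacle here; the argument is purely an application of the convexity of $v_{a^*,b^*}$ together with the smooth-fit condition $v'_{a^*,b^*}(b^*) = -K_p$. The only detail worth stating carefully is that, when $X$ is of bounded variation, ``sufficiently smooth'' only guarantees $C^1$, but this is enough since the computation of $g'$ uses only the first derivative of $v_{a^*,b^*}$.
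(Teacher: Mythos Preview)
Your proof is correct and takes essentially the same approach as the paper, which states only that the corollary is a direct consequence of the convexity of $v_{a^*,b^*}$ (Proposition \ref{Prop: convexity}(2)) and the identity $v'_{a^*,b^*}(b^*)=-K_p$ (Proposition \ref{Prop: iff conditions}(3)). You have simply spelled out the one-variable minimization argument that the paper leaves implicit.
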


By standard computations, we obtain Lemma \ref{Lemma: generator}. The proof is provided in Appendix \ref{Appendix: proof of generator}.
\begin{lemma} \label{Lemma: generator}
It holds that
\begin{align*}
    (\mathcal{L} - q) v_{a^*, b^*}(x) + f(x) &= \begin{cases}
        -\lambda\left(v_{a^*, b^*}(b^*) - v_{a^*, b^*}(x) + K_p(b^* - x)\right), &  a^* < x < b^*,\\
        0, & x \geq b^*.\\
    \end{cases}
\end{align*}
For $x \leq a^*$, 
\begin{equation*}
    (\mathcal{L}-q)v_{a^*, b^*}(x) + \lambda(\mathcal{M}v_{a^*, b^*}(x) - v_{a^*, b^*}(x)) + f(x) = \tilde{f}(x) - \tilde{f}(a^*) \geq 0. 
\end{equation*}
\end{lemma}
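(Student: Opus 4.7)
I would split the proof into three regions: $x \in (b^*, \infty)$, $x \in (a^*, b^*)$, and $x \in (-\infty, a^*]$. In each region, the hybrid barrier policy $\pi^{a^*, b^*}$ dictates an HJB-type identity for $v_{a^*, b^*}$. On $(b^*, \infty)$, the policy performs no intervention until $X$ enters $(-\infty, b^*]$, so $Y^{a^*, b^*}$ coincides with the free L\'evy process $X$ until that hitting time. A Dynkin-type argument at this stopping time, or equivalently a direct differentiation of the scale function representation in Lemma \ref{Lemma: NPV of costs} using the identities $(\mathcal{L} - q) W^{(q)}(\cdot - b^*) = 0$ and $(\mathcal{L} - q) Z^{(q)}(\cdot - b^*) = 0$ on $(b^*, \infty)$, yields $(\mathcal{L} - q) v_{a^*, b^*}(x) + f(x) = 0$. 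On $(a^*, b^*)$, the infinitesimal dynamics are those of $X$ (reflection at $a^*$ does not yet contribute), subject to Poissonian pushes up to $b^*$ at rate $\lambda$, each costing $K_p$ per unit. Conditioning on the first Poisson time and invoking the strong Markov property gives
\[
(\mathcal{L} - q) v_{a^*, b^*}(x) + \lambda\bigl( v_{a^*, b^*}(b^*) + K_p(b^* - x) - v_{a^*, b^*}(x) \bigr) + f(x) = 0,
\]
which is the first claim of the lemma after rearranging.

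\textbf{Computation below $a^*$ and continuity matching.} On $(-\infty, a^*]$ the policy replenishes instantaneously to $a^*$, so $v_{a^*, b^*}(x) = v_{a^*, b^*}(a^*) + K_c(a^* - x)$. Hence $v_{a^*, b^*}'(x) = -K_c$, $v_{a^*, b^*}''(x) = 0$, and $v_{a^*, b^*}(x+z) - v_{a^*, b^*}(x) = -K_c z$ for every $z < 0$. Substituting into \eqref{Eq: generator} and using $\psi'(0+) = \gamma + \int_{(-\infty, -1]} z\, \mu(\diff z)$ read off from \eqref{Eq: laplace exponent} yields $\mathcal{L} v_{a^*, b^*}(x) = -K_c \psi'(0+)$. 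For the obstacle operator, Proposition \ref{Prop: convexity}(2) together with $v_{a^*, b^*}'(b^*) = -K_p$ from Proposition \ref{Prop: iff conditions}(3) implies that $l \mapsto K_p l + v_{a^*, b^*}(x+l)$ is minimized at $l^* = b^* - x$, so $\mathcal{M} v_{a^*, b^*}(x) = v_{a^*, b^*}(b^*) + K_p(b^* - x)$. Collecting the pieces and using the definition \eqref{Eq: tilde f} of $\tilde{f}$ to absorb all $x$-dependence, one finds that $g(x) := (\mathcal{L} - q) v_{a^*, b^*}(x) + \lambda(\mathcal{M} v_{a^*, b^*}(x) - v_{a^*, b^*}(x)) + f(x)$ equals $\tilde{f}(x) + C$ for a constant $C$ independent of $x$. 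Sufficient smoothness of $v_{a^*, b^*}$ at $a^*$, which holds thanks to the condition $\mathfrak{C}$ via Proposition \ref{Prop: iff conditions}(3), renders $g$ continuous across $a^*$; letting $x \downarrow a^*$ in the identity already proved on $(a^*, b^*)$ yields $g(a^*) = 0$, which pins down $C = -\tilde{f}(a^*)$.

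\textbf{Non-negativity and main obstacle.} The inequality $\tilde{f}(x) - \tilde{f}(a^*) \geq 0$ on $(-\infty, a^*]$ follows from Lemma \ref{Lemma: bounds for a underline} and the construction in Section \ref{Sect: existence}, which together give $a^* < \underline{a}_2 < \bar{a}$; the convexity of $\tilde{f}$ then yields $\tilde{f}' \leq 0$ on $(-\infty, \bar{a})$, so $\tilde{f}$ is non-increasing on $(-\infty, a^*]$. The main obstacle is the derivation of the two HJB identities in the first paragraph: a purely probabilistic Dynkin argument on the Poissonized dynamics requires careful handling of local-martingale subtleties for spectrally negative L\'evy processes of infinite activity, while the analytic route using the scale function identities in Lemma \ref{Lemma: NPV of costs} is transparent but calculation-heavy. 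I would favor the latter, leveraging the bookkeeping already developed for $\mathscr{Z}^{(q, \lambda)}_{b}$ and $\mathscr{W}^{(q, \lambda)}_{b}$ in Section 3 and applying $(\mathcal{L} - q)$ termwise, a routine but lengthy verification.
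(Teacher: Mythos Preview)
Your proposal is correct and, once you commit to the analytic route for the regions $(a^*,b^*)$ and $[b^*,\infty)$, it coincides with the paper: the paper applies $(\mathcal{L}-q)$ termwise to the scale-function representation of $v_{a^*,b^*}$ from Lemma \ref{Lemma: NPV of costs}, using identities such as $(\mathcal{L}-(q+\lambda))Z^{(q+\lambda)}(\cdot-a^*)=0$, $(\mathcal{L}-(q+\lambda))\rho_{a^*}^{(q+\lambda)}(\cdot;f)=f$, and the analogous facts for $\mathscr{W}^{(q,\lambda)}_{b^*}$, $\mathscr{Z}^{(q,\lambda)}_{b^*}$.

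The one genuine difference is in your handling of $x\leq a^*$. Both you and the paper start from the linearity $v_{a^*,b^*}(x)=v_{a^*,b^*}(a^*)+K_c(a^*-x)$ and compute $\mathcal{L}v_{a^*,b^*}(x)=-K_c\psi'(0+)$, arriving at $g(x)=\tilde{f}(x)+C$ for a constant $C$. The paper then identifies $C$ by substituting the explicit scale-function expressions for $v_{a^*,b^*}(a^*)$ and $v_{a^*,b^*}(b^*)$ (equations \eqref{Eq: vi misc 1} and \eqref{Eq: vi misc 9}) and simplifying. You instead observe that $g\equiv 0$ on $(a^*,b^*)$ by combining the first part of the lemma with Corollary \ref{Corollary: generator}, and then use the continuity of $(\mathcal{L}-q)v_{a^*,b^*}$ across $a^*$ (available thanks to sufficient smoothness from Proposition \ref{Prop: iff conditions}(3)) to force $C=-\tilde{f}(a^*)$. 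Your matching argument is shorter and sidesteps one explicit computation; the paper's direct evaluation is more self-contained but heavier. The paper uses the same continuity-of-$\mathcal{L}v$ device at $b^*$ to close the case $x=b^*$, so your argument introduces no new ingredient.
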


The polynomial growth of $v_{a^*, b^*}$ may also be established through a standard argument. It is a direct consequence of the polynomial growth of $f$ and the $\mathbb{P}$-almost sure bound:
\begin{equation*}
    \left|Y^{a^*, b^*}_y(t) - Y^{a^*, b^*}_x(t)\right| \vee \left|R^{a^*, b^*}_{p,y}(t) - R^{a^*, b^*}_{p,x}(t)\right| \vee \left|R^{a^*, b^*}_{c,y}(t) - R^{a^*, b^*}_{c,x}(t)\right| \leq y - x, \quad y > x, \, t \geq 0,
\end{equation*}
where $Y^{a^*, b^*}_y$ and $Y^{a^*, b^*}_x$ denote the $\pi^{a^*, b^*}$-controlled processes driven by $(y + X(t); t \geq 0)$ and $(x + X(t); t \geq 0)$, respectively, and the other terms are defined analogously. Thus, we state Lemma \ref{Lemma: polynomial growth} without proof.
\begin{lemma}\label{Lemma: polynomial growth}
    The function $x \mapsto v_{a^*, b^*}(x)$ is of polynomial growth.
\end{lemma}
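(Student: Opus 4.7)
The approach is a pathwise coupling argument. Fix a reference point $x_0 \in \mathbb{R}$. By the closed-form expressions derived in Lemmas \ref{Lemma: inventory cost} and \ref{Lemma: control costs} (whose finiteness rests on Assumption \ref{asm: on X}), the quantities $\mathbb{E}\bigl[\int_0^\infty e^{-qt}|Y^{a^*,b^*}_{x_0}(t)|^N\, \diff t\bigr]$ and the discounted expected replenishment volumes under $\pi^{a^*,b^*}$ starting at $x_0$ are finite. The goal is to transfer this control at $x_0$ to arbitrary $x \in \mathbb{R}$ using the almost-sure bound stated just before the lemma.

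Decomposing $v_{a^*,b^*}(x) = v^f_{a^*,b^*}(x) + v^r_{a^*,b^*}(x)$ as in \eqref{def_v_a_b}, Assumption \ref{asm: f convexity} yields constants $C > 0$ and $N \in \mathbb{N}$ with $|f(y)| \leq C(1 + |y|^N)$ for all $y \in \mathbb{R}$. The reverse triangle inequality combined with the stated pathwise coupling gives $|Y^{a^*,b^*}_x(t)| \leq |Y^{a^*,b^*}_{x_0}(t)| + |x - x_0|$, and together with $(a+b)^N \leq 2^{N-1}(a^N + b^N)$ for $a,b \geq 0$ produces
\begin{equation*}
|f(Y^{a^*,b^*}_x(t))| \leq C'\bigl(1 + |Y^{a^*,b^*}_{x_0}(t)|^N + |x - x_0|^N\bigr)
\end{equation*}
for some $C' > 0$. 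Multiplying by $e^{-qt}$, taking expectations, and integrating in $t$ yields $|v^f_{a^*,b^*}(x)| \leq C_0 + (C'/q)|x - x_0|^N$ for a finite $C_0$. For the replenishment term, the same coupling gives $R^{a^*,b^*}_{c,x}(t) \leq R^{a^*,b^*}_{c,x_0}(t) + |x - x_0|$ and the analogous inequality for $R^{a^*,b^*}_p$ (noting that for $y > x$ one has $R^{a^*,b^*}_{c,y} \leq R^{a^*,b^*}_{c,x}$ and similarly for $R_p$, by the monotonicity of the policy in the initial condition). An integration-by-parts identity of the form $\int_{[0,\infty)} e^{-qt}\, \diff R(t) = q\int_0^\infty e^{-qt} R(t)\, \diff t$ (justified by the at-most polynomial growth of $R$ in $t$) then gives
\begin{equation*}
\mathbb{E}\Bigl[\int_{[0,\infty)} e^{-qt}\bigl(K_c\, \diff R^{a^*,b^*}_{c,x}(t) + K_p\, \diff R^{a^*,b^*}_{p,x}(t)\bigr)\Bigr] \leq v^r_{a^*,b^*}(x_0) + (K_c + K_p)|x - x_0|.
\end{equation*}
Summing the two bounds produces $|v_{a^*,b^*}(x)| \leq C_1(1 + |x|^N)$, which is the claimed polynomial growth.

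The main obstacle is verifying the finiteness of the reference-point moment $\mathbb{E}\bigl[\int_0^\infty e^{-qt}|Y^{a^*,b^*}_{x_0}(t)|^N\, \diff t\bigr]$. This hinges on Assumption \ref{asm: on X}, which provides exponential moments of the L\'evy measure $\mu$ and hence all polynomial moments of $X(t)$ with at most polynomial growth in $t$. These moment estimates propagate to $Y^{a^*,b^*}_{x_0}$ because the regular reflection at $a^*$ only raises the path (leaving upper-tail moments unaffected), and at Poisson epochs the discounted replenishment jump is bounded by $(b^* - X(t))_+$, whose moments are controlled by those of $X$. Once this reference-point moment bound is secured, the rest of the argument is bookkeeping via the coupling.
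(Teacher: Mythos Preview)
Your argument is correct and follows precisely the route the paper indicates: exploit the pathwise coupling bound $|Y^{a^*,b^*}_y(t)-Y^{a^*,b^*}_x(t)|\vee|R^{a^*,b^*}_{p,y}(t)-R^{a^*,b^*}_{p,x}(t)|\vee|R^{a^*,b^*}_{c,y}(t)-R^{a^*,b^*}_{c,x}(t)|\leq y-x$ together with the polynomial growth of $f$ to transfer finiteness from a fixed reference point to all $x$. The only cosmetic slip is that the replenishment bound should read $(|K_c|+|K_p|)\,|x-x_0|$ (since the paper allows $K_p\in\mathbb{R}$) and should be stated as a two-sided bound on $v^r_{a^*,b^*}(x)$; the integration-by-parts step you use delivers this immediately.
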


The proof of Lemma \ref{Lemma: verification limit} is presented in Appendix \ref{Appendix: proof of verification limit}.
\begin{lemma}\label{Lemma: verification limit}
    For all admissible policies $\pi \in \Pi$ and $x \in \mathbb{R}$, condition \eqref{Eq: limsup} holds with $w = v_{a^*, b^*}$. 
\end{lemma}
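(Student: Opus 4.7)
The plan is to use the polynomial growth of $v_{a^*,b^*}$ established in Lemma~\ref{Lemma: polynomial growth} to reduce the estimate to bounds on moments of $Y^\pi$. By that lemma, there exist $C>0$ and $N\in\mathbb{N}$ with $|v_{a^*,b^*}(y)|\le C(1+|y|^N)$. Since the conclusion requires only the limsup to be non-positive, and $v_{a^*,b^*}(y)^+\le C(1+|y|^N)$, it suffices to show
\[
\limsup_{t,n\to\infty}\mathbb{E}_x\bigl[e^{-q(t\wedge\tau_n)}(1+|Y^\pi(t\wedge\tau_n)|^N)\bigr]=0.
\]
Decomposing $Y^\pi(t)=X(t)+R_c^\pi(t)+R_p^\pi(t)$ and applying $|a+b+c|^N\le 3^{N-1}(|a|^N+|b|^N+|c|^N)$, this further reduces to controlling separately $e^{-qt}\mathbb{E}_x[|X(t)|^N]$ and $e^{-qt}\mathbb{E}_x[M^\pi(t)^N]$, where $M^\pi(t):=R_c^\pi(t)+R_p^\pi(t)$.

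For the Lévy-process term, Assumption~\ref{asm: on X} ensures $X$ has finite moments of all orders, with $\mathbb{E}_x[|X(t)|^N]$ growing at most polynomially in $t$; the exponential discount $e^{-qt}$ then dominates and the limit is zero. For the first moment of $M^\pi$, the admissibility condition $\mathbb{E}_x[\int_{[0,\infty)}e^{-qt}\,\diff M^\pi(t)]<\infty$ combined with integration by parts gives the chain
\[
e^{-qT}M^\pi(T)=\int_{[0,T]}e^{-qT}\,\diff M^\pi(t)\le\int_{[0,T]}e^{-qt}\,\diff M^\pi(t)\le\int_{[0,\infty)}e^{-qt}\,\diff M^\pi(t),
\]
which provides an integrable envelope; monotonicity of $M^\pi$ together with finiteness of $\int_0^\infty e^{-qt}M^\pi(t)\,\diff t$ (also obtained by integration by parts) forces $e^{-qT}M^\pi(T)\to 0$ pathwise, and dominated convergence upgrades this to convergence in $L^1(\mathbb{P}_x)$.

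The main obstacle is extending this decay to higher moments $\mathbb{E}_x[e^{-qt}M^\pi(t)^N]$ when $N\ge 2$, since admissibility controls only the first discounted moment of $M^\pi$ directly. To address this, I would combine admissibility condition~(1), $\mathbb{E}_x[\int_0^\infty e^{-qt}|f(Y^\pi(t))|\,\diff t]<\infty$, with the coercivity of $|f|$ implied by its convexity and Assumption~\ref{asm: f slope}, which forces $|f(y)|$ to grow at least linearly as $|y|\to\infty$. Iterating this transfer of integrability, one obtains $\mathbb{E}_x[\int_0^\infty e^{-qt}|Y^\pi(t)|^N\,\diff t]<\infty$; a second integration-by-parts argument, applied to the non-decreasing envelope $\sup_{s\le t}|Y^\pi(s)|^N$, then extracts the pointwise-in-$t$ decay of $\mathbb{E}_x[e^{-qt}|Y^\pi(t)|^N]$ along a subsequence. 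The truncation by $\tau_n$ plays no essential role, since all estimates are uniform in $n$: on $\{t<\tau_n\}$ the process is bounded, and on $\{t\ge\tau_n\}$ the envelope bounds above apply with $t\wedge\tau_n$ substituted for $t$. Combining these pieces produces the required limsup.
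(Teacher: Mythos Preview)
Your approach has a genuine gap at exactly the point you flag as the ``main obstacle.'' From admissibility condition~(1) and the at-least-linear coercivity of $|f|$, you correctly obtain $\mathbb{E}_x\bigl[\int_0^\infty e^{-qt}|Y^\pi(t)|\,\diff t\bigr]<\infty$, but there is no mechanism to ``iterate'' this to $\mathbb{E}_x\bigl[\int_0^\infty e^{-qt}|Y^\pi(t)|^N\,\diff t\bigr]<\infty$ for $N\ge 2$: that would require $|f|$ to have a polynomial \emph{lower} bound of degree $N$, which is nowhere assumed (e.g.\ $f$ could be linear on one side and cubic on the other, so $v_{a^*,b^*}$ has cubic growth while $|f|$ has only linear coercivity). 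Even granting the integrability, your extraction of pointwise-in-$t$ decay via the running supremum envelope is problematic: $\sup_{s\le t}|Y^\pi(s)|^N$ is monotone, but you have no bound on its discounted integral from that of $|Y^\pi(t)|^N$, and decay ``along a subsequence'' does not give the full limsup.

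The paper avoids these moment-matching issues entirely by exploiting the HJB structure already established. Applying It\^o's formula to the \emph{uncontrolled} process $X$ and using the variational inequality $(\mathcal{L}-q)v_{a^*,b^*}+\lambda(\mathcal{M}v_{a^*,b^*}-v_{a^*,b^*})+f\ge 0$ together with $\mathcal{M}v_{a^*,b^*}-v_{a^*,b^*}\le 0$ yields the one-sided bound $v_{a^*,b^*}(y)\le\mathbb{E}_y\bigl[\int_0^\infty e^{-qs}f(X(s))\,\diff s\bigr]$ for every $y$. Evaluating at $y=Y^\pi(t\wedge\tau_n)$ and invoking the strong Markov property and spatial homogeneity turns the target expectation into $\mathbb{E}_x\bigl[\int_{t\wedge\tau_n}^\infty e^{-qs}f\bigl(X(s)+R_c^\pi(t\wedge\tau_n)+R_p^\pi(t\wedge\tau_n)\bigr)\,\diff s\bigr]$. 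Since $X(s)\le X(s)+R^\pi(t\wedge\tau_n)\le Y^\pi(s)$ for $s\ge t\wedge\tau_n$ and $f$ is convex, this is dominated by the maximum of the tail integrals $\int_{t\wedge\tau_n}^\infty e^{-qs}f(X(s))\,\diff s$ and $\int_{t\wedge\tau_n}^\infty e^{-qs}f(Y^\pi(s))\,\diff s$, both of which vanish by the admissibility integrability conditions. This route never needs to compare the growth degree of $v_{a^*,b^*}$ with the coercivity degree of $f$.
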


We are now ready to establish the optimality of a candidate policy, as stated in Theorem \ref{Thm: optimal policy}.
\begin{theorem}\label{Thm: optimal policy}
The policy $\pi^{a^*,b^*}$ is optimal within the set of admissible policies $\Pi$, with 
\[v_{a^*, b^*}(x) = \inf_{\pi \in \Pi} v^\pi(x), \quad x\in \mathbb{R}.\]
\end{theorem}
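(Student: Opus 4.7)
The plan is to apply the verification lemma (Lemma \ref{Lemma: verification lemma}) with $w = v_{a^*, b^*}$. Since the existence of $(a^*, b^*)$ satisfying condition $\mathfrak{C}$ has been secured in Theorem \ref{Thm: existence uniqueness}, the policy $\pi^{a^*, b^*}$ is admissible, and all we need to do is check one by one that $v_{a^*, b^*}$ meets the hypotheses of Lemma \ref{Lemma: verification lemma}. Most of the ingredients have already been assembled in Section \ref{Sect: optimality}, so the proof is largely a matter of bookkeeping.

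First, I would verify sufficient smoothness: since $(a^*, b^*)$ satisfies $\mathfrak{C}$, Proposition \ref{Prop: iff conditions} (item (3)) immediately gives that $x \mapsto v_{a^*, b^*}(x)$ is sufficiently smooth on $\mathbb{R}$ in the sense of Definition \ref{def: sufficiently smooth}. Polynomial growth is given by Lemma \ref{Lemma: polynomial growth}. The derivative inequality $v'_{a^*, b^*}(x) + K_c \geq 0$ (condition \eqref{Eq: vi condition 2}) is exactly Proposition \ref{Prop: convexity}(3). The limit condition \eqref{Eq: limsup} for every admissible $\pi \in \Pi$ is delivered by Lemma \ref{Lemma: verification limit}.

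The main task that remains is to check the quasi-variational inequality \eqref{Eq: vi condition 1}, and here the work consists of combining Lemma \ref{Lemma: generator} with Corollary \ref{Corollary: generator}. I would split into three regions:
\begin{itemize}
\item[(i)] For $x \geq b^*$, Corollary \ref{Corollary: generator} gives $\mathcal{M} v_{a^*,b^*}(x) - v_{a^*,b^*}(x) = 0$, while Lemma \ref{Lemma: generator} gives $(\mathcal{L} - q) v_{a^*,b^*}(x) + f(x) = 0$, so \eqref{Eq: vi condition 1} holds with equality.
\item[(ii)] For $a^* < x < b^*$, the expression from Corollary \ref{Corollary: generator}, namely $\lambda[K_p(b^* - x) + v_{a^*,b^*}(b^*) - v_{a^*,b^*}(x)]$, exactly cancels the right-hand side of the formula in Lemma \ref{Lemma: generator}, again giving equality in \eqref{Eq: vi condition 1}.
\item[(iii)] For $x \leq a^*$, Lemma \ref{Lemma: generator} directly asserts $(\mathcal{L}-q)v_{a^*, b^*}(x) + \lambda(\mathcal{M}v_{a^*, b^*}(x) - v_{a^*, b^*}(x)) + f(x) = \tilde{f}(x) - \tilde{f}(a^*) \geq 0$, where the last inequality uses that $a^* < \bar{a}$ (since $a^* < \underline{a}_2 < \bar{a}$ by Lemma \ref{Lemma: bounds for a underline}) and the monotonicity of $\tilde f'$ implied by the convexity of $\tilde f$; actually one needs $\tilde f'(x) \leq 0$ on $(-\infty, a^*]$, which follows because $a^* \leq \underline{a}_2 \leq \bar a$ and $\tilde f'$ is non-decreasing with $\tilde f'(\bar a) \geq 0$.
\end{itemize}

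There is no real obstacle; the hard work has already been done in the preparatory propositions and lemmas, especially Lemmas \ref{Lemma: NPV derivatives}, \ref{Lemma: generator}, \ref{Lemma: verification limit} and Propositions \ref{Prop: iff conditions}, \ref{Prop: convexity}. The only place where one must be slightly careful is the sign of $\tilde f(x) - \tilde f(a^*)$ on $\{x \leq a^*\}$ and how it relates to the location of $a^*$ relative to $\bar a$, but this is immediate from the convexity of $\tilde f$ together with $a^* < \bar{a}$. Once all five hypotheses of Lemma \ref{Lemma: verification lemma} are verified, the conclusion $v_{a^*, b^*}(x) = \inf_{\pi \in \Pi} v^\pi(x)$ for every $x \in \mathbb{R}$ is immediate, and since $v_{a^*, b^*}$ is by construction the cost of the admissible policy $\pi^{a^*, b^*}$, this policy is optimal.
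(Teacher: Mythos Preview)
Your proposal is correct and follows essentially the same route as the paper's proof: verify the hypotheses of Lemma \ref{Lemma: verification lemma} by invoking Proposition \ref{Prop: iff conditions}(3), Proposition \ref{Prop: convexity}(3), Lemma \ref{Lemma: polynomial growth}, Lemma \ref{Lemma: verification limit}, and the combination of Corollary \ref{Corollary: generator} with Lemma \ref{Lemma: generator} for the variational inequality \eqref{Eq: vi condition 1}. The only cosmetic difference is that you spell out the three-region case split and re-derive the inequality $\tilde f(x)-\tilde f(a^*)\ge 0$ for $x\le a^*$, whereas the paper simply cites Lemma \ref{Lemma: generator}, whose statement already includes that inequality.
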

\begin{proof}
    By Propositions \ref{Prop: iff conditions}(3) and \ref{Prop: convexity}(3), $v_{a^*, b^*}$ is sufficiently smooth on $\mathbb{R}$ with $v_{a^*, b^*}'\geq -K_c$. By Corollary \ref{Corollary: generator} and Lemma \ref{Lemma: generator}, the variational inequality \eqref{Eq: vi condition 1} is satisfied for all $x \in \mathbb{R}$. It suffices to note that $v_{a^*, b^*}$, the total cost under the hybrid policy $\pi^{a^*, b^*}$, is of polynomial growth (as shown in Lemma \ref{Lemma: polynomial growth}), and that Lemma \ref{Lemma: verification limit} implies condition \eqref{Eq: limsup} is fulfilled. Thus, applying Lemma 
    \ref{Lemma: verification lemma}, $v_{a^*, b^*}(x) = v(x)$ for $x \in \mathbb{R}$.
\end{proof}

\subsection{Significance of Assumption \ref{asm: f slope}(1).}\label{Sect: asm on f}
We conclude the analysis by examining cases in which Assumption \ref{asm: f slope}(1) is violated, with all other assumptions satisfied. For these cases, we show that the pure discounted replenishment policy, with a suitable barrier $b^* \in \mathbb{R}$ as studied in \cite{perez_optimal_2020}, is optimal. We note that a pure discounted replenishment policy with barrier $b\in \mathbb{R}$ is a special case of the hybrid barrier policy, corresponding to a lower barrier $a = -\infty$ and an upper barrier $b$.

Since Assumption \ref{asm: f slope}(1) is violated, we must have
\begin{equation}\label{Eq: appen misc 1}
    \tilde{f}'(x) \geq 0 \implies f'(x) \geq -qK_c + \lambda(K_p - K_c), \quad x \in \mathbb{R}.
\end{equation}
Denote by $Y^{-\infty, b}$ the replenished inventory process under the pure discounted replenishment policy with barrier $b \in \mathbb{R}$. The processes $R_c^{-\infty, b}$ and $R_p^{-\infty, b}$ are defined analogously, with $R_c^{-\infty, b}$ almost surely uniformly zero. For $x \in \mathbb{R}$, we define
\begin{align*}
    v_{-\infty, b}(x) &\coloneqq \mathbb{E}_{x}\left[\int^\infty_0 e^{-qt} f(Y^{-\infty, b}(t))\, \diff  t\right] + K_p\mathbb{E}_{x}\left[ \int_{[0, \infty)} e^{-qt} \, \diff R_p^{-\infty, b}(t)\right],\\
    v_{-\infty, b}^{f'}(x) &\coloneqq \mathbb{E}_{x}\left[\int^\infty_0e^{-qt} f'(Y^{-\infty, b}(t))\, \diff  t\right].
\end{align*}

As Assumption \ref{asm: f slope}(2) holds, Lemma 5.5 together with Proposition 5.8 in \cite{perez_optimal_2020} shows that there exists some $b^* \in \mathbb{R}$ such that $v_{-\infty, b^*}' = v_{-\infty, b^*}^{f'}$ and
\begin{equation}\label{Eq: pure periodic misc 1}
    v_{-\infty, b^*}^{f'}(b^*) = \mathbb{E}_{b^*}\left[\int^\infty_0e^{-qt} f'(Y^{-\infty, b^*}(t))\, \diff  t\right] = -K_p.
\end{equation}
The following decomposition proves useful, 
\begin{equation}\label{Eq: pure periodic misc 0}
    v_{-\infty, b^*}^{f'}(x) = \mathbb{E}_{x}\left[\int^{T(1)}_0 e^{-qt} f'(Y^{-\infty, b^*}(t))\, \diff  t + \int^\infty_{T(1)} e^{-qt} f'(Y^{-\infty, b^*}(t))\, \diff  t\right],
\end{equation}
where $T(1)$ is an independent exponential time with rate $\lambda$. For the first term on the right-hand side of \eqref{Eq: pure periodic misc 0}, by \eqref{Eq: appen misc 1}, 
\begin{equation*}
    \mathbb{E}_{x}\left[\int^{T(1)}_0 e^{-qt} f'(Y^{-\infty, b^*}(t))\, \diff  t \right] \geq \frac{-qK_c + \lambda(K_p - K_c)}{q}\mathbb{E}_{x}\left[1 - e^{-qT(1)}\right] = \frac{-qK_c + \lambda(K_p - K_c)}{\lambda + q}. 
\end{equation*}

Note that $Y^{-\infty, b^*}$ is a strong Markov process with $Y^{-\infty, b^*}(T(1)) \geq b^*$ almost surely. Together with the independence of $T(1)$, these observations imply that the following inequality holds for the second term in \eqref{Eq: pure periodic misc 0}:
\begin{equation*}
    \mathbb{E}_{x}\left[\int^\infty_{T(1)} e^{-qt} f'(Y^{-\infty, b^*}(t))\, \diff  t\right] \geq \mathbb{E}_{x}\left[e^{-qT(1)}\right](-K_p) = -K_p\frac{\lambda}{\lambda + q}. 
\end{equation*}
Hence, we obtain the inequality
\begin{equation*}
    v_{-\infty, b^*}^{f'}(x) \geq \frac{-qK_c + \lambda(K_p - K_c)}{\lambda + q} - \frac{\lambda}{\lambda + q} K_p = -K_c,
\end{equation*}
from which it follows that $v_{-\infty, b^*}'(x) = v_{-\infty, b^*}^{f'}(x) \geq -K_c$, for $x \in \mathbb{R}$. Note that this implies \eqref{Eq: vi condition 2} is satisfied for the function $v_{-\infty, b^*}$. The other conditions, including smoothness, growth rate, and \eqref{Eq: vi condition 1}, were established in \cite{perez_optimal_2020}. Then, condition \eqref{Eq: limsup} follows as in the proof of Lemma \ref{Lemma: verification limit}. By applying Lemma \ref{Lemma: verification lemma}, we establish the optimality of the pure discounted replenishment policy with barrier $b^*$. 

\section{Numerical study}
This section presents a numerical study to illustrate the main results of the paper. We begin by identifying a pair of candidate barriers, followed by an illustration of the optimality result stated in Theorem \ref{Thm: optimal policy}. We then perform a sensitivity analysis with respect to the unit costs $K_c$ and $K_p$, and conclude with an examination of the cost savings achieved by an optimal hybrid barrier policy relative to pure discounted and pure regular replenishment policies. 

\subsection{Setting.}\label{Sect: numerical setting}
Suppose that the inventory-holding cost function $f$ is given by $f(x) = x^2$. This choice of the function $f$ guarantees that Assumption \ref{asm: f slope} is satisfied for any pair of $K_c > K_p$. Unless otherwise specified, we fix $K_c = 10$ and $K_p = 2$ throughout the following analysis. The uncontrolled inventory process $X$ is modeled by 
\begin{equation*}
    X(t) \coloneqq x + t + B(t) - \sum^{N(t)}_{n = 1} Z_n,   
\end{equation*}
where $B = (B(t); t\geq 0)$ is a standard Brownian motion, $N = (N(t); t\geq 0)$ is a Poisson process with an arrival rate of $0.2$, and $\{Z_n\}_{n \geq 1}$ is a sequence of i.i.d. exponential random variables with rate $1$. It is known from Example 3.1 of \cite{egami_phase-type_2014} that the $q$-scale function of $X$ is
\begin{equation}\label{Eq: W numerical}
    W^{(q)}(x) = \frac{e^{\Phi(q) x}}{\psi'(\Phi(q))} - \sum^{2}_{i = 1} B_{i, q} e^{-\xi_{i, q} x}, 
\end{equation}
where, for $i = 1, 2$, we have $B_{i, q} = -1/\psi'(-\xi_{i, q})$, with $\xi_{i, q}$ defined such that $\psi(-\xi_{i, q}) = q$ and $-\xi_{i, q} < 0$. We further fix $q = 0.05$ and allow $\lambda$ to vary, where $\lambda$ is the Poisson arrival rate.

\subsection{Optimal barriers.}\label{Sect: numerical barriers}
By setting $\lambda = 2$ along with the parameters specified in Section \ref{Sect: numerical setting}, we obtain an example of the case $\underline{a}_1 \leq \underline{a}_2$. From the analysis in Section \ref{Sect: case 1}, we know that $a^\dagger < a^* < \underline{a}_1$. Hence, $a^*$ can be identified by finding a root to the function $a \mapsto \overline{\Gamma}(a)$ within the interval $(a^\dagger, \underline{a}_1)$. Figure \ref{Fig: Gamma} illustrates the behavior of the function $b \mapsto \Gamma(a, b)$ for values of $a$ near $a^*$.

\begin{figure}[ht]
\centering
\includegraphics[width=0.6\textwidth]{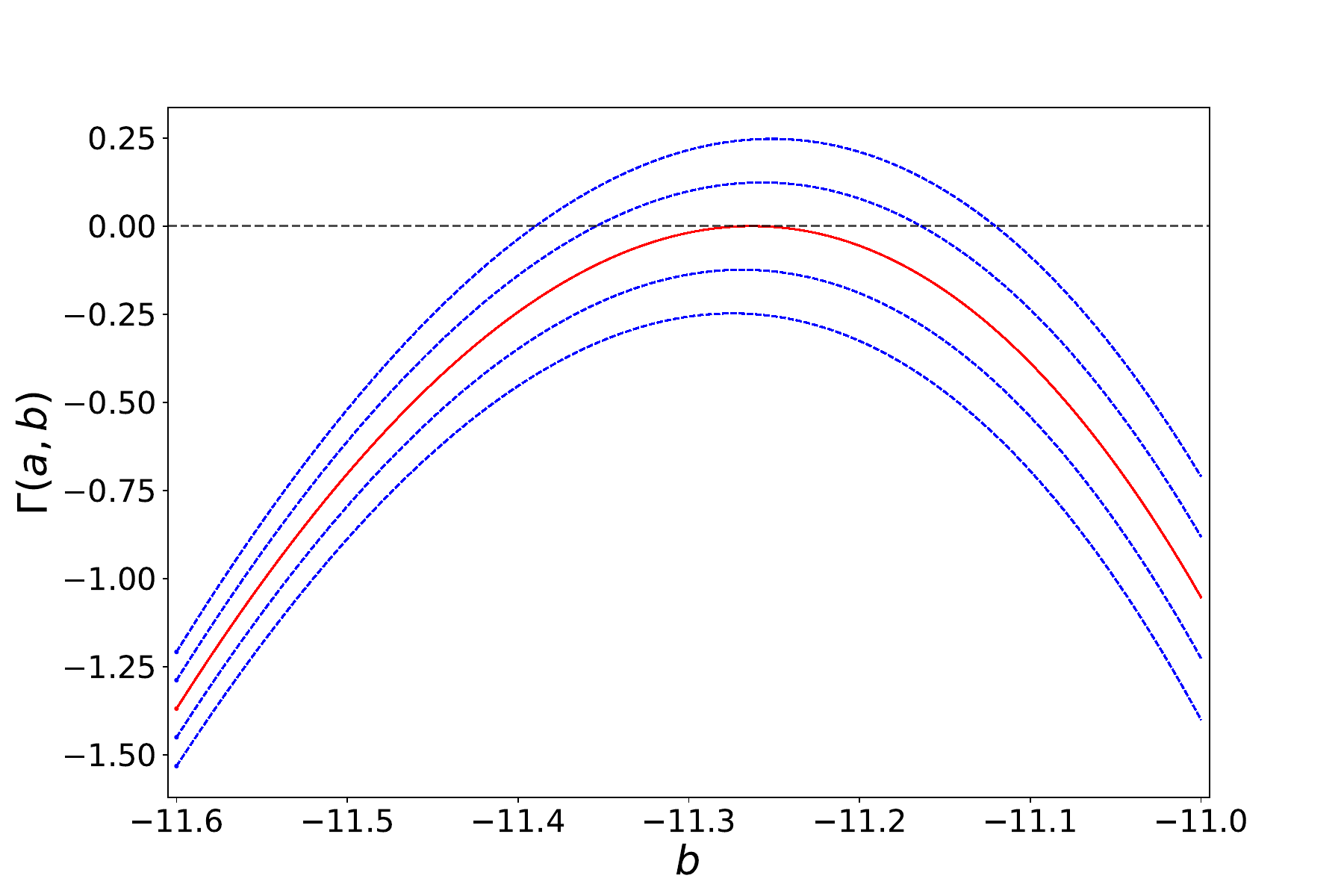}
\caption{\small Plot of $b \mapsto \Gamma(a, b)$ for $a = a^* - 0.010, a^* - 0.005, a^* + 0.005, a^* + 0.010$ (blue dashed) and $b \mapsto \Gamma(a^*, b)$ (red solid).}
\label{Fig: Gamma}
\end{figure}
Using the values $(a^*, b^*)$ identified in Figure \ref{Fig: Gamma}, Figure \ref{Fig: first case} displays the value function $v_{a^*, b^*}$, along with the total cost functions $v_{a, b}$ corresponding to hybrid barrier replenishment policies with $(a, b) \neq (a^*, b^*)$. We can confirm in Figure \ref{Fig: first case} that $v_{a^*, b^*}(x) \leq v_{a, b}(x)$ for the displayed range of $x$. 

Now, setting $\lambda = 12$, we obtain an instance of the case $\underline{a}_2 < \underline{a}_1$. By repeating the above procedure, we identify a pair of candidate barriers $(a^*, b^*)$ for this case. Figure \ref{Fig: second case} illustrates the corresponding value function, along with the total cost functions of other (suboptimal) hybrid barrier replenishment policies.

\begin{figure}[ht]
\centering
\begin{subfigure}[b]{0.49\textwidth}
    \centering
    \includegraphics[width=\textwidth]{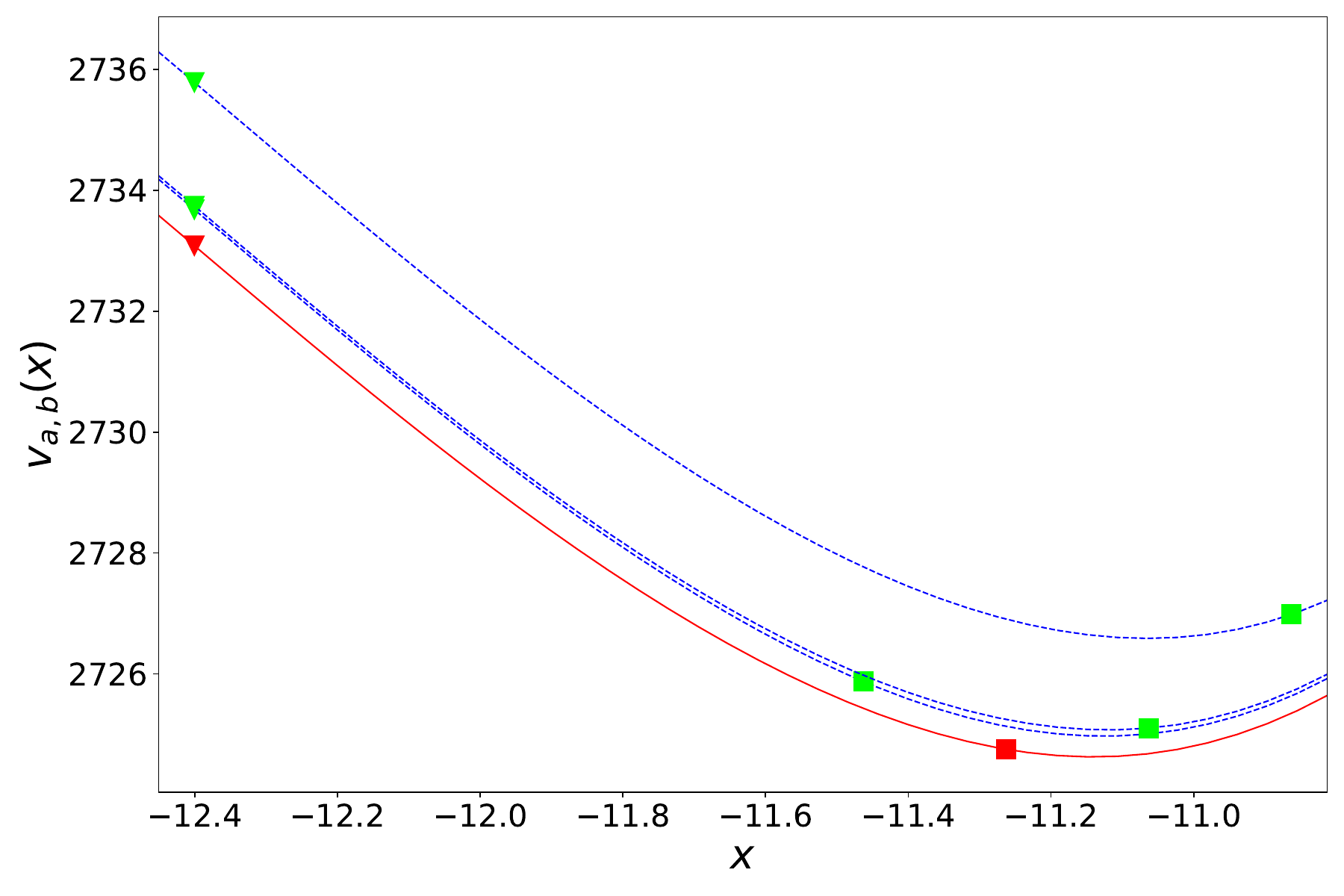}
\end{subfigure}
\vspace{1em}
\begin{subfigure}[b]{0.49\textwidth}
    \centering
    \includegraphics[width=\textwidth]{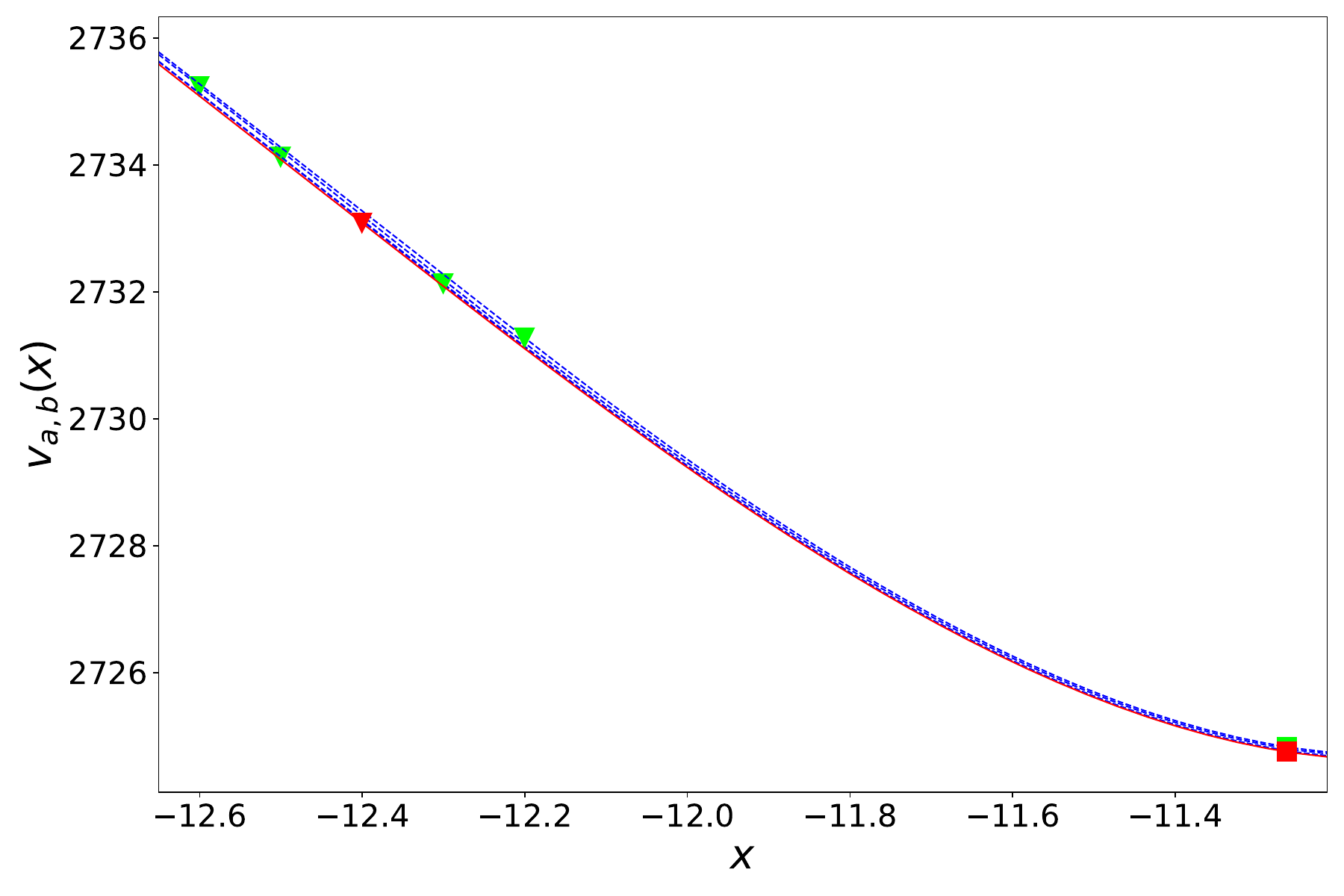}
\end{subfigure}

\caption{\small Left: Plot of $v_{a^*, b}$ (blue dashed) for $b = b^* - 0.2, b^*, b^* + 0.2, b^* + 0.4$, with $(a^*, v_{a^*, b}(a^*))$ (lime triangle), $(a^*, v_{a^*, b^*}(a^*))$ (red triangle), $(b, v_{a^*, b}(b))$ (lime square), and $(b^*, v_{a^*, b^*}(b^*))$ (red square). Right: Plot of $v_{a, b^*}$ (blue dashed) for $a = a^* - 0.2, a^* - 0.1, a^*, a^* + 0.1, a^* + 0.2$, with $(a, v_{a, b^*}(a))$ (lime triangle), $(a^*, v_{a^*, b^*}(a^*))$ (red triangle), $(b^*, v_{a, b^*}(b^*))$ (lime square), and $(b^*, v_{a^*, b^*}(b^*))$ (red square). The optimal total cost function $v_{a^*, b^*}$ is indicated by red curves.}
\label{Fig: first case}
\end{figure}

\begin{figure}[ht]
\centering
\begin{subfigure}[b]{0.49\textwidth}
    \centering
    \includegraphics[width=\textwidth]{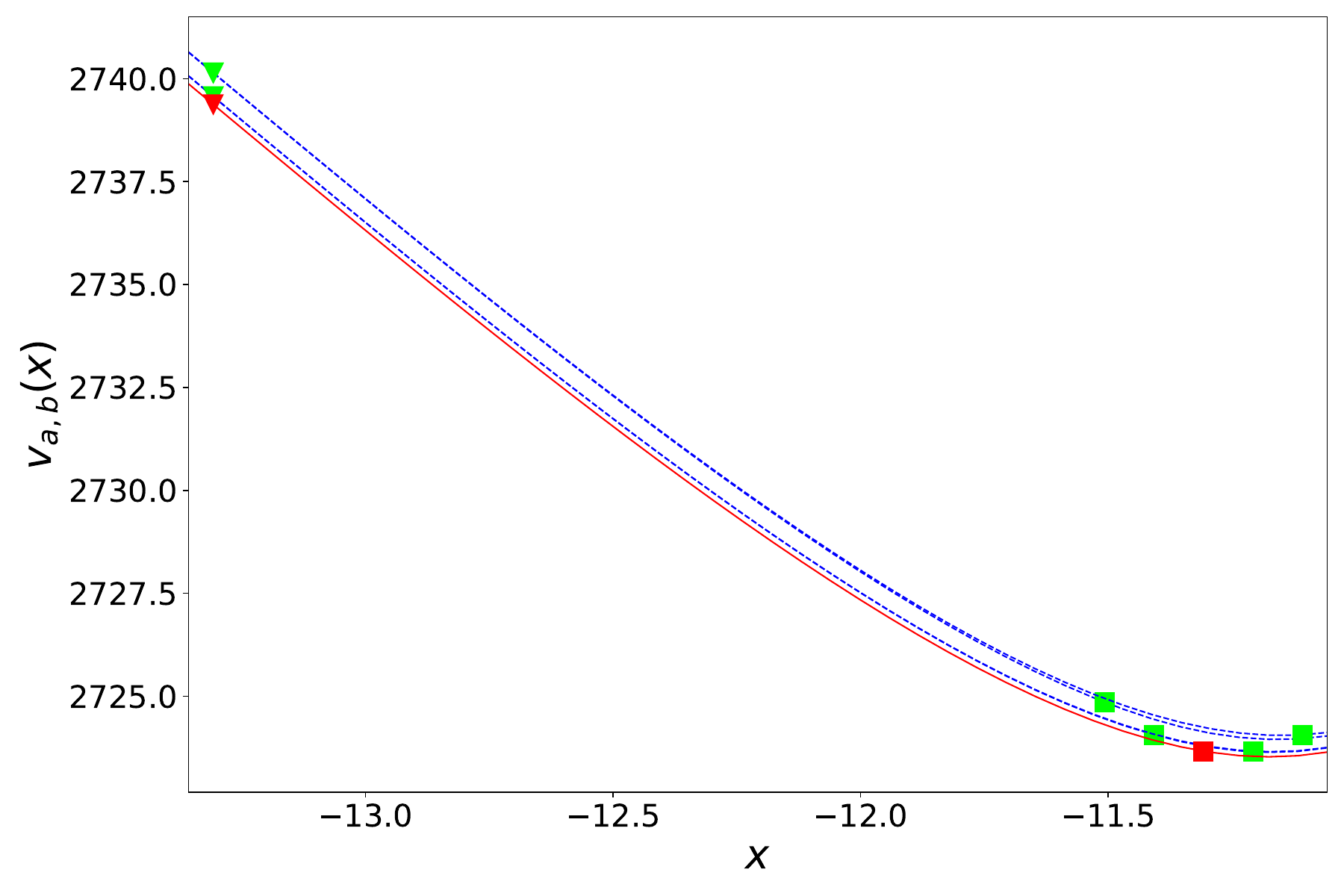}
\end{subfigure}
\vspace{1em}
\begin{subfigure}[b]{0.49\textwidth}
    \centering
    \includegraphics[width=\textwidth]{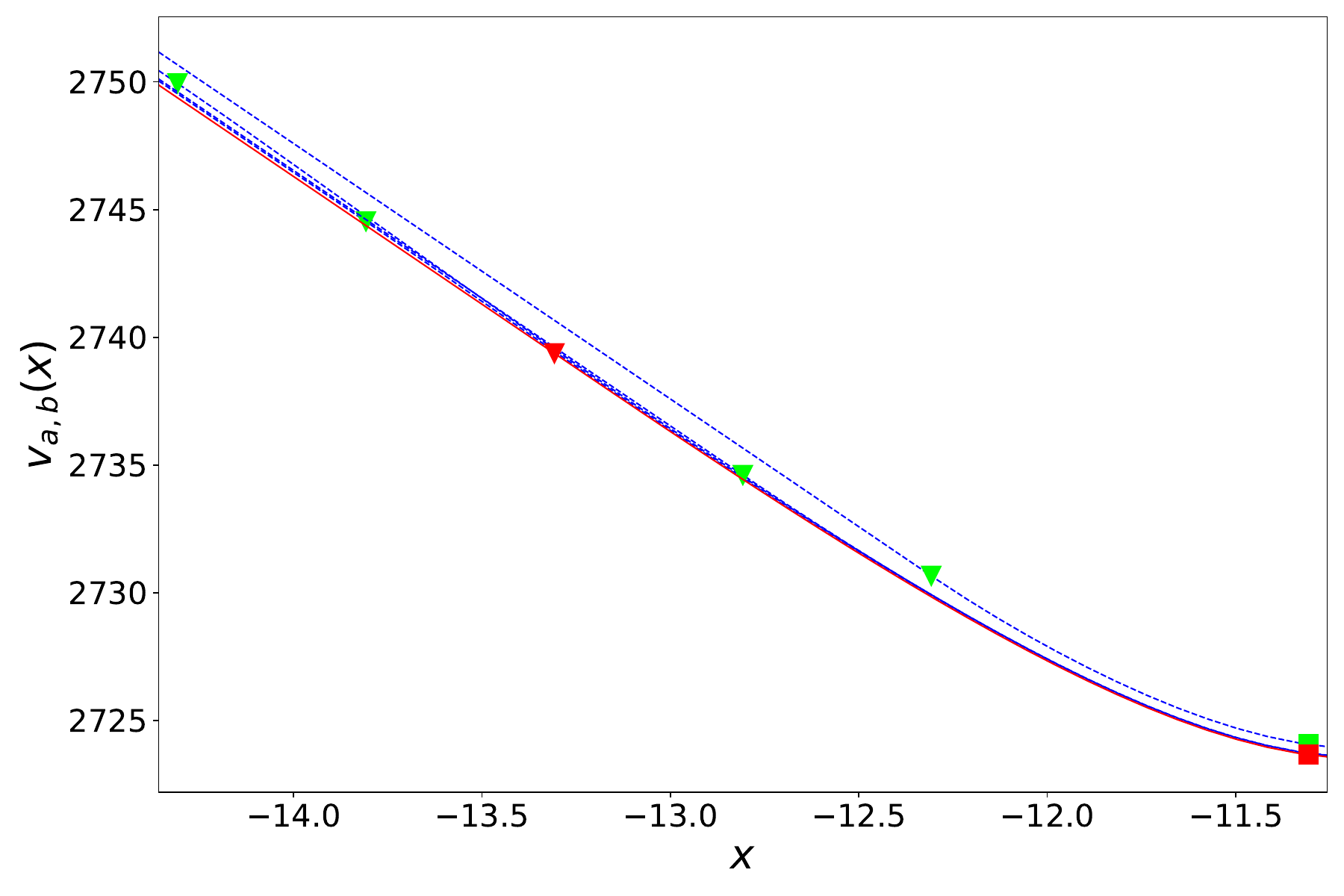}
\end{subfigure}

\caption{\small Left: Plot of $v_{a^*, b}$ for $b = b^* - 0.2, b^* - 0.1, b^*, b^* + 0.1, b^* + 0.2$. Right: Plot of $v_{a, b^*}$ for $a = a^* - 1, a^* - 0.5, a^*, a^* + 0.5, a^* + 1$.}
\label{Fig: second case}
\end{figure}

\subsection{Sensitivity analysis.}
We examine the sensitivity of the optimal solution, including the optimal barrier values identified in Theorem \ref{Thm: existence uniqueness} and the value function, with respect to the unit cost of replenishment. Specifically, for a fixed discounted replenishment cost $K_p$, we examine the behavior of the optimal solution as the regular replenishment cost $K_c$ approaches $K_p$.

First, we recall the model studied in Section 7 of \cite{yamazaki_inventory_2017}, which is a variation of the present model allowing only regular replenishment at a unit cost $C$. In \cite{yamazaki_inventory_2017}, it was established that, if $a^\ddagger \coloneqq \inf \{a\in \mathbb{R}: \int_0^\infty e^{-\Phi(q)y} (f'(y + a) + C) \, \diff y \geq 0\}$ exists, the barrier policy that continuously replenishes the inventory process to the level $a^\ddagger$ is optimal. Set $C = K_p$, for $f: x \mapsto x^2$, we have that $a^\ddagger$ is finite. Denote the corresponding cost function by $v_{a^\ddagger}$, whose semi-explicit form is given in Equation (49) of \cite{yamazaki_inventory_2017}.

As $K_c \to K_p = 2$, the gap between the optimal barriers $(a^*, b^*)$ is expected to shrink, with both barriers converging to $a^\ddagger$. Moreover, the functions $v_{a^*, b^*}$ are expected to converge pointwise to the value function $v_{a^\ddagger}$. These observations are confirmed numerically in Figure \ref{Fig: sensitivity}, which shows that both the functions $v_{a^*, b^*}$ and the optimal barriers $a^*$ and $b^*$ converge monotonically to $v_{a^\ddagger}$ and $a^\ddagger$, respectively.
\begin{figure}[ht]
\centering
    \includegraphics[width=0.6\textwidth]{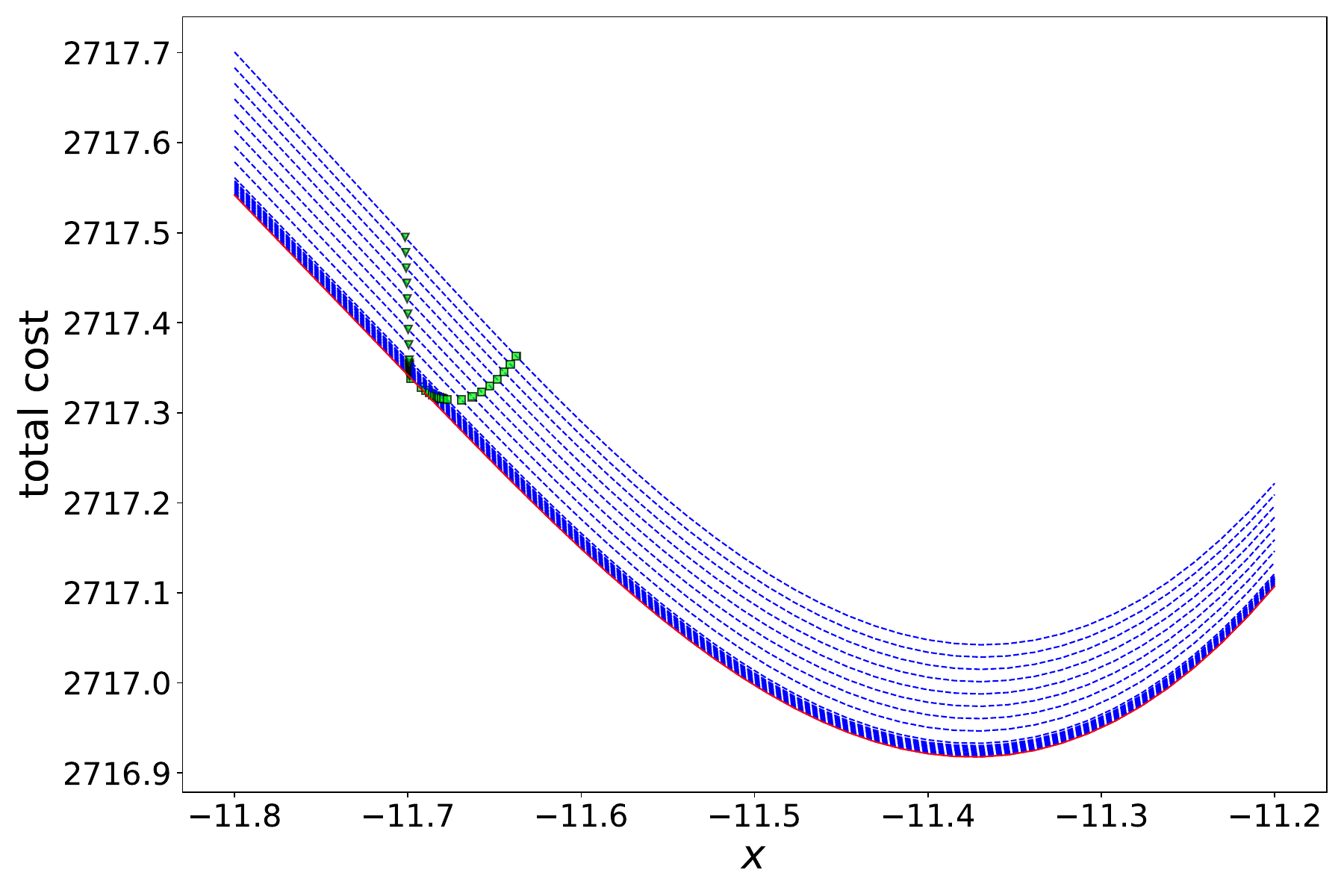}

\caption{\small Value functions $v_{a^*, b^*}$ for $K_c \to K_p$ (blue dashed), with $(a^*, v_{a^*, b^*}(a^*))$ and $(b^*, v_{a^*, b^*}(b^*))$ marked by lime triangles and squares, respectively. The classical value function $v_{a^\ddagger}$ from \cite[Section 7]{yamazaki_inventory_2017} is shown in red.}
\label{Fig: sensitivity}
\end{figure}

\subsection{Cost savings.}
Figure \ref{Fig: savings} displays the value function $v_{a^*, b^*}$, along with the cost functions $v_{a^\ddagger}$ and $v_{b^\ddagger}$, where the latter corresponds to the optimal pure discounted replenishment policy studied in \cite{perez_optimal_2020}. In particular, the left-hand plot corresponds to the case $\lambda = 2$, and the right-hand plot to $\lambda = 0.2$, with all other parameters as specified in Section \ref{Sect: numerical setting}.

We can first confirm in Figure \ref{Fig: savings} that $v_{a^*, b^*}(x) \leq \min(v_{a^\ddagger}(x), v_{b^\ddagger}(x))$ for all $x \in \mathbb{R}$. For the case $\lambda = 2$, a noticeable gap is observed between the function $v_{a^*, b^*}$  and both $v_{a^\ddagger}$ and $v_{b^\ddagger}$, suggesting that an optimal hybrid barrier replenishment policy yields cost savings compared to either a pure regular or pure discounted replenishment policy.

In contrast, for the case $\lambda = 0.2$, a visible gap remains between $v_{a^*, b^*}$ and $v_{b^\ddagger}$, whereas the gap between $v_{a^*, b^*}$ and $v_{a^\ddagger}$ is barely visible. In this case, although the hybrid policy provides substantial cost savings over the pure discounted replenishment policy, its benefit over the pure regular replenishment policy is marginal. 

To explain these observations, recall that $\lambda$ determines the frequency of the Poisson arrival times. When $\lambda$ is small, arrivals are infrequent, making regular replenishment more important for minimizing costs. Hence, an optimal hybrid policy relies more heavily on regular replenishment than on discounted replenishment, resulting in a cost function that closely resembles that of the pure regular replenishment policy. On the other hand, when $\lambda$ is large, Poisson arrivals occur more frequently, reducing the need for regular replenishment and making the advantage of the hybrid policy more evident.

\begin{figure}[ht]
\centering
\begin{subfigure}[b]{0.45\textwidth}
    \centering
    \includegraphics[width=\textwidth]{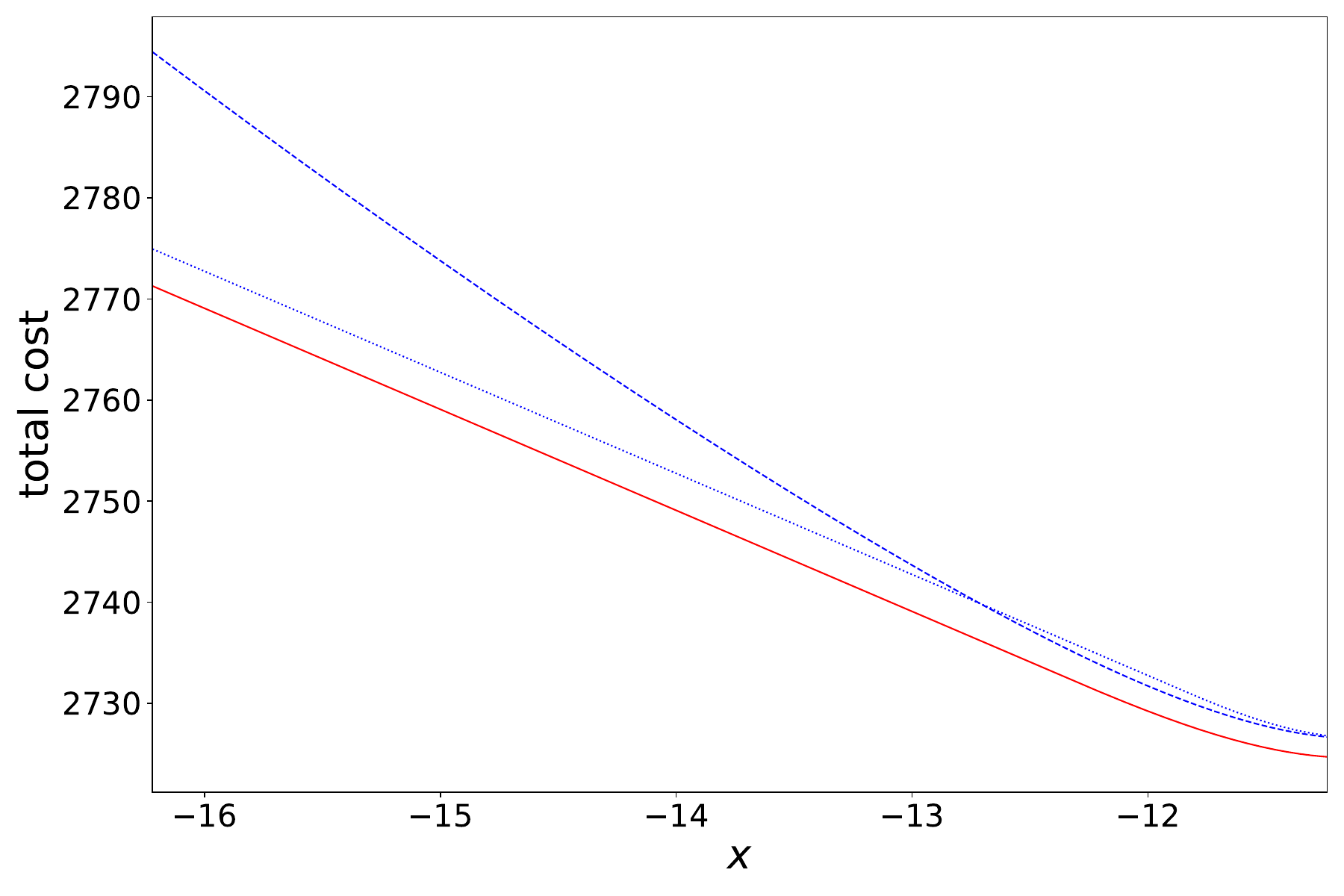}
\end{subfigure}
\vspace{1em}
\begin{subfigure}[b]{0.45\textwidth}
    \centering
    \includegraphics[width=\textwidth]{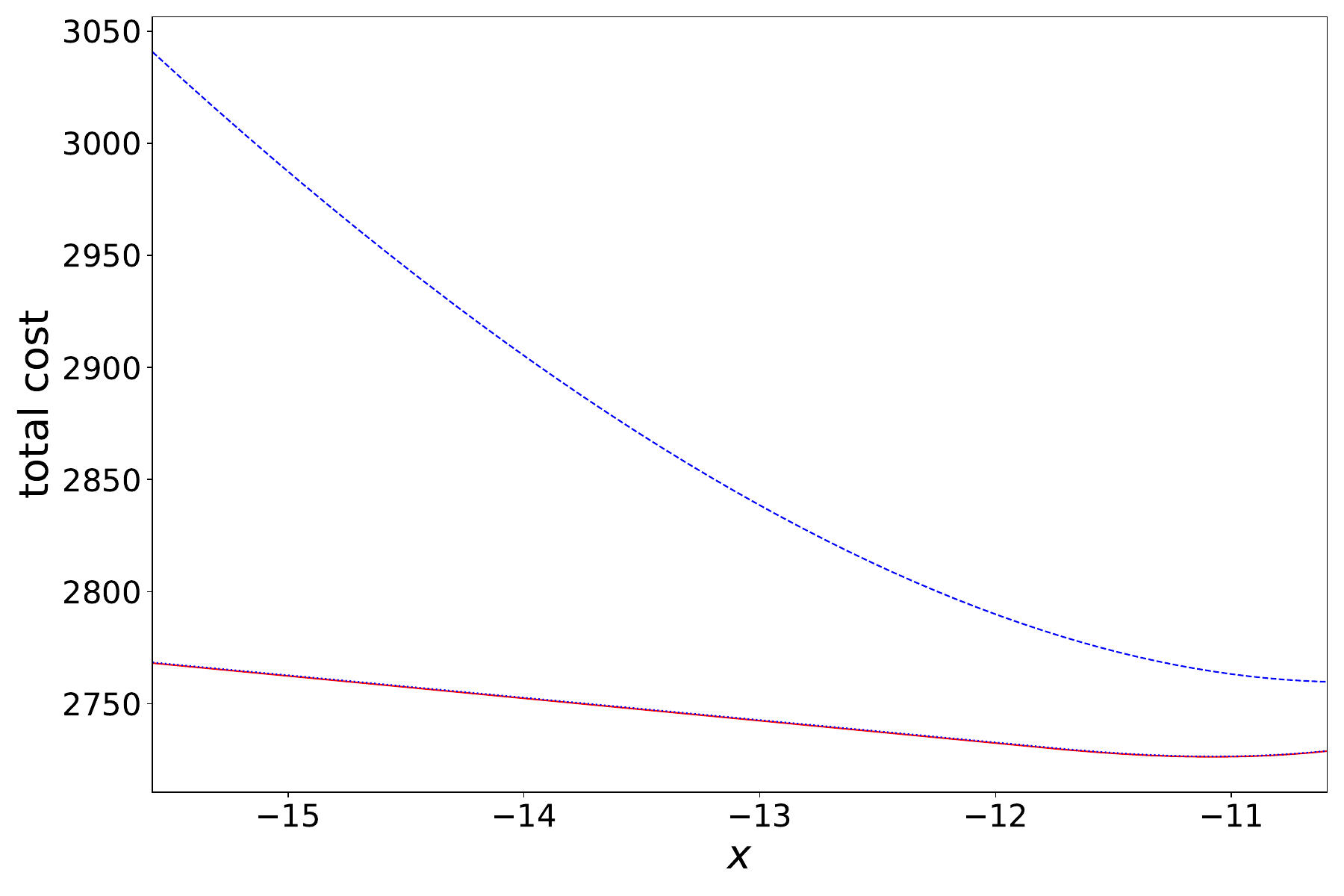}
\end{subfigure}

\caption{\small The functions $v_{a^*, b^*}$, $v_{b^\ddagger}$, and $v_{a^\ddagger}$ are shown as a red solid line, a blue dashed line, and a blue dotted line, respectively. Left: $\lambda = 2$. Right: $\lambda = 0.2$. }
\label{Fig: savings}
\end{figure}
\section*{Acknowledgements}
J. L. P\'erez gratefully acknowledges the support of the Fulbright Program during his sabbatical stay at Arizona State University. He also wishes to thank the faculty and staff of the School of Mathematical and Statistical Sciences at ASU for their hospitality.
\appendix

\section{Proofs}

We first summarize the known results that are used in the proofs in this section. Let $\tau_c^+ \coloneqq \inf\{t \geq 0: X(t) > c\}$ and $\tau_b^- \coloneqq \inf\{t \geq 0: X(t) < b\}$. Fix $b < c$ and $x \leq c$. By  Theorems 2.6(iii) and 2.7(i) in \cite{kuznetsov_theory_2013}, we have
\begin{align}\label{pm}
    \mathbb{E}_x \left[\int_0^{\tau_c^+ \wedge \tau_b^-} e^{-qt} h(X(t))\, \diff t\right]&=  \frac{W^{(q)}(x - b)}{W^{(q)}(c - b)} \rho_b^{(q)}(c; h) -  \rho_b^{(q)}(x; h), \\
    \E_x\left[e^{-q\tau_b^-};\tau_b^-<\tau_c^+\right] &= Z^{(q)}(x-b)-\frac{Z^{(q)}(c-b)}{W^{(q)}(c-b)}W^{(q)}(x-b). 
\end{align}
On the other hand, by Lemma 2.1 in \cite{loeffen_occupation_2014}, we have
\begin{align}
    \E_x\left[e^{-q \tau_b^-} Z^{(q + \lambda)}(X(\tau_b^-) - a); \tau_b^- < \tau_c^+ \right] &= \mathscr{Z}^{(q,\lambda)}_{b}(x, a) - \frac{W^{(q)}(x - b)}{W^{(q)}(c - b)} \mathscr{Z}^{(q,\lambda)}_{b}(c, a), \\
    \E_x\left[e^{-q\tau_b^-} \rho_a^{(q + \lambda)}(X(\tau_b^-); h); \tau_b^- < \tau_c^+\right] &= \int_a^b h(y) \left(\mathscr{W}^{(q,\lambda)}_{b}(x, y) - \frac{W^{(q)}(x - b)}{W^{(q)}(c - b)} \mathscr{W}^{(q,\lambda)}_{b}(c, y)\right)\, \diff y. \label{8}
\end{align}

Denote by $U^{(a)} = (U^{(a)}(t); t \geq 0)$ the spectrally negative L\'evy process with classical reflection from below at $a$ and $\kappa_c^+ \coloneqq \inf\{t \geq 0: U^{(a)}(t) > c\}$. By Theorem 2.8(i), (iii) in \cite{kuznetsov_theory_2013}, for $x \leq c$, we have
\begin{equation}\label{tsep_r}
    \E_x\left[e^{-q \kappa_c^+}\right] = \frac{Z^{(q)}(x - a)}{Z^{(q)}(c - a)}, \quad
    \E_x\left[\int_0^{\kappa_c^+} e^{-qt} h(U^{(a)}(t))\, \diff t\right] = 
    \frac{Z^{(q)}(x - a)}{Z^{(q)}(c - a)} \rho_a^{(q)}(c; h) - \rho_a^{(q)}(x; h).
\end{equation}

By identity (6) in \cite{loeffen_occupation_2014}, for $b, c \in \mathbb{R}$, we have 
\begin{align} \label{LRZ}
\begin{split}
    \lambda\int_b^c W^{(q + \lambda)}(c - u) W^{(q)}(u - b)\, \diff u &= W^{(q + \lambda)}(c - b) - W^{(q)}(c - b),\\
    \lambda\int_b^c W^{(q + \lambda)}(c - u) Z^{(q)}(u - b)\, \diff u &= Z^{(q + \lambda)}(c - b) - Z^{(q)}(c - b).
\end{split}
\end{align}
Similarly, one can verify that $\lambda\int_b^c W^{(q + \lambda)}(c - u) \overline{Z}^{(q)}(u - b)\, \diff u = \overline{Z}^{(q + \lambda)}(c - b) - \overline{Z}^{(q)}(c - b)$ by taking Laplace transforms of both sides. For $h: \mathbb{R} \to \mathbb{R}$ and $b < c$, Fubini's theorem together with \eqref{LRZ} yields
\begin{equation}\label{Eq: g(x; c) misc 1}
    \lambda \int_b^c W^{(q + \lambda)}(c - y) \int_b^y h(z) W^{(q)}(y - z)\, \diff z\, \diff y = \int_b^c h(z) \left(W^{(q + \lambda)}(c - z) - W^{(q)}(c - z) \right)\, \diff z.
\end{equation}
Moreover, for $a, b, c \in \mathbb{R}$, 
\begin{align}
\begin{split}\label{Eq: scale identity}
    \mathscr{W}^{(q,\lambda)}_{b}(c, a) &= W^{(q)}(c - a) + \lambda\int_{a}^{b} W^{(q)}(c - u)W^{(q + \lambda)}(u - a)\, \diff u,\\ 
    \overline{\mathscr{W}}^{(q,\lambda)}_{b}(c, a) &=  \overline{W}^{(q)}(c - a) + \lambda\int_{a}^{b} W^{(q)}(c - u)\overline{W}^{(q + \lambda)}(u - a)\, \diff u, \\ 
    \mathscr{Z}^{(q,\lambda)}_{b}(c, a) &= Z^{(q)}(c - a) + \lambda\int_{a}^{b} W^{(q)}(c - u)Z^{(q + \lambda)}(u - a)\, \diff u, \\
    \overline{\mathscr{Z}}^{(q,\lambda)}_{b}(c, a) &= \overline{Z}^{(q)}(c - a) + \lambda\int_{a}^{b} W^{(q)}(c - u) \overline{Z}^{(q + \lambda)}(u - a)\, \diff u.
\end{split}
\end{align}

The following identities can be derived from those listed above. 
\begin{lemma} For $a < b < c$,
\begin{align}\label{Eq: P1 misc 1}
    \int_b^c W^{(q + \lambda)}(c - y) \mathscr{Z}^{(q,\lambda)}_{b}(y, a) \, \diff y 
    &= \frac{Z^{(q+\lambda)}(c - a) - \mathscr{Z}^{(q,\lambda)}_{b}(c, a)} \lambda, \\
\label{Eq: P1 misc 3}
    \int_b^c W^{(q + \lambda)}(c - y) \int_a^b h(z) \mathscr{W}^{(q,\lambda)}_{b}(y, z)\, \diff z\, \diff y &= \int_a^b h(z) \int_b^c W^{(q)}(c - y) W^{(q + \lambda)}(y - z)\, \diff y\, \diff z.
\end{align}
\end{lemma}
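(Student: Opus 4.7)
The plan is to derive both identities by first expanding the ``script'' scale functions through their defining operator $\mathcal{A}^{(q,\lambda,b,y)}$, then applying Fubini's theorem and an appropriate version of the convolution identity \eqref{LRZ}. The key workhorse is the following reformulation of \eqref{LRZ}, obtained by relabeling: for any $u < c$,
\[
\lambda \int_u^c W^{(q+\lambda)}(c-y)\, W^{(q)}(y-u) \, \diff y = W^{(q+\lambda)}(c-u) - W^{(q)}(c-u).
\]
This is the only analytic fact I need beyond the definitions; both identities follow by algebraic manipulation.

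For \eqref{Eq: P1 misc 1}, I would begin by writing, for $y \geq b$,
\[
\mathscr{Z}^{(q,\lambda)}_{b}(y,a) = Z^{(q+\lambda)}(y-a) - \lambda\int_b^y W^{(q)}(y-z) Z^{(q+\lambda)}(z-a)\,\diff z
\]
from the definition \eqref{Eq: scale scr} of $\mathcal{A}^{(q,\lambda,b,a)}$. Substituting into the left-hand side of \eqref{Eq: P1 misc 1} and swapping the order of integration in the double-integral piece (so that $z$ runs over $[b,c]$ and $y$ over $[z,c]$), the inner integral in $y$ becomes $\lambda\int_z^c W^{(q+\lambda)}(c-y) W^{(q)}(y-z)\,\diff y = W^{(q+\lambda)}(c-z) - W^{(q)}(c-z)$ by the displayed identity above. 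The $W^{(q+\lambda)}(c-z) Z^{(q+\lambda)}(z-a)$ contribution then cancels the first piece $\int_b^c W^{(q+\lambda)}(c-y) Z^{(q+\lambda)}(y-a)\,\diff y$ exactly, leaving $\int_b^c W^{(q)}(c-z) Z^{(q+\lambda)}(z-a)\,\diff z$. Comparing with the definition of $\mathscr{Z}^{(q,\lambda)}_{b}(c,a)$ evaluated at $y=c$, this last expression equals $[Z^{(q+\lambda)}(c-a) - \mathscr{Z}^{(q,\lambda)}_{b}(c,a)]/\lambda$, which is the claimed identity.

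For \eqref{Eq: P1 misc 3}, the same strategy applies. I expand $\mathscr{W}^{(q,\lambda)}_{b}(y,z) = W^{(q+\lambda)}(y-z) - \lambda\int_b^y W^{(q)}(y-u) W^{(q+\lambda)}(u-z)\,\diff u$, swap the $h(z)$ integral outside (using Fubini), and then interchange the order of the remaining $(y,u)$-integration. The inner $y$-integral becomes $\lambda\int_u^c W^{(q+\lambda)}(c-y) W^{(q)}(y-u)\,\diff y = W^{(q+\lambda)}(c-u) - W^{(q)}(c-u)$, so the double-integral piece equals $\int_b^c W^{(q+\lambda)}(u-z)[W^{(q+\lambda)}(c-u) - W^{(q)}(c-u)]\,\diff u$; subtracting this from $\int_b^c W^{(q+\lambda)}(c-y) W^{(q+\lambda)}(y-z)\,\diff y$ produces the exact cancellation, leaving $\int_b^c W^{(q)}(c-y) W^{(q+\lambda)}(y-z)\,\diff y$, as required. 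I foresee no real obstacle: both identities are bookkeeping exercises anchored on a single convolution identity. The only care needed is verifying that the integration limits after Fubini are compatible with the form of \eqref{LRZ} (i.e., that the ``inner'' variable actually starts from the lower argument of $W^{(q)}$), which is exactly why the second argument $z$ (respectively $u$) is pushed to the outer integral.
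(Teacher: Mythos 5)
Your proposal is correct and follows essentially the same route as the paper: expand the script functions via the operator $\mathcal{A}^{(q,\lambda,b,\cdot)}$, apply Fubini, and invoke the convolution identity \eqref{LRZ} (the paper packages this step as \eqref{Eq: g(x; c) misc 1}, which you simply re-derive inline), after which the same cancellations yield both identities.
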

\begin{proof}
(i) By \eqref{Eq: g(x; c) misc 1} with $h(z) = Z^{(q + \lambda)}(z - a)$, the left-hand side of \eqref{Eq: P1 misc 1} equals
\begin{multline*}
    \int_b^c W^{(q + \lambda)}(c - y) Z^{(q + \lambda)}(y - a) \diff y -\lambda \int_b^c W^{(q + \lambda)}(c - y)  \int_{b}^{y} W^{(q)}(y - z) Z^{(q + \lambda)}(z - a)\, \diff z \diff y \\
    = \int_b^c Z^{(q + \lambda)}(y - a) W^{(q)}(c - y)\, \diff y = \frac {Z^{(q+\lambda)}(c-a) - \mathscr{Z}^{(q,\lambda)}_{b}(c, a) } \lambda.
\end{multline*}
(ii) From \eqref{Eq: scale scr}, the left-hand side of \eqref{Eq: P1 misc 3} equals
\begin{equation}\label{Eq: lemma B1 misc 1}
     \int_b^c W^{(q + \lambda)}(c - y) \int_a^b h(z) W^{(q + \lambda)}(y - z)\, \diff z\, \diff y - H,
\end{equation}
where, by Fubini's theorem and \eqref{LRZ}, 
\begin{align*}
    H &\coloneqq \int_a^b h(z) \left( \int_b^c W^{(q + \lambda)}(u - z) \left(W^{(q + \lambda)}(c - u) - W^{(q)}(c - u) \right) \, \diff u \right)\, \diff z.
\end{align*}
Upon substituting $H$ into \eqref{Eq: lemma B1 misc 1}, several terms cancel, leaving exactly the right-hand side of \eqref{Eq: P1 misc 3}.

\end{proof}

\begin{lemma}\label{Lemma: B2} For $a < b < c$,
\begin{align}\label{Eq: control misc 1}
    \lim_{c \to \infty} \frac{\mathcal{K}_{b}^{(q, \lambda)}(c, a)}{Z^{(q)}(c - a)} &= \frac{\Theta^{(q + \lambda)}(b - a, \Phi(q))}{Z^{(q + \lambda)}(b - a)}. 
\end{align}
\end{lemma}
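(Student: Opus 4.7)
The plan is to start from the definition
\[
\mathcal{K}_{b}^{(q,\lambda)}(c,a)=\frac{1}{\lambda+q}\Bigl(\frac{q\,\mathscr{Z}^{(q,\lambda)}_{b}(c,a)}{Z^{(q+\lambda)}(b-a)}+\lambda\, Z^{(q)}(c-b)\Bigr)
\]
as given in \eqref{Eq: script K}, divide through by $Z^{(q)}(c-a)$, and take $c\to\infty$ in each piece separately. The two limits needed are
\[
\lim_{c\to\infty}\frac{Z^{(q)}(c-b)}{Z^{(q)}(c-a)}\qquad\text{and}\qquad\lim_{c\to\infty}\frac{\mathscr{Z}^{(q,\lambda)}_{b}(c,a)}{Z^{(q)}(c-a)}.
\]
For the first, since $Z^{(q)}(y)\sim(q/\Phi(q))W^{(q)}(y)$ by the second limit in \eqref{Eq: W, Z limits}, the ratio reduces to a ratio of scale functions and, by the first limit in \eqref{Eq: W, Z limits}, converges to $e^{-\Phi(q)(b-a)}$.

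For the second limit, I would use the identity in \eqref{Eq: scale identity},
\[
\mathscr{Z}^{(q,\lambda)}_{b}(c,a)=Z^{(q)}(c-a)+\lambda\int_{a}^{b}W^{(q)}(c-u)\,Z^{(q+\lambda)}(u-a)\,\diff u,
\]
so that dividing by $Z^{(q)}(c-a)$ gives $1$ plus an integral over the fixed, bounded interval $[a,b]$. On this compact set, $W^{(q)}(c-u)/Z^{(q)}(c-a)$ converges pointwise to $(\Phi(q)/q)e^{-\Phi(q)(u-a)}$ (again combining the two limits in \eqref{Eq: W, Z limits}), and the family is dominated by an integrable function for $c$ large (this is the only analytic subtlety, and it follows from the monotonicity of $W^{(q)}$ and the bound $W^{(q)}(c-u)\le W^{(q)}(c-a)$ for $u\in[a,b]$). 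Dominated convergence then yields
\[
\lim_{c\to\infty}\frac{\mathscr{Z}^{(q,\lambda)}_{b}(c,a)}{Z^{(q)}(c-a)}=1+\frac{\Phi(q)\lambda}{q}\int_{0}^{b-a}e^{-\Phi(q)v}Z^{(q+\lambda)}(v)\,\diff v,
\]
after the change of variable $v=u-a$.

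The last step, which I anticipate as the main computational obstacle, is to reconcile this expression with the claimed limit $\Theta^{(q+\lambda)}(b-a,\Phi(q))/Z^{(q+\lambda)}(b-a)$. Assembling the pieces, the target identity becomes
\[
q+\Phi(q)\lambda\int_{0}^{b-a}e^{-\Phi(q)v}Z^{(q+\lambda)}(v)\,\diff v+\lambda e^{-\Phi(q)(b-a)}Z^{(q+\lambda)}(b-a)=(\lambda+q)\,\Theta^{(q+\lambda)}(b-a,\Phi(q)).
\]
I would verify this by integration by parts on $\int_{0}^{b-a}e^{-\Phi(q)v}Z^{(q+\lambda)}(v)\,\diff v$, using $(Z^{(q+\lambda)})'=(q+\lambda)W^{(q+\lambda)}$: the boundary terms produce $1$ and $-e^{-\Phi(q)(b-a)}Z^{(q+\lambda)}(b-a)/\Phi(q)$, while the remaining integral reproduces $\lambda\int_{0}^{b-a}e^{-\Phi(q)y}W^{(q+\lambda)}(y)\,\diff y$, which is exactly $\Theta^{(q+\lambda)}(b-a,\Phi(q))-1$ by its defining formula. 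After collecting terms the $\lambda e^{-\Phi(q)(b-a)}Z^{(q+\lambda)}(b-a)$ contributions cancel and the identity falls out, completing the proof.
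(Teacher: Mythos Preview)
Your proposal is correct and follows essentially the same route as the paper: expand $\mathscr{Z}^{(q,\lambda)}_{b}(c,a)$ via the identity in \eqref{Eq: scale identity}, pass to the limit using the asymptotics in \eqref{Eq: W, Z limits} (with dominated convergence on the bounded interval $[a,b]$), and then reduce the resulting expression to $\Theta^{(q+\lambda)}(b-a,\Phi(q))/Z^{(q+\lambda)}(b-a)$ by a single integration by parts using $(Z^{(q+\lambda)})'=(q+\lambda)W^{(q+\lambda)}$. The paper's proof is terser but identical in structure; your write-up merely makes the integration-by-parts bookkeeping explicit.
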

\begin{proof}
Identity \eqref{Eq: scale identity} together with \eqref{Eq: W, Z limits} yields
\begin{align*}
    &~~ \frac{\mathcal{K}_{b}^{(q, \lambda)}(c, a)}{Z^{(q)}(c - a)} = \frac{q}{\lambda + q} \frac{1}{Z^{(q + \lambda)}(b - a)} \left(\frac{Z^{(q)}(c - a)}{Z^{(q)}(c - a)} + \lambda\int_{a}^{b}\frac{W^{(q)}(c - y)}{Z^{(q)}(c - a)} Z^{(q + \lambda)}(y - a)\, \diff y\right) + \frac{\lambda}{\lambda + q} \frac{Z^{(q)}(c - b)}{Z^{(q)}(c - a)}\\
    &\xrightarrow{c \to \infty} \frac{q}{\lambda + q} \frac{1}{Z^{(q + \lambda)}(b - a)} \left(1 + \lambda \frac{\Phi(q)}{q}\int_{0}^{b - a} e^{-\Phi(q)y} Z^{(q + \lambda)}(y)\, \diff y\right) + \frac{\lambda}{\lambda + q} e^{-\Phi(q)(b - a)}.
\end{align*}
Simplification with integration by parts yields \eqref{Eq: control misc 1}.
\end{proof}

\subsection{Proof of Lemma \ref{Lemma: inventory cost}}\label{Appendix: proof of inventory cost}
For $c \in \mathbb{R}$, define
\begin{equation}
    \eta_c \coloneqq \inf \{t > 0: Y^{a, b}(t) > c\}. \label{def_eta}
\end{equation}
We first obtain the following result:
\begin{proposition} \label{prop_resolvent_capped}
Suppose $h$ is a positive, bounded, measurable function on $\mathbb{R}$ with compact support. For $a < b < c$,
\begin{align}\label{9.1}
\begin{split}
    g(x; c) &\coloneqq  \E_x\left[\int_0^{\eta_c} e^{-qt} h(Y^{a,b}(t))\, \diff t\right] =
   -  \mathcal{H}^{(q, \lambda)}_{b}(x, a; h) +  \frac {\mathcal{H}^{(q, \lambda)}_{b}(c, a; h)} {\mathcal{K}_{b}^{(q, \lambda)}(c, a)} \mathcal{K}_{b}^{(q, \lambda)}(x, a), \quad x \leq c.
\end{split}
\end{align}
\end{proposition}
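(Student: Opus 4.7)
The plan is to invoke the strong Markov property of $Y^{a,b}$ together with the Poissonization trick for exponential times, decomposing the expectation according to whether the starting point $x$ lies above or below the upper barrier $b$. This is natural because the dynamics of $Y^{a,b}$ are qualitatively different in the two regimes: above $b$ the process coincides with the uncontrolled $X$ (neither reflection nor Poissonian intervention is active), while on $[a,b]$ it coincides with the classically reflected process $U^{(a)}$ between Poisson arrivals.

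For $x\in[b,c]$, I observe that while $Y^{a,b}$ remains in $[b,c]$ the process equals $X$, and at $\tau_b^-\wedge\tau_c^+$ either $\eta_c=\tau_c^+$ (so the epoch ends) or $Y^{a,b}(\tau_b^-)=X(\tau_b^-)\vee a\in[a,b)$, because any jump undershoot below $a$ is absorbed by continuous reflection. Strong Markov at $\tau_b^-\wedge\tau_c^+$ yields
\begin{equation*}
g(x;c)\;=\;\mathbb{E}_x\!\left[\int_0^{\tau_b^-\wedge\tau_c^+}\!e^{-qt}h(X(t))\,\diff t\right]+\mathbb{E}_x\!\left[e^{-q\tau_b^-}g(X(\tau_b^-)\vee a;c);\,\tau_b^-<\tau_c^+\right],
\end{equation*}
where the first expectation is explicit via \eqref{pm}, and the second can be unfolded through \eqref{8} once $g(\,\cdot\,;c)$ is known on $[a,b]$.

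For $x\in[a,b]$, I instead condition on the minimum of the first Poisson arrival $T(1)$ and $\kappa_c^+$, the first exit time of $U^{(a)}$ above $c$; until this time $Y^{a,b}$ coincides with $U^{(a)}$, and at time $T(1)$ on $\{T(1)<\kappa_c^+\}$ the controlled process is raised from $U^{(a)}(T(1))$ to $U^{(a)}(T(1))\vee b$. Using the memoryless property of $T(1)$ together with the independence-and-rate identity
\begin{equation*}
\mathbb{E}_x\!\left[e^{-qT(1)}F(U^{(a)}(T(1)));\,T(1)<\kappa_c^+\right]=\lambda\,\mathbb{E}_x\!\left[\int_0^{\kappa_c^+}\!e^{-(q+\lambda)t}F(U^{(a)}(t))\,\diff t\right],
\end{equation*}
I obtain the single clean representation
\begin{equation*}
g(x;c)\;=\;\mathbb{E}_x\!\left[\int_0^{\kappa_c^+}\!e^{-(q+\lambda)t}\bigl(h(U^{(a)}(t))+\lambda\,g(U^{(a)}(t)\vee b;c)\bigr)\,\diff t\right],
\end{equation*}
and \eqref{tsep_r} with discount $q+\lambda$ then expresses this in terms of $\rho_a^{(q+\lambda)}$ and $Z^{(q+\lambda)}(\,\cdot-a)$. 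Coupling this with the $[b,c]$ formula gives a closed linear system for $g(\,\cdot\,;c)$ on $[a,c]$.

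Solving this system and collecting terms produces a linear combination of $\mathcal{H}_b^{(q,\lambda)}(\,\cdot\,,a;h)$ and $\mathcal{K}_b^{(q,\lambda)}(\,\cdot\,,a)$; the inhomogeneous part of the integral equations contributes the $-\mathcal{H}_b^{(q,\lambda)}$ term, while the coefficient in front of $\mathcal{K}_b^{(q,\lambda)}$ is pinned down by the boundary condition $g(c;c)=0$, which the claimed formula verifies trivially by setting $x=c$. The main obstacle is the bookkeeping in this final consolidation step: once the resolvent formulas are plugged in, nested convolutions of $W^{(q)}$, $W^{(q+\lambda)}$, $Z^{(q+\lambda)}$, and the hybrid kernels $\mathscr{W}_b^{(q,\lambda)},\mathscr{Z}_b^{(q,\lambda)}$ appear, and folding them into the compact objects $\mathcal{H}_b^{(q,\lambda)}$ and $\mathcal{K}_b^{(q,\lambda)}$ requires careful application of the identities \eqref{LRZ}, \eqref{Eq: P1 misc 1}, \eqref{Eq: P1 misc 3}, and \eqref{Eq: scale identity} collected in the appendix.
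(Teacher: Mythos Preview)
Your proposal is correct and uses the same ingredients as the paper—strong Markov, Poissonization, and the two-sided exit/resolvent identities for $X$ and $U^{(a)}$—but organizes them differently in one respect. For $x\in[a,b]$ the paper does \emph{not} stop at $\kappa_c^+\wedge T(1)$; instead it first stops at $\kappa_b^+\wedge T(1)$, exploiting the crucial observation that $Y^{a,b}(\kappa_b^+\wedge T(1))=b$ deterministically (either $U^{(a)}$ creeps to $b$, or the Poisson replenishment raises it to $b$). This collapses the formula for $x\le b$ to a single unknown scalar $g(b;c)$ (equation \eqref{4}), after which the $[b,c]$ formula also depends only on $g(b;c)$. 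Only then does the paper run your $\kappa_c^+\wedge T(1)$ argument, and only from the single starting point $x=b$, to obtain a scalar linear equation $\mathcal{G}^h(b;c)=A+B\,\mathcal{G}^h(b;c)$ that it solves directly.

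Your route—going straight to $\kappa_c^+$ for all $x\in[a,b]$ and then fixing the free constant via $g(c;c)=0$—is a legitimate alternative, and the boundary-condition shortcut is a nice way to avoid the explicit computation of the integral $\lambda\int_b^c g(y;c)W^{(q+\lambda)}(c-y)\,\diff y$. The paper's extra $\kappa_b^+$ step buys a cleaner reduction to one scalar unknown and makes the algebra more linear; your approach buys a slightly more symmetric integral-equation formulation and a conceptually transparent way to pin down the constant. Both lead to the same consolidation step using \eqref{LRZ}, \eqref{Eq: scale identity}, \eqref{Eq: P1 misc 1}, and \eqref{Eq: P1 misc 3}.
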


\begin{proof}
(i):  Fix $x \leq b$. Since $Y^{a,b} = U^{(a)}$ on $[0, \kappa_b^+ \wedge T(1)]$ and $Y^{a,b} (\kappa_b^+ \wedge T(1)) = U^{(a)} (\kappa_b^+ \wedge T(1)) = b$ (where $T(1)$ is the first discounted replenishment opportunity), we can apply the strong Markov property to obtain
\begin{align}\label{0}
    g(x; c) = \E_x\left[\int_0^{\kappa_b^+ \wedge T(1)} e^{-qt} h(U^{(a)}(t)) \, \diff t\right] + \E_x \left[e^{-q(\kappa_b^+ \wedge T(1))}\right] g(b; c).
\end{align}
By first conditioning on $T(1)$, and then using \eqref{tsep_r}, we obtain 
\begin{equation*}
    \E_x\left[\int_0^{\kappa_b^+ \wedge T(1)} e^{-qt} h(U^{(a)}(t)) \, \diff t\right] = \E_x\left[\int_0^{\kappa_b^+} e^{-(q + \lambda)t} h(U^{(a)}(t))\, \diff t\right]
    = \frac{Z^{(q + \lambda)}(x - a)}{Z^{(q + \lambda)}(b - a)}\rho_a^{(q + \lambda)}(b; h) -\rho_a^{(q + \lambda)}(x; h),
\end{equation*}
\begin{equation*}
    \E_x \left[e^{-q(\kappa_b^+ \wedge T(1))}\right] = \mathbb{E}_x \left[e^{-qT(1)}; T(1) < \kappa_b^+\right] + \mathbb{E}_x \left[e^{-q\kappa_b^+}; \kappa_b^+< T(1)\right] \nonumber = \frac{\lambda}{\lambda + q}+\frac{q}{\lambda + q}\frac{Z^{(q + \lambda)}(x-a)}{Z^{(q + \lambda)}(b - a)}.
\end{equation*}
Substituting these identities into \eqref{0},
\begin{equation}\label{4}
    g(x; c) = \frac{Z^{(q + \lambda)}(x - a)}{Z^{(q + \lambda)}(b - a)} \frac q {\lambda + q} \mathcal{G}^h(b;c) + \frac{\lambda}{q + \lambda} g(b; c) - \rho_a^{(q + \lambda)}(x; h),
\end{equation}
where
\begin{equation}
    \mathcal{G}^h(b;c) \coloneqq \frac{\lambda + q} q \rho_a^{(q + \lambda)}(b; h) +  g(b; c). \label{G_expression_recursive}
\end{equation}

Now, fix $b < x \leq c$. Using the strong Markov property in conjunction with \eqref{4}, we obtain
\begin{align}
\begin{split}
    g(x; c) &= \E_x\left[\int_0^{\tau_b^-\wedge\tau_c^+} h(X(t))\, \diff t\right] + \E_x\left[e^{-q\tau_b^-} g(X(\tau_b^-); c); \tau_b^- < \tau_c^+\right] \\
    &= g_1(x;c) + \frac{\lambda}{\lambda + q} g(b; c) g_2(x; c) + \frac{q}{\lambda + q} \frac{\mathcal{G}^h(b;c)} {Z^{(q + \lambda)}(b - a)} g_3(x;c) - g_4(x;c),
    \end{split} \label{9}
\end{align}
where, using \eqref{pm}--\eqref{8},
\begin{align*}
    g_1(x; c) &\coloneqq \frac{W^{(q)}(x - b)}{W^{(q)}(c - b)} \rho_b^{(q)}(c; h) -  \rho_b^{(q)}(x; h),\quad g_2(x; c) \coloneqq Z^{(q)}(x - b) - \frac{W^{(q)}(x - b)}{W^{(q)}(c - b)} Z^{(q)}(c - b), \\
    g_3(x;c) &\coloneqq \mathscr{Z}^{(q,\lambda)}_{b}(x, a) - \frac{W^{(q)}(x - b)}{W^{(q)}(c - b)} \mathscr{Z}^{(q,\lambda)}_{b}(c, a), \\
    g_4(x;c) &\coloneqq \int_a^b h(y) \left(\mathscr{W}^{(q,\lambda)}_{b}(x, y) - \frac{W^{(q)}(x - b)}{W^{(q)}(c - b)} \mathscr{W}^{(q,\lambda)}_{b}(c, y)\right)\, \diff y.
\end{align*}

(ii): Now, we compute $\mathcal{G}^h(b; c)$. Recall $\kappa_c^+ \coloneqq \inf\{t \geq 0: U^{(a)}(t) > c\}$, using the strong Markov property,
\begin{equation}\label{g(b; c)}
    g(b; c) = B_1 + B_2 g(b; c) + B_3.
\end{equation} 
In the above, by \eqref{tsep_r}, we have
\begin{align}\label{15}
\begin{split}
    B_1 & \coloneqq \E_b \left[\int_0^{\kappa_c^+ \wedge T(1)} e^{-qt} h(U^{(a)}(t))\, \diff t\right] = \frac{Z^{(q + \lambda)}(b - a)}{Z^{(q + \lambda)}(c - a)} \rho_a^{(q + \lambda)}(c; h) - \rho_a^{(q + \lambda)}(b; h),
    \end{split} \\
\label{16}
\begin{split}
    B_2 &\coloneqq \E_b \left[e^{-qT(1)}; U^{(a)}(T(1)) < b, T(1) < \kappa_c^+\right] =\lambda\E_b \left[\int_0^{\kappa_c^+} e^{-(q + \lambda)t} 1_{\{U^{(a)}(t) < b\}}\, \diff t\right]\\
    &= \frac{Z^{(q + \lambda)}(b - a)}{Z^{(q + \lambda)}(c - a)} \frac{\lambda}{\lambda + q} \left(Z^{(q + \lambda)}(c - a) - Z^{(q + \lambda)}(c - b)\right) - \frac{\lambda}{\lambda + q} \left(Z^{(q + \lambda)}(b - a) - 1\right),
    \end{split}\\
    \begin{split}\label{diff}
        B_3 &\coloneqq \E_b \left[e^{-qT(1)} g(U^{(a)}(T(1)); c); U^{(a)}(T(1)) > b, T(1) < \kappa_c^+ \right]\\
        &= \lambda\E_b\left[\int_0^{\kappa_c^+} e^{-(q + \lambda)t} g(U^{(a)}(t); c) 1_{\{U^{(a)}(t) > b\}}\, \diff t\right] = \lambda\frac{Z^{(q + \lambda)}(b - a)}{Z^{(q + \lambda)}(c - a)} \int_b^c g(y; c) W^{(q + \lambda)}(c - y)\, \diff y.
    \end{split}
\end{align}
Substituting \eqref{G_expression_recursive} into \eqref{g(b; c)}, we obtain
\begin{align} \label{G_expression_recursive_new}
    \mathcal{G}^h(b;c)&=  B_1 +  B_2  \left(  \mathcal{G}^h(b;c) -  \frac {\lambda + q}  q \rho_a^{(q + \lambda)}(b; h) \right) + B_3 +  \frac {\lambda + q}  q \rho_a^{(q + \lambda)}(b; h).
\end{align}
Using \eqref{G_expression_recursive_new} together with \eqref{15}--\eqref{diff}, we compute $\mathcal{G}^h(b; c)$. We first compute the integral $\lambda\int_b^c g(y; c) W^{(q + \lambda)}(c - y)\, \diff y$ in \eqref{diff}. Recall \eqref{9}, we apply \eqref{LRZ}, \eqref{Eq: g(x; c) misc 1}, \eqref{Eq: P1 misc 1}, and \eqref{Eq: P1 misc 3} to obtain the following simplifications:
\begin{align*}
    \lambda\int_b^cW^{(q + \lambda)}(c - y) 
    g_1(y;c) \diff y
    &= \frac{W^{(q + \lambda)}(c - b)}{W^{(q)}(c - b)} \rho_b^{(q)}(c; h) - \rho_b^{(q + \lambda)}(c; h), \\
    \lambda\int_b^c W^{(q + \lambda)}(c-y) 
    g_2(y;c) \diff y
    &= Z^{(q + \lambda)}(c - b) - \frac{W^{(q + \lambda)}(c - b)}{W^{(q)}(c - b)} Z^{(q)}(c - b), \\
    \lambda\int_b^c W^{(q + \lambda)}(c - y) 
    g_3(y;c) \diff y &
    = Z^{(q + \lambda)}(c - a) - \frac{W^{(q + \lambda)}(c - b)}{W^{(q)}(c - b)} \mathscr{Z}^{(q,\lambda)}_{b}(c, a),\\
    \lambda\int_b^c W^{(q + \lambda)}(c - y) 
    g_4(y;c)
    \diff y &= \rho_a^{(q + \lambda)}(c; h) - \rho_b^{(q + \lambda)}(c; h) -\frac{W^{(q + \lambda)}(c - b)}{W^{(q)}(c - b)} \int_a^b h(z) \mathscr{W}^{(q,\lambda)}_{b}(c, z)\, \diff z.
\end{align*}
Putting these together, we have
\begin{align} \label{g_integral}
\begin{split}
    &\lambda \int_b^c g(u; c) W^{(q + \lambda)}(c - u)\, \diff u =  \frac{W^{(q + \lambda)}(c - b)}{W^{(q)}(c - b)} \left(\rho_b^{(q)}(c; h) + \int_a^b h(z) \mathscr{W}^{(q,\lambda)}_{b}(c, z)\, \diff z\right)\\
    & + \left(\frac{\lambda}{\lambda + q}\mathcal{G}^h(b;c) - \frac{\lambda}{q} \rho_a^{(q + \lambda)}(b; h)\right) \left(Z^{(q + \lambda)}(c - b) - \frac{W^{(q + \lambda)}(c - b)}{W^{(q)}(c - b)} Z^{(q)}(c - b) \right) \\
    &+ \frac {\frac q {\lambda+q} \mathcal{G}^h(b;c)} {Z^{(q + \lambda)}(b - a)}  \left(  Z^{(q + \lambda)}(c - a) - \frac{W^{(q + \lambda)}(c - b)}{W^{(q)}(c - b)} \mathscr{Z}^{(q,\lambda)}_{b}(c, a) \right)  - \rho_a^{(q + \lambda)}(c; h),
\end{split}
\end{align}
where, in the second line, we expressed $g(b; c)$ in terms of $\mathcal{G}^h(b;c)$ using \eqref{G_expression_recursive}. Upon substituting the expressions for $B_i$, $i = 1, 2, 3$, and \eqref{g_integral} into \eqref{G_expression_recursive_new}, we obtain $\mathcal{G}^h(b;c) = A + B  \mathcal{G}^h(b;c)$, where
\begin{equation*}
    A \coloneqq \frac{Z^{(q + \lambda)}(b - a)}{Z^{(q + \lambda)}(c - a)} \frac{W^{(q + \lambda)}(c - b)}{W^{(q)}(c - b)} \mathcal{H}^{(q, \lambda)}_{b}(c, a; h), \quad B \coloneqq 1 -  \frac {Z^{(q + \lambda)}(b - a)} {Z^{(q + \lambda)}(c - a)} \frac{W^{(q + \lambda)}(c - b)}{W^{(q)}(c - b)} \mathcal{K}_{b}^{(q, \lambda)}(c, a).
\end{equation*}
Hence, $\mathcal{G}^h(b;c) = \mathcal{H}^{(q, \lambda)}_{b}(c, a; h)/\mathcal{K}_{b}^{(q, \lambda)}(c, a)$. Substituting $\mathcal{G}^h(b;c)$ into \eqref{9}, we obtain \eqref{9.1} for $b < x \leq c$. In fact, \eqref{9.1} also holds for all $x \leq b$, since in this case it reduces to \eqref{4}.
\end{proof}

\begin{proposition}\label{Prop: resolvent computation}
    Suppose $h$ satisfies the assumptions of Proposition \ref{prop_resolvent_capped}. For $a < b$ and $x \in \mathbb{R}$,
    \begin{align}\label{Eq: v^h(x)}
        v_{a, b}^{h}(x) \coloneqq \mathbb{E}_x\left[\int^\infty_0e^{-qt} h(Y^{a, b}(t))\, \diff t\right] = 
        - \mathcal{H}_{b}^{(q, \lambda)}(x, a; h) + \mathcal{G}^h(b) \mathcal{K}_{b}^{(q, \lambda)}(x, a),
    \end{align}
    where $\mathcal{G}^h(b)$ is given in \eqref{Eq: v^h(b)} with $f = h$.
\end{proposition}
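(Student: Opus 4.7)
The plan is to deduce the infinite-horizon resolvent $v_{a,b}^h$ by letting $c\to\infty$ in the capped resolvent $g(\,\cdot\,;c)$ from Proposition~\ref{prop_resolvent_capped}, with $\eta_c$ as in \eqref{def_eta}. Since the controlled process $Y^{a,b}$ is c\`adl\`ag and therefore $\mathbb{P}_x$-almost surely locally bounded on every compact time interval, the family $(\eta_c)_{c>b}$ is non-decreasing in $c$ with $\eta_c\uparrow\infty$ almost surely. Coupled with $h\ge 0$, monotone convergence gives
$$ g(x;c)=\mathbb{E}_x\Bigl[\int_0^{\eta_c}e^{-qt}h(Y^{a,b}(t))\,\diff t\Bigr]\;\uparrow\;v_{a,b}^h(x),\qquad c\uparrow\infty.$$

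On the right-hand side of \eqref{9.1}, only the ratio $\mathcal{H}^{(q,\lambda)}_b(c,a;h)/\mathcal{K}^{(q,\lambda)}_b(c,a)$ depends on $c$, so it suffices to evaluate its limit and check that it equals $\mathcal{G}^h(b)$. I would divide numerator and denominator by $Z^{(q)}(c-a)$. Lemma~\ref{Lemma: B2} immediately identifies the limit of the denominator as $\Theta^{(q+\lambda)}(b-a,\Phi(q))/Z^{(q+\lambda)}(b-a)$. For the numerator, I would expand $\mathcal{H}^{(q,\lambda)}_b(c,a;h)$ via its definition and pass to the limit in each of the three summands using the standard asymptotics
$$\frac{W^{(q)}(c-y)}{Z^{(q)}(c-a)}\longrightarrow\frac{\Phi(q)}{q}e^{-\Phi(q)(y-a)},\qquad \frac{Z^{(q)}(c-b)}{Z^{(q)}(c-a)}\longrightarrow e^{-\Phi(q)(b-a)},$$
inherited from \eqref{Eq: W, Z limits}. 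Since $h$ has compact support, both $\rho_b^{(q)}(c;h)/Z^{(q)}(c-a)$ and the integrand in $\int_a^b h(y)\mathscr{W}^{(q,\lambda)}_b(c,y)\,\diff y/Z^{(q)}(c-a)$ admit uniform domination for large $c$, and dominated convergence applies routinely.

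The key bookkeeping step is for the middle term. I would rewrite $\mathscr{W}^{(q,\lambda)}_b(c,y)$ via the alternative representation in \eqref{Eq: scale identity} (with the role of $a$ played by $y$), expressing it as $W^{(q)}(c-y)$ plus a convolution against $W^{(q+\lambda)}$ over $(y,b)$. After dividing by $Z^{(q)}(c-a)$ and passing to the limit, an exchange of integration order (mirroring the Fubini step used in \eqref{Eq: P1 misc 3}) converts the iterated integral into $\lambda\int_0^{b-a}e^{-\Phi(q)y}\rho_a^{(q+\lambda)}(y+a;h)\,\diff y$, which is precisely the cross-term appearing in \eqref{Eq: v^h(b)}. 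Assembling the three limits and multiplying by the reciprocal $Z^{(q+\lambda)}(b-a)/\Theta^{(q+\lambda)}(b-a,\Phi(q))$ reproduces $\mathcal{G}^h(b)$ exactly as given in \eqref{Eq: v^h(b)} with $f=h$, completing the proof.

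I do not anticipate any serious obstacle beyond this asymptotic bookkeeping: the compact support of $h$ makes every interchange of limit and integral immediate, and each of the limiting ratios is controlled by identities that are already compiled in the preamble of the appendix. The only point requiring care is algebraic, namely aligning the Fubini-rearranged iterated integral with the $\rho_a^{(q+\lambda)}$-term in \eqref{Eq: v^h(b)}, after which the three contributions telescope into the bracketed expression of $\mathcal{G}^h(b)$.
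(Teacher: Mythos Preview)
Your proposal is correct and follows essentially the same route as the paper: pass to the limit $c\to\infty$ in \eqref{9.1}, normalize the ratio $\mathcal{H}^{(q,\lambda)}_b(c,a;h)/\mathcal{K}^{(q,\lambda)}_b(c,a)$ by a $Z^{(q)}$-factor, invoke Lemma~\ref{Lemma: B2} for the denominator, and use \eqref{Eq: W, Z limits} together with the representation \eqref{Eq: scale identity} (plus a Fubini swap) for the numerator. The only cosmetic difference is that the paper normalizes by $Z^{(q)}(c-b)$ rather than $Z^{(q)}(c-a)$, which introduces an extra $e^{\Phi(q)(b-a)}$ factor that cancels at the end; your explicit justification of $\eta_c\uparrow\infty$ and the monotone/dominated convergence steps are more carefully spelled out than in the paper.
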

\begin{proof}
In view of \eqref{9.1}, we compute $\lim_{c \to \infty} {\mathcal{H}^{(q, \lambda)}_{b}(c, a; h)} / {\mathcal{K}_{b}^{(q, \lambda)}(c, a)}$. By \eqref{Eq: W, Z limits} and \eqref{Eq: scale identity}, the following limit holds:
\begin{equation*}
    \lim_{c \to \infty} \int_a^b h(y) \frac{\mathscr{W}^{(q,\lambda)}_{b}(c, y)}{Z^{(q)}(c-b)}\, \diff y = \frac{\Phi(q)}{q} \left( \int_a^b e^{-\Phi(q)(y - b)} h(y) \, \diff y + \lambda \int_a^b e^{-\Phi(q)(y - b)}\rho_a^{(q + \lambda)}(y; h)\, \diff y \right).
\end{equation*}
By analogous computations, we obtain $\lim_{c \to \infty} \rho_b^{(q)}(c; h)/Z^{(q)}(c-b) = q^{-1}\Phi(q)\int^\infty_b e^{-\Phi(q)(y - b)} h(y)\, \diff y$. Hence, 
\begin{equation*}
    \lim_{c \to \infty} \frac {\mathcal{H}^{(q, \lambda)}_{b}(c, a; h)}  {Z^{(q)}(c - b)}    
    = \frac{\Phi(q)}{q} \int_a^\infty e^{-\Phi(q)(y - b)} h(y) \, \diff y + \lambda\frac{\Phi(q)}{q}\int_a^b e^{-\Phi(q)(y - b)}\rho_a^{(q + \lambda)}(y, h)\, \diff y + \frac{\lambda}{q}\rho_a^{(q + \lambda)}(b; h).
\end{equation*}
On the other hand, by Lemma \ref{Lemma: B2} and \eqref{Eq: W, Z limits},
\begin{align*}
 \lim_{c \to \infty} \frac{\mathcal{K}_{b}^{(q, \lambda)}(c, a)}{Z^{(q)}(c - b)} = \lim_{c \to \infty} \frac{\mathcal{K}_{b}^{(q, \lambda)}(c, a)}{Z^{(q)}(c - a)} \frac{Z^{(q)}(c - a)}{Z^{(q)}(c - b)} &= \frac{\Theta^{(q + \lambda)}(b - a, \Phi(q))}{Z^{(q + \lambda)}(b - a)} e^{\Phi(q)(b-a)}. 
\end{align*}
Thus, $\lim_{c \to \infty} \mathcal{H}^{(q, \lambda)}_{b}(c, a; h) / \mathcal{K}_{b}^{(q, \lambda)}(c, a) = \mathcal{G}^h(b)$.
\end{proof}

To complete the proof of Lemma \ref{Lemma: inventory cost}, we first note that Proposition \ref{Prop: resolvent computation} remains valid for $h = f: \mathbb{R} \to \mathbb{R}$, given that $f$ satisfies Assumption \ref{asm: f convexity} and $X$ satisfies Assumption \ref{asm: on X}. Moreover, applying integration by parts, we find that \eqref{Eq: v^h(b)} is equivalent to \eqref{Eq: v^f(b)}. 

\subsection{Proof of Lemma \ref{Lemma: control costs}}\label{Appendix: proof of control cost}
To obtain \eqref{Eq: control costs}, we use \cite[Proposition 5.1]{perez_dual_2020}, which derives fluctuation identities for spectrally positive L\'evy processes, the dual of the spectrally negative L\'evy processes studied in this work.

(i) 
By \cite[Proposition 5.1]{perez_dual_2020} and the spatial homogeneity, for $x < c$ and $a < b < c$, 
\begin{equation*}
    \mathbb{E}_x \left[\int_{[0, \eta_c]} e^{-qt}\, \diff R_p^{a, b}(t)\right] = \mathbb{E}_{x-c} \left[\int_{[0, \eta_0]} e^{-qt}\, \diff R_p^{a-c, b-c}(t)\right] = \mathcal{K}_{b}^{(q, \lambda)}(x, a) \frac{k_{b}^{(q, \lambda)}(c, a)}{\mathcal{K}_{b}^{(q, \lambda)}(c, a)} - k_{b}^{(q, \lambda)}(x, a),
\end{equation*}
with $\eta_c$ as defined in \eqref{def_eta} and $\mathcal{K}_{b}^{(q, \lambda)}(c, a)$ as defined in \eqref{Eq: script K}.  For $y \in \mathbb{R}$,
\begin{align}
    k_{b}^{(q, \lambda)}(y, a) &\coloneqq \frac{\lambda}{\lambda + q} \left(\overline{Z}^{(q)}(y - b) - \frac{\lambda}{q}\overline{Z}^{(q + \lambda)}(b - a) Z^{(q)}(y - b) - \overline{\mathscr{Z}}^{(q,\lambda)}_{b}(y, a)\right). \label{Eq: k}
\end{align}
We seek $\lim_{c \to \infty} k_{b}^{(q, \lambda)}(c, a)/\mathcal{K}_{b}^{(q, \lambda)}(c, a)$. First, we compute
\begin{align}
\begin{split}\label{Eq: control misc 2}
    \lim_{c \to \infty} \frac{k_{b}^{(q, \lambda)}(c, a)}{Z^{(q)}(c - a)} &= \lim_{c \to \infty} \frac{\lambda}{\lambda + q} \left(\frac{\overline{Z}^{(q)}(c - b)}{Z^{(q)}(c - a)} - \frac{\lambda}{q}\overline{Z}^{(q + \lambda)}(b - a) \frac{Z^{(q)}(c - b)}{Z^{(q)}(c - a)} - \frac{\overline{\mathscr{Z}}^{(q,\lambda)}_{b}(c, a)}{{Z^{(q)}(c - a)}}\right) \\
    &= \lim_{c \to \infty} \frac{\lambda}{\lambda + q} \left(\frac{e^{-\Phi(q)(b - a)}}{\Phi(q)} - \frac{\lambda}{q} \overline{Z}^{q + \lambda}(b - a) e^{-\Phi(q)(b - a)} - \frac{\overline{\mathscr{Z}}^{(q,\lambda)}_{b}(c, a)}{{Z^{(q)}(c - a)}}\right). 
\end{split}
\end{align}
Using identity \eqref{Eq: scale identity}, we obtain
\begin{equation}
    \lim_{c \to \infty} \frac{\overline{\mathscr{Z}}^{(q,\lambda)}_{b}(c, a)}{{Z^{(q)}(c - a)}} = \frac{1}{\Phi(q)} + \frac{\lambda}{q}\int_{0}^{b - a} \Phi(q) e^{-\Phi(q)y} \overline{Z}^{(q + \lambda)}(y)\, \diff y. \label{Eq: control misc 3}
\end{equation}
Substituting \eqref{Eq: control misc 3} into \eqref{Eq: control misc 2} and simplifying with integration by parts, we obtain
\begin{equation}\label{Eq: control misc 4}
    \lim_{c \to \infty} \frac{k_{b}^{(q, \lambda)}(c, a)}{Z^{(q)}(c - a)} = 
    \frac{\lambda}{\lambda + q}\frac{e^{-\Phi(q)(b - a)}}{\Phi(q)} \left(1 + \frac{\lambda}{q} Z^{(q + \lambda)}(b - a)\right) - \frac{\lambda}{q}\frac{1}{\Phi(q)} \Theta^{(q + \lambda)}(b - a, \Phi(q)).
\end{equation}
Combining \eqref{Eq: control misc 1} and \eqref{Eq: control misc 4}, we obtain, for  $k_1$ as defined in \eqref{Eq: k_1},
\begin{align}\label{Eq: dRp}
    \mathbb{E}_x \left[\int_{[0, \infty)} e^{-qt}\, \diff R_p^{a, b}(t)\right] &= \mathcal{K}_{b}^{(q, \lambda)}(x, a) k_1 - k_{b}^{(q, \lambda)}(x, a).
\end{align}

(ii) By \cite[Proposition 5.2]{perez_dual_2020} and the spatial homogeneity of L\'evy processes, for $a < b < c$ and $x < c$, 
\begin{equation*}
    \mathbb{E}_x \left[\int_{[0, \eta_c]} e^{-qt}\, \diff R_c^{a, b}(t)\right] = \mathbb{E}_{x - c} \left[\int_{[0, \eta_0]} e^{-qt}\, \diff R_c^{a - c, b - c}(t)\right] = \mathcal{K}_{b}^{(q, \lambda)}(x, a) \frac{i_{b}^{(q, \lambda)}(c, a)}{\mathcal{K}_{b}^{(q, \lambda)}(c, a)} - i_{b}^{(q, \lambda)}(x, a),
\end{equation*}
where, for $y \in \mathbb{R}$,
\begin{equation}\label{Eq: i}
    i_{b}^{(q, \lambda)}(y, a) \coloneqq \overline{\mathscr{Z}}^{(q,\lambda)}_{b}(y, a) - \psi'(0+)\overline{\mathscr{W}}^{(q,\lambda)}_{b}(y, a) + \frac{\lambda}{q} Z^{(q)}(y - b) \left(\overline{Z}^{(q + \lambda)}(b - a) - \psi'(0+) \overline{W}^{(q + \lambda)}(b - a)\right).
\end{equation}
We seek $\lim_{c \to \infty} i_{b}^{(q, \lambda)}(c, a)/\mathcal{K}_{b}^{(q, \lambda)}(c, a)$. Using \eqref{Eq: scale identity} together with the definitions of $Z^{(q)}$ and $Z^{(q + \lambda)}$, and the limits in \eqref{Eq: W, Z limits}, we have 
\begin{equation*}
    \lim_{c \to \infty} \frac{\overline{\mathscr{W}}^{(q,\lambda)}_{b}(c, a)}{Z^{(q)}(c - a)} = \frac{1}{q} + \frac{\lambda}{q}\int_{0}^{b - a} \Phi(q) e^{-\Phi(q)y} \overline{W}^{(q + \lambda)}(y)\, \diff y.
\end{equation*}
Using \eqref{Eq: control misc 3} and the above limit, and simplifying by integration by parts, we obtain
\begin{equation*}
    \lim_{c \to \infty} \frac{i_{b}^{(q, \lambda)}(c, a)}{Z^{(q)}(c - a)}
    = -\frac{\lambda}{q}\frac{1}{\Phi(q)} e^{-\Phi(q)(b - a)} Z^{(q + \lambda)}(b - a) + \left(\frac{\lambda + q}{q} \frac{1}{\Phi(q)} - \frac{\psi'(0+)}{q}\right)\Theta^{(q + \lambda)}(b - a, \Phi(q)).
\end{equation*}
Combining $\lim_{c \to \infty} i_{b}^{(q, \lambda)}(c, a)/Z^{(q)}(c - a)$ and \eqref{Eq: control misc 1}, we obtain
\begin{align}\label{Eq: dRc}
    \mathbb{E}_x \left[\int_{[0, \infty)} e^{-qt}\, \diff R_c^{a, b}(t)\right] &= \mathcal{K}_{b}^{(q, \lambda)}(x, a) k_2 - i_{b}^{(q, \lambda)}(x, a).
\end{align}
Finally, by substituting \eqref{Eq: script K}, \eqref{Eq: k_1}, \eqref{Eq: k_2}, \eqref{Eq: k}, and \eqref{Eq: i} into \eqref{Eq: dRp} and \eqref{Eq: dRc}, we obtain \eqref{Eq: control costs}.

\subsection{Proof of Lemma \ref{Lemma: NPV of costs}}\label{Appendix: proof of NPV of costs}
By combining \eqref{Eq: inventory cost} and \eqref{Eq: control costs}, we have
\begin{align}\label{Eq: NPV of costs misc 1}
\begin{split}
    v_{a, b}(x) &=
    - \mathcal{H}^{(q, \lambda)}_{b}(x, a; f)
    + Z^{(q + \lambda)}(b-a) \mathcal{K}^{(q,\lambda)}_{b}(x, a)
    A - \frac{\lambda}{\lambda + q} K_p \overline{Z}^{(q)}(x - b)\\
    &~~ + K_c \psi'(0+) \left(\overline{\mathscr{W}}^{(q,\lambda)}_{b}(x, a) + \frac{\lambda}{q} Z^{(q)}(x - b) \overline{W}^{(q + \lambda)}(b - a) \right)\\
    &~~ + \left(\frac{\lambda}{\lambda + q} K_p - K_c\right) \left(\overline{\mathscr{Z}}^{(q,\lambda)}_{b}(x, a) + \frac{\lambda}{q} Z^{(q)}(x - b) \overline{Z}^{(q + \lambda)}(b - a)\right),
\end{split}
\end{align}
where
\begin{equation}\label{Eq: NPV of costs misc 2}
    A \coloneqq \frac{\mathcal{G}^f(b)
    }{Z^{(q + \lambda)}(b - a)} +\frac{K_p k_1 + K_c k_2}{Z^{(q + \lambda)}(b - a)}.
\end{equation}
We proceed to simplify $A$. By \eqref{Eq: v^f(b)},
\begin{equation}\label{Eq: misc 3}
    \frac{
    \mathcal{G}^f(b)
    }{Z^{(q + \lambda)}(b - a)} = \frac{1}{q} \left(\frac{ \int_0^\infty e^{-\Phi(q)y} f'(y + a) \diff y + \lambda \int_0^{b - a} e^{-\Phi(q)y}\rho_a^{(q + \lambda)}(y + a; f') \, \diff y}{\Theta^{(q + \lambda)}(b - a, \Phi(q))} + f(a)\right).
\end{equation}
We rewrite the above expression using $\tilde{f}'$ as defined in \eqref{Eq: tilde f}. Note that
\begin{align}
    \int_0^\infty e^{-\Phi(q)y} f'(y + a) \diff y &= \int_0^\infty e^{-\Phi(q)y} \tilde{f}'(y + a) \diff y + \left(\frac{\lambda}{\lambda + q} K_p - K_c\right)\frac{\lambda + q}{\Phi(q)} \label{Eq: misc 4}, \\
    \rho_a^{(q + \lambda)}(x; f') &= \rho_a^{(q + \lambda)}(x; \tilde{f}') + \left(\frac{\lambda}{\lambda + q} K_p - K_c\right)(\lambda + q) \overline{W}^{(q + \lambda)}(x - a), \quad x \in \mathbb{R}. \label{Eq: misc 5}
\end{align}
By \eqref{Eq: misc 5} and integration by parts, we have
\begin{multline}\label{Eq: misc 6}
    \int_0^{b - a} e^{-\Phi(q)y}\rho_a^{(q + \lambda)}(y + a; f') \, \diff y = \int_0^{b - a} e^{-\Phi(q)y} \rho_a^{(q + \lambda)}(y + a; \tilde{f}') \, \diff y\\
    - \left(\frac{\lambda}{\lambda + q} K_p - K_c\right) \frac{\lambda + q}{\Phi(q)} \left(e^{-\Phi(q)(b - a)} \overline{W}^{(q + \lambda)}(b - a) -  \int_0^{b - a} e^{-\Phi(q)y} W^{(q + \lambda)}(y) \, \diff y\right).
\end{multline}
Substituting \eqref{Eq: misc 4} and \eqref{Eq: misc 6} into \eqref{Eq: misc 3}, we have
\begin{multline*}
    \frac{\mathcal{G}^f(b)}{Z^{(q + \lambda)}(b - a)} = \frac{1}{q}\frac{\int_0^\infty e^{-\Phi(q)y} \tilde{f'}(y + a) \diff y + \lambda\int_0^{b - a} e^{-\Phi(q)y}\rho_a^{(q + \lambda)}(y + a; \tilde{f}') \, \diff y}{\Theta^{(q + \lambda)}(b - a, \Phi(q))}\\
    -  \frac {\lambda+q} q \frac{\frac{\lambda}{\Phi(q)} \left(\frac{\lambda}{\lambda + q} K_p - K_c\right) e^{-\Phi(q)(b - a)} \overline{W}^{(q + \lambda)}(b - a)}{\Theta^{(q + \lambda)}(b - a, \Phi(q))} +  \frac {\lambda+q} q \frac{1}{\Phi(q)} \left(\frac{\lambda}{\lambda + q} K_p - K_c\right) +  \frac{f(a)}{q}.
\end{multline*}
Now, using \eqref{Eq: k_1} and \eqref{Eq: k_2}, we obtain
\begin{multline*}
    \frac{K_p k_1 + K_c k_2}{Z^{(q + \lambda)}(b - a)} = \frac{\lambda + q}{q} \frac{\frac{\lambda}{\Phi(q)} \left(\frac{\lambda}{\lambda + q} K_p - K_c\right) e^{-\Phi(q)(b - a)} \overline{W}^{(q + \lambda)}(b - a) + \frac{1}{\lambda + q}\frac{\lambda}{\Phi(q)} \left(K_p - K_c\right) e^{-\Phi(q)(b - a)}}{\Theta^{(q + \lambda)}(b - a, \Phi(q))}\\
    - \frac{\lambda + q}{q}\frac{1}{\Phi(q)} \left(\frac{\lambda}{\lambda + q} K_p - K_c\right) - \frac{K_c\psi'(0+)}{q}.
\end{multline*}
Hence, after simplification, 
\begin{equation}\label{Eq: NPV of costs misc 3}
    A = \frac{1}{q} \left(\frac{\Gamma(a, b)}{\Theta^{(q + \lambda)}(b - a, \Phi(q))} + f(a) - K_c\psi'(0+)\right),
\end{equation}
where $\Gamma$ is defined in \eqref{Eq: Gamma}. Substituting $A$ into \eqref{Eq: NPV of costs misc 1}, we obtain \eqref{Eq: NPV of costs}.

\subsection{Proof of Lemma \ref{Lemma: NPV derivatives}}\label{Appendix: proof of NPV derivatives}
It suffices to establish \eqref{Eq: NPV derivative}, since \eqref{Eq: NPV derivative 2} follows from \eqref{Eq: gamma} together with \eqref{Eq: NPV derivative}, and \eqref{Eq: NPV 2nd derivative} follows by differentiation of \eqref{Eq: NPV derivative}. By the smoothness of the scale functions, for $x \in \mathbb{R}\backslash \{a, b\}$:
\begin{align}
    \overline{\mathscr{W}}^{(q,\lambda)'}_{b}(x, a) &= -\lambda W^{(q)}(x - b) \overline{W}^{(q + \lambda)}(b - a) + \mathscr{W}^{(q,\lambda)}_{b}(x, a), \label{Eq: bar W scr derivative} \\
    \mathscr{Z}^{(q,\lambda)'}_{b}(x, a) &= -\lambda W^{(q)}(x - b) Z^{(q + \lambda)}(b - a) + (q + \lambda)\mathscr{W}^{(q,\lambda)}_{b}(x, a), \label{Eq: W scr derivative} \\
    \overline{\mathscr{Z}}^{(q,\lambda)'}_{b}(x, a) &= -\lambda W^{(q)}(x - b) \overline{Z}^{(q + \lambda)}(b - a) + \mathscr{Z}^{(q,\lambda)}_{b}(x, a), \label{Eq: bar Z scr derivative}
\end{align}
where the derivatives are taken with respect to the first argument. Using \eqref{Eq: bar W scr derivative}--\eqref{Eq: bar Z scr derivative} to take the derivative of \eqref{Eq: control costs}, 
\begin{multline}\label{Eq: control cost derivative}
    \frac{\diff}{\diff x} v^{r}_{a, b}(x) = q\frac{\mathscr{W}^{(q,\lambda)}_{b}(x, a)}{Z^{(q + \lambda)}(b - a)} (K_p k_1 + K_c k_2) - \frac{\lambda}{\lambda + q} K_p Z^{(q)}(x - b) + K_c \psi'(0+) \mathscr{W}^{(q,\lambda)}_{b}(x, a)\\
    + \left(\frac{\lambda}{\lambda + q} K_p - K_c\right) \mathscr{Z}^{(q,\lambda)}_{b}(x, a), \quad x \in \mathbb{R}\backslash \{a\}.
\end{multline}
Similarly, for $x \in \mathbb{R}\backslash \{a, b\}$, by differentiating \eqref{Eq: inventory cost}, we have
\begin{equation}\label{Eq: inventory derivative misc 1}
    \frac{\diff }{\diff x} v_{a, b}^{f}(x) = -(f(b) + \lambda \rho_a^{(q + \lambda)}(b; f) ) W^{(q)}(x - b) - \rho_b^{(q)}(x; f') - \frac{\diff }{\diff x} \int_a^b f(y) \mathscr{W}^{(q,\lambda)}_{b}(x, y)\, \diff y + q\mathcal{G}^f(b)\frac{\mathscr{W}^{(q,\lambda)}_{b}(x, a)}{Z^{(q + \lambda)}(b - a)}.
\end{equation}
By integration by parts, for $x \in \mathbb{R}\backslash \{a, b\}$, 
\begin{equation}\label{Eq: derivative misc 1}
    \frac{\diff }{\diff x}\int_a^b f(y) W^{(q + \lambda)}(x - y)\, \diff y = f(a) W^{(q + \lambda)}(x - a) - f(b) W^{(q + \lambda)}(x - b) + \int_a^b f'(y) W^{(q + \lambda)}(x - y)\, \diff y
\end{equation}
and
\begin{multline*}
    \frac{\diff }{\diff x}\int_a^b f(y)\left(\lambda \int_b^x W^{(q)}(x - z) W^{(q + \lambda)}(z - y)\, \diff z\right)\, \diff y\\
    = \lambda W^{(q)}(x - b) \rho_a^{(q + \lambda)}(b; f) + \int_a^b f(y) \left(\lambda \int_b^x W^{(q)}(x - z) W^{(q + \lambda)'}(z - y)\, \diff z\right)\, \diff y.
\end{multline*}
Applying integration by parts once more, together with \eqref{LRZ}, the right-hand side can be written as
\begin{multline}\label{Eq: derivative misc 2}
    \lambda W^{(q)}(x - b) \rho_a^{(q + \lambda)}(b; f) + f(a) \lambda \int_b^x W^{(q)}(x - z) W^{(q + \lambda)}(z - a)\, \diff z \\
    - f(b)\left(W^{(q + \lambda)}(x - b)- W^{(q)}(x - b)\right) + \int_a^b f'(y) \left(\lambda \int_b^x W^{(q)}(x - z) W^{(q + \lambda)}(z - y)\, \diff z\right)\, \diff y.
\end{multline}
Substituting \eqref{Eq: derivative misc 1} and \eqref{Eq: derivative misc 2} into \eqref{Eq: inventory derivative misc 1}, we obtain the following simplification
\begin{equation}\label{Eq: inventory derivative misc 2}
    \frac{\diff}{\diff x} v_{a, b}^{f}(x) = -\rho_b^{(q)}(x; f') - \int_a^b f'(y) \mathscr{W}^{(q,\lambda)}_{b}(x, y)\, \diff y - f(a) \mathscr{W}^{(q,\lambda)}_{b}(x, a) + q\mathcal{G}^f(b) \frac{\mathscr{W}^{(q,\lambda)}_{b}(x, a)}{Z^{(q + \lambda)}(b - a)}.
\end{equation}
Now, we rewrite \eqref{Eq: inventory derivative misc 2} using $\tilde{f}'$ as defined in \eqref{Eq: tilde f}. Equation \eqref{Eq: misc 5} together with \eqref{Eq: scale identity} and Fubini's theorem yields
\begin{align}
\begin{split}
    &\rho_b^{(q)}(x; f') + \int_a^b f'(y) \mathscr{W}^{(q,\lambda)}_{b}(x, y)\, \diff y \\
    &= \rho_b^{(q)}(x; \tilde{f}') + \int_a^b \tilde{f}'(y) \mathscr{W}^{(q,\lambda)}_{b}(x, y)\, \diff y + \left(\frac{\lambda}{\lambda + q} K_p - K_c\right)\left(\mathscr{Z}^{(q,\lambda)}_{b}(x, a) - \left(1 - \lambda\overline{W}^{(q)}(x - b)\right)\right).
\end{split}\label{Eq: inventory derivative misc 3}
\end{align}

Substituting \eqref{Eq: inventory derivative misc 3} into \eqref{Eq: inventory derivative misc 2}, then combining with \eqref{Eq: control cost derivative}, on $\mathbb{R}\backslash \{a\}$, we have 
\begin{multline}\label{Eq: NPV derivative misc 1}
    v_{a, b}'(x) = -\rho_b^{(q)}(x; \tilde{f}') - \int_a^b \tilde{f}'(y) \mathscr{W}^{(q,\lambda)}_{b}(x, y)\, \diff y + \lambda (K_c-K_p) \overline{W}^{(q)}(x - b) - K_c\\
    + \mathscr{W}^{(q,\lambda)}_{b}(x, a)\left(
    qA + K_c \psi'(0+) - f(a)\right),
\end{multline}
where $A$ is defined in \eqref{Eq: NPV of costs misc 2}, and it is helpful to note that 
\begin{equation*}
    \lambda (K_c-K_p) \overline{W}^{(q)}(x - b) - K_c = -\frac{\lambda}{\lambda + q} K_p Z^{(q)}(x - b) + \left(\frac{\lambda}{\lambda + q} K_p - K_c\right) \left(1 - \lambda\overline{W}^{(q)}(x - b)\right). 
\end{equation*}
Substituting \eqref{Eq: NPV of costs misc 3} into \eqref{Eq: NPV derivative misc 1}, we obtain \eqref{Eq: NPV derivative}.

\subsection{Proof of Lemma \ref{Lemma: auxiliary result}}\label{Appendix: proof of auxiliary result} 
By the resolvent measure obtained in \eqref{Eq: v^h(x)}, we have
\begin{equation}\label{Eq: v^f' misc 0}
    v_{a, b}^{f'}(x) = \mathbb{E}_x\left[\int^\infty_0 e^{-qt} f'(Y^{a, b}(t))\, \diff t\right] = 
    - \mathcal{H}_{b}^{(q, \lambda)}(x, a; f') + \mathcal{G}^{f'}(b) \mathcal{K}_{b}^{(q, \lambda)}(x, a).
\end{equation}
We write the above in terms of $\tilde{f}'$. First, using \eqref{Eq: misc 5} and \eqref{Eq: inventory derivative misc 3}, we have 
\begin{equation}\label{Eq: H^f'}
    \mathcal{H}_{b}^{(q, \lambda)}(x, a; f') = \mathcal{H}_{b}^{(q, \lambda)}(x, a; \tilde{f}') + \left(\frac{\lambda}{\lambda + q} K_p - K_c\right)  \frac{\lambda + q}{q} Z^{(q + \lambda)}(b - a) \mathcal{K}_{b}^{(q, \lambda)}(x, a) - \left(\frac{\lambda}{\lambda + q} K_p - K_c\right) \frac{\lambda + q}{q}.
\end{equation}
Using \eqref{Eq: misc 4}--\eqref{Eq: misc 6} and \eqref{Eq: v^h(b)} with $f = f'$, we write $\mathcal{G}^{f'}(b)$ in terms of $\tilde{f}'$,
\begin{align*}
    \mathcal{G}^{f'}(b) &= \frac{\frac{1}{q}\int_0^\infty \Phi(q)e^{-\Phi(q)y} \tilde{f}'(y + a) \diff y + \frac{\lambda}{q} \int_0^{b - a} \Phi(q) e^{-\Phi(q)y} \rho_a^{(q + \lambda)}(y + a; \tilde{f}') \, \diff y}{\Theta^{(q + \lambda)}(b - a, \Phi(q))}Z^{(q + \lambda)}(b - a)\\
    &+ \frac{e^{-\Phi(q)(b - a)}\frac{\lambda}{q} \rho_a^{(q + \lambda)}(b; \tilde{f}')}{\Theta^{(q + \lambda)}(b - a, \Phi(q))} Z^{(q + \lambda)}(b - a) + \frac{\lambda + q}{q} \left(\frac{\lambda}{\lambda + q} K_p - K_c\right) Z^{(q + \lambda)}(b - a).
\end{align*}
Recall $\Gamma$ as defined in \eqref{Eq: Gamma} and $\gamma$ as defined in \eqref{Eq: gamma}. Adding and subtracting to $\mathcal{G}^{f'}(b)$ the following, 
\begin{equation*}
    \frac{\frac{\Phi(q)}{q}\frac{\lambda}{\Phi(q)} e^{-\Phi(q)(b - a)} (K_p - K_c)}{\Theta^{(q + \lambda)}(b - a, \Phi(q))}Z^{(q + \lambda)}(b - a) = \frac{\frac{\lambda}{q} e^{-\Phi(q)(b - a)} (K_p - K_c)}{\Theta^{(q + \lambda)}(b - a, \Phi(q))}Z^{(q + \lambda)}(b - a),
\end{equation*}
we obtain 
\begin{equation}\label{Eq: G^f'}
    \mathcal{G}^{f'}(b) = \frac{\frac{\Phi(q)}{q} \Gamma(a, b) - \frac{\lambda}{q} e^{-\Phi(q)(b - a)} \gamma(a, b)}{\Theta^{(q + \lambda)}(b - a, \Phi(q))} Z^{(q + \lambda)}(b - a) + \frac{\lambda + q}{q} \left(\frac{\lambda}{\lambda + q} K_p - K_c\right) Z^{(q + \lambda)}(b - a). 
\end{equation}
Substituting \eqref{Eq: H^f'} and \eqref{Eq: G^f'} into \eqref{Eq: v^f' misc 0} yields \eqref{Eq: v^f'}. Evaluating \eqref{Eq: v^f'} at $x = a$ and $x = b$, and using identity \eqref{Eq: gamma}, we obtain \eqref{Eq: v^f' at a} and \eqref{Eq: v^f' at b}, respectively.

\subsection{Proof of Lemma \ref{Lemma: Gamma asymptotics}}\label{Appendix: proof of gamma asymptotics}
As $b \mapsto \Gamma(a, b)$ is continuous on $(a, \infty)$, Equation \eqref{Eq: Gamma 1} holds by setting $b = a+$ in \eqref{Eq: Gamma}. To establish \eqref{Eq: Gamma 2}, applying L'H\^opital's rule, we have
\begin{equation}\label{Eq: end value misc 1}
    \Gamma_2(a) = \lim_{b \to \infty} \frac{\rho_a^{(q + \lambda)}(b; \tilde{f}')}{W^{(q + \lambda)}(b - a)} = \lim_{b \to \infty} \int_a^b \tilde{f}'(y) \frac{W^{(q + \lambda)}(b - y)}{W^{(q + \lambda)}(b - a)} \, \diff y = \int_0^\infty e^{-\Phi(q + \lambda)y} \tilde{f}'(y + a)\, \diff y,
\end{equation}
where, for the last equality, we use the fact that $\frac{W^{(q + \lambda)}(b - y)}{W^{(q + \lambda)}(b - a)}$ monotonically increases to $e^{-\Phi(q + \lambda)(y - a)}$ as $b \to \infty$ for $a < y$. This follows from \eqref{Eq: W, Z limits} together with the fact that $\frac{\partial}{\partial b}\frac{W^{(q + \lambda)}(b - y)}{W^{(q + \lambda)}(b - a)} \geq 0$ for $a < y < b$ (see Equation (5.2) and Remark 2.1(3) of \cite{baurdoux_optimality_2015}). 
Finally, the strict monotonicity of $\Gamma_1$ and $\Gamma_2$ on $(-\infty, \bar{\bar{a}}]$ follows from the convexity of $\tilde{f}$ together with Assumption \ref{asm: f slope}. Moreover, the continuity of $\Gamma_1$ and $\Gamma_2$ follows from Assumption \ref{asm: f convexity} and dominated convergence.

\subsection{Proof of Lemma \ref{Lemma: bounds for a underline}}\label{Appendix: proof of bounds for a underline}
To verify the bound for $\underline{a}_2$, first, by monotone convergence and the convexity of $\tilde{f}$, we have $\Gamma_2(a) \xrightarrow{a \downarrow -\infty}\int^{\infty}_0 e^{-\Phi(q + \lambda) z} \tilde{f}'(-\infty)\, \diff z \in [-\infty, 0)$. Hence, $\underline{a}_2 > -\infty$. On the other hand, $\tilde{f}'(y) \geq 0$ for all $y \geq \bar{a}$, and with strict inequality for $y \geq \bar{\bar{a}}$. Therefore, $\Gamma_2(\bar{a}) > 0$, which, by the monotonicity of $\Gamma_2$, implies $\underline{a}_2 < \bar{a}$. 

To prove the bounds for $\underline{a}_1$, we apply monotone convergence and the inequality $K_p < K_c$ to show that $\underline{a}_1 > -\infty$. By \eqref{Eq: Gamma 1} and \eqref{Eq: tilde f}, we have $\Gamma_1(a) = \int_0^\infty e^{-\Phi(q)y} \left(f'(y + a) + qK_c\right) \diff y > \int_0^\infty e^{-\Phi(q)y} \left(f'(y + a) + qK_p\right) \diff y$. Then, it is clear that $\Gamma_1(\bar{\bar{a}}) > 0$. By the monotonicity of $\Gamma_1$, we have $\underline{a}_1 \leq \bar{\bar{a}}$. 

\subsection{Proof of Lemma \ref{Lemma: rho monotone}}\label{Appendix: proof of rho monotone}

We first show the statement for the case $a < \underline{a}_2$, and then for the case $a = \underline{a}_2$. Fix $a < \underline{a}_2$, taking a derivative of $b \mapsto \rho_{a}^{(q + \lambda)}(b; \tilde{f}')$ with respect to $b$, we have
\begin{multline}\label{Eq: rho prime}
  \frac{\partial}{\partial b} \rho_{a}^{(q + \lambda)}(b; \tilde{f}') = \int_{a}^b \tilde{f}'(y) W^{(q + \lambda)'}(b - y)\, \diff y + \tilde{f}'(b-) W^{(q + \lambda)}(0) \\
    = \int_{a}^b \tilde{f}''(y) W^{(q + \lambda)}(b - y) \, \diff y + \sum_{a < y < b} W^{(q + \lambda)}(b - y) \left[\tilde{f}'(y+) - \tilde{f}'(y-)\right] + \tilde{f}'(a+) W^{(q + \lambda)}(b - a).
\end{multline}
Dividing both sides by $W^{(q + \lambda)}(b - a)$ and taking another derivative with respect to $b$, we have
\begin{multline} \label{derivative_fraction_rho}
    \frac{\partial}{\partial b} \frac{\frac{\partial}{\partial b} \rho_{a}^{(q + \lambda)}(b; \tilde{f}')}{W^{(q + \lambda)}(b - a)} = \int_{a}^b \tilde{f}''(y) \frac{\partial}{\partial b}\frac{W^{(q + \lambda)}(b - y)}{W^{(q + \lambda)}(b - a)} \, \diff y + \tilde{f}''(b-) \frac{W^{(q + \lambda)}(0)}{W^{(q + \lambda)}(b - a)}\\
    + \sum_{a < y < b} \frac{\partial}{\partial b}\frac{W^{(q + \lambda)}(b - y)}{W^{(q + \lambda)}(b - a)} \left[\tilde{f}'(y+) - \tilde{f}'(y-)\right] \geq 0,
\end{multline}
where the inequality follows from $\tilde{f}'' \geq 0$ almost everywhere and the fact that $\frac{\partial}{\partial b}\frac{W^{(q + \lambda)}(b - y)}{W^{(q + \lambda)}(b - a)} \geq 0$ for $a < y < b$. This implies that $b \mapsto \frac{\partial}{\partial b} \rho_{a}^{(q + \lambda)}(b; \tilde{f}')$ is either (i) first negative and then positive, (ii) uniformly non-positive, or (iii) uniformly non-negative. We can immediately eliminate (iii)  since $\tilde{f}'(a) < 0$, which implies that, in \eqref{Eq: rho prime}, $\frac{\partial}{\partial b} \rho_{a}^{(q + \lambda)}(a + \varepsilon; \tilde{f}') < 0$ for every sufficiently small $\varepsilon > 0$, where $\tilde{f}' (\cdot) < 0$ on $[a, a+ \varepsilon]$. Additionally, (i) is not possible. Indeed, if (i) holds then $\rho_{a}^{(q + \lambda)}(y + a; \tilde{f}') \geq \rho_{a}^{(q + \lambda)}(\tilde{b}; \tilde{f}')$ for some $\tilde{b} > a$ and every $y \geq 0$. This implies 
\begin{equation*}
    \liminf_{b \to \infty}\Gamma(a, b) \geq  \liminf_{b \to \infty} \left(\int_0^\infty e^{-\Phi(q)y} \tilde{f'}(y + a) \, \diff y + \lambda \int_0^{b - a} e^{-\Phi(q)y}\rho_{a}^{(q + \lambda)} (\tilde{b}; \tilde{f}') \, \diff y  - \frac{\lambda}{\Phi(q)} e^{-\Phi(q)(b - a)} \left(K_c - K_p\right) \right).
\end{equation*}
Note that the right-hand side is finite, which contradicts \eqref{Gamma_b_asymp}.
Thus, (ii) always holds, i.e., $\frac{\partial}{\partial b} \rho_{a}^{(q + \lambda)}(b; \tilde{f}') \leq 0$ for $b > a$. We further show that  $\frac{\partial}{\partial b} \rho_{a}^{(q + \lambda)}(b; \tilde{f}') < 0$ for $b > a$. 

Suppose that $\frac{\partial}{\partial b} \rho_{a}^{(q + \lambda)}(b_0; \tilde{f}') = 0$ for some $b_0 > a$. Since $\frac{\partial}{\partial b} \rho_{a}^{(q + \lambda)}(b_0; \tilde{f}')/W^{(q + \lambda)}(b_0 - a) = 0$, the inequality established in \eqref{derivative_fraction_rho} yields
\begin{equation*}
    \frac{\frac{\partial}{\partial b} \rho_{a}^{(q + \lambda)}(b; \tilde{f}')}{W^{(q + \lambda)}(b - a)} \geq 0, \quad b > b_0 \implies \frac{\frac{\partial}{\partial b} \rho_{a}^{(q + \lambda)}(b; \tilde{f}')}{W^{(q + \lambda)}(b - a)} = 0, \quad b > b_0,
\end{equation*}
where the implication follows from the standing assumption of case (ii), i.e., $b \mapsto \frac{\partial}{\partial b} \rho_{a}^{(q + \lambda)}(b; \tilde{f}')$ is uniformly non-positive. Note that we must have
\begin{equation*}
    \frac{\partial}{\partial b} \rho_{a}^{(q + \lambda)}(b; \tilde{f}') = 0, \quad b > b_0.
\end{equation*}
Hence, it holds that $\lim_{b \to \infty} \rho_{a}^{(q + \lambda)}(b; \tilde{f}') = \rho_{a}^{(q + \lambda)}(b_0; \tilde{f}') \in \mathbb{R}$. In view of \eqref{Eq: Gamma}, this implies $\lim_{b \to \infty} \Gamma(a, b) \in \mathbb{R}$, which contradicts \eqref{Gamma_b_asymp}. Thus, we conclude that $\frac{\partial}{\partial b} \rho_{a}^{(q + \lambda)}(b; \tilde{f}') < 0$ for all $b > a$, and hence $b \mapsto \rho_{a}^{(q + \lambda)}(b; \tilde{f}')$ is strictly decreasing. This observation, in conjunction with $\rho_{a}^{(q + \lambda)}(a+; \tilde{f}') = 0$, implies that $\rho_{a}^{(q + \lambda)}(b; \tilde{f}')$ is negative for all $b > a$.

Now, set $a = \underline{a}_2$. An analogous argument to that used for the case $a < \underline{a}_2$ applies to eliminate case (iii) for the function $b \mapsto \frac{\partial}{\partial b} \rho_{\underline{a}_2}^{(q + \lambda)}(b; \tilde{f}')$. To further eliminate case (i), suppose that it holds for the sake of contradiction. Since $\frac{\partial}{\partial b} \rho_{\underline{a}_2}^{(q + \lambda)}(b_1; \tilde{f}') > c_0$ for some $b_1 > \underline{a}_2$ and $c_0 > 0$, by the inequality established in \eqref{derivative_fraction_rho}, there exists some $c_1 > 0$ such that
\begin{equation*}
    \frac{\frac{\partial}{\partial b} \rho_{\underline{a}_2}^{(q + \lambda)}(b; \tilde{f}')}{W^{(q + \lambda)}(b - \underline{a}_2)} \geq \frac{\frac{\partial}{\partial b} \rho_{\underline{a}_2}^{(q + \lambda)}(b_1; \tilde{f}')}{W^{(q + \lambda)}(b_1 - \underline{a}_2)} > c_1, \quad b > b_1. 
\end{equation*}
Since $W^{(q + \lambda)}(x) \sim \frac{e^{\Phi(q + \lambda)x}}{\psi'(\Phi(q + \lambda))}$ as $x \to \infty$, it follows from the above inequality and L'H\^opital's rule that $\lim_{b \to \infty} \rho_{\underline{a}_2}^{(q + \lambda)}(b; \tilde{f}')/ W^{(q + \lambda)}(b - \underline{a}_2) > 0$. In view of \eqref{Eq: end value misc 1}, this contradicts the definition of $\underline{a}_2$, which satisfies $\Gamma_2(\underline{a}_2) = 0$. Hence, case (i) can also be eliminated. Since case (ii) always holds, it follows that $b \mapsto \rho_{\underline{a}_2}^{(q + \lambda)}(b; \tilde{f}')$ is non-increasing and non-positive for all $b > \underline{a}_2$.

\subsection{Proof of Lemma \ref{Lemma: alt form gamma}}\label{Appendix: proof of alt form gamma}
By Fubini's theorem, we have
\begin{equation*}
    \int_0^{b - a} e^{-\Phi(q)y} \rho_a^{(q + \lambda)}(y + a; \tilde{f}') \, \diff y = \int_0^{b - a}e^{-\Phi(q)z} \tilde{f}'(z + a)\int_0^{b - a - z}e^{-\Phi(q)u} W^{(q + \lambda)}(u)\,\diff u\,\diff z.
\end{equation*}
The above equation together with $\Gamma$ as in \eqref{Eq: Gamma} yields
\begin{align*}
    \Gamma(a, b) - \frac{\lambda}{\Phi(q)} e^{-\Phi(q)(b - a)} \left(K_p - K_c\right) &= \int_0^{\infty}e^{-\Phi(q)z}\tilde{f}'(z+a)\left(1+\lambda\int_0^{b-a-z}e^{-\Phi(q)u}W^{(q+\lambda)}(u)\,\diff u\right)\,\diff z\\
    &= e^{-\Phi(q)(b-a)} \int_0^{\infty} \tilde{f}'(z + a) Z^{(q + \lambda)}(b - a - z, \Phi(q))\,\diff z,
\end{align*}
where $Z^{(q + \lambda)}(\cdot, \Phi(q))$ is defined in \eqref{Eq: Z phi}. After rearranging, we get \eqref{Eq: Gamma alt 2}. In \eqref{Eq: gamma alt 2}, the first equality follows from \eqref{Eq: gamma} and the second follows by differentiating \eqref{Eq: Gamma tilde} with respect to $b$. 

\subsection{Proof of Lemma \ref{Lemma: generator}}\label{Appendix: proof of generator}
(i): By setting $a = a^*$ and $b = b^*$ in \eqref{Eq: NPV of costs} and simplification, we obtain, for $ a^* < x < b^*$, 
\begin{multline}\label{Eq: vi misc 1}
    v_{a^*, b^*}(x) = - \left(\rho_{a^*}^{(q + \lambda)}(x; f) + \frac{\lambda}{q} \rho_{a^*}^{(q + \lambda)}(b^*; f)\right) + \frac{f(a^*)}{q + \lambda} \left(Z^{(q + \lambda)}(x - a^*) + \frac{\lambda}{q}Z^{(q + \lambda)}(b^* - a^*)\right)\\
    - \frac{\lambda}{\lambda + q} K_p (x - b^*) - \frac{K_c}{q} \psi'(0+) + \left(\frac{\lambda}{\lambda + q} K_p - K_c\right) \left(\overline{Z}^{(q + \lambda)}(x - a^*) + \frac{\lambda}{q} \overline{Z}^{(q + \lambda)}(b^* - a^*)\right).
\end{multline}
We now apply the operator $(\mathcal{L} - q)$ to \eqref{Eq: vi misc 1}. By the proof of Lemma 4.5 in \cite{egami_precautionary_2013}, we have $(\mathcal{L} - (q + \lambda)) \rho_{a^*}^{(q + \lambda)}(x; f) = f(x)$ for $a^* < x < b^*$. Therefore, 
\begin{equation}\label{Eq: vi misc 2}
    (\mathcal{L} - q) \rho_{a^*}^{(q + \lambda)}(x; f) = f(x) + \lambda\rho_{a^*}^{(q + \lambda)}(x; f).
\end{equation}
Similarly, by the proof of Theorem 2.1 in \cite{bayraktar_optimal_2014},   
\begin{equation*}
    (\mathcal{L} - (q + \lambda)) \left(\overline{Z}^{(q + \lambda)}(x - a^*) + \frac{\psi'(0+)}{q + \lambda}\right) = (\mathcal{L} - (q + \lambda)) Z^{(q + \lambda)}(x - a^*) =  0, \quad x > a^*,
\end{equation*}
from which it follows that 
\begin{equation}
    (\mathcal{L} - q) Z^{(q + \lambda)}(x - a^*) = \lambda Z^{(q + \lambda)}(x - a^*), \quad (\mathcal{L} - q) \overline{Z}^{(q + \lambda)}(x - a^*) = \lambda \overline{Z}^{(q + \lambda)}(x - a^*) + \psi'(0+) \label{Eq: vi misc 3}.
\end{equation}
Furthermore, it is known that
\begin{equation}
    (\mathcal{L} - q) (y \mapsto y)(x) = \psi'(0+) - qx, \quad x \in \mathbb{R}. \label{Eq: vi misc 5}
\end{equation}
Using \eqref{Eq: vi misc 1} and combining \eqref{Eq: vi misc 2}--\eqref{Eq: vi misc 5}, the following is immediate,
\begin{align*}
    (\mathcal{L} - q) v_{a^*, b^*}(x) + f(x) &= \lambda \left(-\rho_{a^*}^{(q + \lambda)}(x; f) + \rho_{a^*}^{(q + \lambda)}(b^*; f) \right)\\
    &+ \lambda\left(\frac{f(a^*)}{q + \lambda} \left(Z^{(q + \lambda)}(x - a^*) - Z^{(q + \lambda)}(b^* - a^*) \right) - \frac{q}{\lambda + q} K_p(b^* - x) \right) \\
    &+ \lambda\left(\frac{\lambda}{\lambda + q} K_p - K_c\right) \left(\overline{Z}^{(q + \lambda)}(x - a^*)- \overline{Z}^{(q + \lambda)}(b^* - a^*) \right).
\end{align*}
It can be verified that the above matches the expression for $-\lambda\left(v_{a^*, b^*}(b^*) - v_{a^*, b^*}(x) + K_p(b^* - x)\right)$.

(ii): For $x > b^*$, 
\begin{align}\label{Eq: vi misc 6}
\begin{split}
    v_{a^*, b^*}(x) &= -\int_{b^*}^x f(y) W^{(q)}(x - y) \, \diff y - \int_{a^*}^{b^*} f(y) \mathscr{W}^{(q,\lambda)}_{b^*}(x, y)\, \diff y - \frac{\lambda}{q} \rho_{a^*}^{(q + \lambda)}(b^*; f) Z^{(q)}(x - b^*) \\
    &+ \frac{f(a^*) - K_c \psi'(0+)}{q + \lambda} \left(\mathscr{Z}^{(q,\lambda)}_{b^*}(x, a^*) + \frac{\lambda}{q}Z^{(q)}(x - b^*)Z^{(q + \lambda)}(b^* - a^*)\right)\\
    &- \frac{\lambda}{\lambda + q} K_p \overline{Z}^{(q)}(x - b^*) + K_c \psi'(0+) \left(\overline{\mathscr{W}}^{(q,\lambda)}_{b^*}(x, a^*) + \frac{\lambda}{q} Z^{(q)}(x - b^*) \overline{W}^{(q + \lambda)}(b^* - a^*) \right)\\
    &+ \left(\frac{\lambda}{\lambda + q} K_p - K_c\right) \left(\overline{\mathscr{Z}}^{(q,\lambda)}_{b^*}(x, a^*) + \frac{\lambda}{q} Z^{(q)}(x - b^*) \overline{Z}^{(q + \lambda)}(b^* - a^*)\right).
\end{split}
\end{align}
By the proof of Lemma 4.5 in \cite{egami_precautionary_2013} and the proof of Lemma 5.1 in \cite{noba_optimal_2018}, we obtain
\begin{alignat}{2}\label{Eq: vi block 1}
    &(\mathcal{L} - q) \overline{\mathscr{W}}^{(q,\lambda)}_{b^*}(x, a^*) = 1, \quad & (\mathcal{L} - q) \mathscr{Z}^{(q,\lambda)}_{b^*}(x, a^*) = 0,\notag\\
    &(\mathcal{L} - q) \overline{\mathscr{Z}}^{(q,\lambda)}_{b^*}(x, a^*) = \psi'(0+), \quad & (\mathcal{L} - q) \left(\int_{b^*}^x f(y) W^{(q)}(x - y) \, \diff y\right) = f(x).
\end{alignat}
By the proof of Lemma 4 in \cite{avram_exit_2004}, $(\mathcal{L} - (q + \lambda)) W^{(q + \lambda)}(x - y) = 0$ for $x > y$. Using this result and dominated convergence, we obtain
\begin{equation}\label{Eq: vi misc 7}
    (\mathcal{L} - q) \left(\int_{a^*}^{b^*} f(y) W^{(q + \lambda)}(x - y)\, \diff y\right) = \lambda\int_{a^*}^{b^*} f(y) W^{(q + \lambda)}(x - y)\, \diff y. 
\end{equation}
Moreover, by Fubini's theorem and the bottom-right identity of \eqref{Eq: vi block 1}, we have
\begin{multline}\label{Eq: vi misc 8}
    (\mathcal{L} - q) \left(\int_{a^*}^{b^*} f(y) \int_{b^*}^{x} W^{(q)}(x - z)W^{(q + \lambda)}(z - y) \, \diff z\, \diff y\right) \\
    = (\mathcal{L} - q) \left(\int_{b^*}^{x} W^{(q)}(x - z) \int_{a^*}^{b^*} f(y) W^{(q + \lambda)}(z - y) \, \diff y\, \diff z\right) = \int_{a^*}^{b^*} f(y) W^{(q + \lambda)}(x - y) \, \diff y. 
\end{multline}
Combining \eqref{Eq: vi misc 7} and \eqref{Eq: vi misc 8}, we obtain $(\mathcal{L} - q) \left(\int_{a^*}^{b^*} f(y) \mathscr{W}^{(q,\lambda)}_{b^*}(x, y)\, \diff y\right) = 0$.

The expression for $v_{a^*, b^*}$ as given in \eqref{Eq: vi misc 6} together with the computations above yields $(\mathcal{L} - q) v_{a^*, b^*}(x) = - f(x)$ for $x > b^*$, which implies $(\mathcal{L} - q) v_{a^*, b^*}(x) + f(x) = 0$ for $x > b^*$. To show that this statement remains valid at $x = b^*$, it suffices to observe that as $v_{a^*, b^*}$ is sufficiently smooth, $x \mapsto (\mathcal{L} - q) v_{a^*, b^*}(x)$ is a continuous function (see \eqref{Eq: generator}). Since $f$ is also continuous, we further conclude that $(\mathcal{L} - q) v_{a^*, b^*} + f$ is continuous, and 
\begin{equation*}
     (\mathcal{L} - q) v_{a^*, b^*}(b^*) + f(b^*)=(\mathcal{L} - q) v_{a^*, b^*}(b^*+) + f(b^*+)  =  0. 
\end{equation*}
(iii): For $x \leq a^*$,
\begin{multline}\label{Eq: vi misc 9}
    v_{a^*, b^*}(x) = - \frac{\lambda}{q} \rho_{a^*}^{(q + \lambda)}(b^*; f) + \frac{f(a^*)}{q + \lambda} \left(1 + \frac{\lambda}{q} Z^{(q + \lambda)}(b^* - a^*)\right) - \frac{\lambda}{\lambda + q} K_p (x - b^*)\\
    - \frac{K_c}{q} \psi'(0+) + \left(\frac{\lambda}{\lambda + q} K_p - K_c\right) \left(x - a^* + \frac{\lambda}{q} \overline{Z}^{(q + \lambda)}(b^* - a^*)\right).
\end{multline}
Applying $(\mathcal{L} - q)$ to \eqref{Eq: vi misc 9} and simplifying, we obtain
\begin{multline*}
    (\mathcal{L} - q) v_{a^*, b^*}(x) = \lambda \rho_{a^*}^{(q + \lambda)}(b^*; f) - f(a^*) \left(1 + \lambda \overline{W}^{(q + \lambda)}(b^* - a^*)\right) - \lambda \left(\frac{\lambda}{\lambda + q} K_p - K_c\right) \overline{Z}^{(q + \lambda)}(b^* - a^*)\\
    + \frac{\lambda}{\lambda + q} K_p (qx - qb^*) - \left(\frac{\lambda}{\lambda + q} K_p - K_c\right) \left(qx - qa^*\right) .
\end{multline*}
On the other hand, by Corollary \ref{Corollary: generator}, we have
\begin{multline*}
    \mathcal{M}v_{a^*, b^*}(x) - v_{a^*, b^*}(x) 
    = - \rho_{a^*}^{(q + \lambda)}(b^*; f) + f(a^*) \overline{W}^{(q + \lambda)}(b^* - a^*) + \left(\frac{\lambda}{\lambda + q} K_p - K_c\right) \overline{Z}^{(q + \lambda)}(b^* - a^*)\\
    - \frac{q}{\lambda + q} K_p (x - b^*) - \left(\frac{\lambda}{\lambda + q} K_p - K_c\right) \left(x - a^*\right).
\end{multline*}
Thus, for $x \leq a^*$, 
\begin{equation*}
    (\mathcal{L}-q)v_{a^*, b^*}(x) + \lambda(\mathcal{M}v_{a^*, b^*}(x) - v_{a^*, b^*}(x)) + f(x) = \tilde{f}(x) - \tilde{f}(a^*).
\end{equation*}
Recalling Assumption \ref{asm: f slope}(1) and noting that $a^* < \bar{a}$, we have $\tilde{f}(x) - \tilde{f}(a^*) \geq 0$ for $x \leq a^*$. 

\subsection{Proof of Lemma \ref{Lemma: verification limit}.}\label{Appendix: proof of verification limit}
By It\^o's formula, for $n > 0$ and $t > 0$,
    \begin{align*}
    v_{a^*, b^*}(x) &= -\mathbb{E}_x\left[\int^{t \wedge \tau_n}_0 e^{-qs} (\mathcal{L}-q) v_{a^*, b^*}(X(s))\, \diff s \right] + \mathbb{E}_x\left[e^{-q(t\wedge \tau_n)} v_{a^*, b^*}(X(t\wedge \tau_n))\right]\\
    &\leq \mathbb{E}_x\left[\int^{t \wedge \tau_n}_0 e^{-qs} f(X(s))\, \diff s \right] + \mathbb{E}_x\left[e^{-q(t\wedge \tau_n)} v_{a^*, b^*}(X(t\wedge \tau_n))\right],
\end{align*}
where $\tau_n \coloneqq \inf\{t \geq 0: |X(t)| > n\}$ and the inequality follows from Corollary \ref{Corollary: generator}, Lemma \ref{Lemma: generator}, and the non-positivity of $\mathcal{M}v_{a^*, b^*}-v_{a^*, b^*}$. Then, by Lemma \ref{Lemma: polynomial growth} and Assumption \ref{asm: on X}, the argument of Lemma 11 in \cite{yamazaki_inventory_2017} applies and yields $\mathbb{E}_x\left[e^{-q(t\wedge \tau_n)} v_{a^*, b^*}(X(t\wedge \tau_n))\right] \to 0$ as $n, t \uparrow \infty$, and therefore $v_{a^*, b^*}(x) \leq \mathbb{E}_x\left[\int^\infty_0 e^{-qs} f(X(s))\, \diff s\right]$ for $x \in \mathbb{R}$. Thus, it follows that
    \begin{align}\label{Eq: upper bound}
    \begin{split}
        \mathbb{E}_x\left[e^{-q(t\wedge \tau_n)} v_{a^*, b^*}(Y^\pi(t\wedge \tau_n))\right] &\leq \mathbb{E}_x\left[ e^{-q(t \wedge \tau_n)} \mathbb{E}_{Y^\pi(t \wedge \tau_n)} \left[\int^\infty_{0}e^{-qs}f(X(s))\, \diff s\right]\right]\\
        &= \mathbb{E}_x\left[ \int^\infty_{t \wedge \tau_n}e^{-qs}f(X(s) + R^\pi_c(t \wedge \tau_n) + R^\pi_p(t \wedge \tau_n))\, \diff s\right],
        \end{split}
    \end{align}
    where the equality is due to the strong Markov property and the spatial homogeneity of $X$. As both $R_c^\pi$ and $R_p^\pi$ are non-negative, for $s \geq t \wedge \tau_n$, $X(s) \leq X(s) + R_c^\pi(t \wedge \tau_n) + R_p^\pi(t \wedge \tau_n) \leq Y^\pi(s)$. This, together with the convexity of the function $f$, gives the following bound,
    \begin{multline*}
        \mathbb{E}_x\left[\int^\infty_{t \wedge \tau_n} e^{-qs} f(X(s) + R_c^\pi(t \wedge \tau_n) + R_p^\pi(t \wedge \tau_n))\, \diff s\right] \\
        \leq \max \left(\mathbb{E}_x\left[\int^\infty_{t \wedge \tau_n} e^{-qs} f(X(s))\, \diff s\right] , \mathbb{E}_x\left[\int^\infty_{t \wedge \tau_n} e^{-qs} f(Y^\pi(s))\, \diff s\right]\right),
    \end{multline*}
    which vanishes as $n, t \uparrow \infty$ by Assumption \ref{asm: on X} and the assumption that $\pi$ is admissible. Hence, the limit supremum of \eqref{Eq: upper bound} is zero. 
\bibliographystyle{abbrv}
\bibliography{main}
\end{document}